\documentclass[11pt]{amsart}
\usepackage{fullpage,url,amssymb}
\usepackage{mathrsfs}
\numberwithin{equation}{section}
\usepackage{color}

\newcommand{\defi}[1]{\textsf{#1}} % for defined terms

\newenvironment{romanenum}{\hfill \begin{enumerate} }{\end{enumerate}}
\newenvironment{alphenum}{\hfill \begin{enumerate} }{\end{enumerate}}

% Characters
\newcommand{\FF}{{\mathbb F}}
\newcommand{\GG}{{\mathbb G}}
\newcommand{\ZZ}{{\mathbb Z}}

\def\bbar#1{\setbox0=\hbox{$#1$}\dimen0=.2\ht0 \kern\dimen0 \overline{\kern-\dimen0 #1}}

\newcommand{\Lbar}{{\bbar{L}} }

\newcommand{\kbar}{{\bbar{k}}}

\newcommand{\Fbar}{\bbar{F}}

\newcommand{\p}{{\mathfrak p}}
\newcommand{\aA}{{\mathfrak a}}

% mathcal characters

\newcommand{\calB}{{\mathcal B}}
\newcommand{\calC}{{\mathcal C}}

\newcommand{\calS}{{\mathcal S}}

\newcommand{\calV}{{\mathcal V}}

\newcommand{\OO}{{\mathcal O}}

% Math operators
\DeclareMathOperator{\tr}{tr}

\DeclareMathOperator{\Tr}{Tr}

\DeclareMathOperator{\Frob}{Frob}

\DeclareMathOperator{\End}{End}

\DeclareMathOperator{\Hom}{Hom}

\DeclareMathOperator{\Aut}{Aut}
\DeclareMathOperator{\Gal}{Gal}

\DeclareMathOperator{\ord}{ord}

\newcommand{\laurent}{(\!(t^{-1})\!) }

% Text subscripts, superscripts
\newcommand{\perf}{{\operatorname{perf}}}

\newcommand{\ab}{{\operatorname{ab}}}
\newcommand{\sep}{{\operatorname{sep}}}

\newcommand{\lin}{{\operatorname{lin}}}

\newcommand{\GL}{\operatorname{GL}}
\newcommand{\SL}{\operatorname{SL}}
\newcommand{\PGL}{\operatorname{PGL}}
\newcommand{\PSL}{\operatorname{PSL}}

\newcommand{\del}{\partial}
 % intersection of a collection
 % binary intersection
 % union of a collection
 % binary union

 % binary direct sum
 % direct sum of a collection

\newcommand{\Cent}{\operatorname{Cent}}

\DeclareMathOperator{\Ad}{Ad}

\newcommand{\ang}[1]{\langle #1\rangle}

\def\QQ{\mathbb Q}
\def\RR{\mathbb R}
\def\CC{\mathbb C}

\def\bfA{\mathbf A}

\def\twistedseries{[\![\tau^{-1}]\!] }

\def\twistedLaurent{ (\!(\tau^{-1})\!) }

\newtheorem{theorem}{Theorem}[section]
\newtheorem{lemma}[theorem]{Lemma}
\newtheorem{corollary}[theorem]{Corollary}
\newtheorem{proposition}[theorem]{Proposition}
\theoremstyle{definition}
\newtheorem{definition}[theorem]{Definition}

\newtheorem{example}[theorem]{Example}

\theoremstyle{remark}
\newtheorem{remark}[theorem]{Remark}

\definecolor{webcolor}{rgb}{0,0,1}
\definecolor{webbrown}{rgb}{.6,0,0}
\usepackage[
       	colorlinks,
       	linkcolor=webbrown,  filecolor=webcolor,  citecolor=webbrown, 
       	backref,
       	backref,
       	pdfauthor={David Zywina},
	pdftitle={The Sato-Tate Law for Drinfeld Modules},
]{hyperref}
\usepackage[alphabetic,backrefs,lite]{amsrefs} % for bibliography

\begin{document}
\title[The Sato-Tate Law for Drinfeld Modules]{The Sato-Tate Law for Drinfeld Modules}
\subjclass[2000]{Primary 11G09; Secondary 11F80, 11R58}
%11G09 Drinfel'd modules; higher-dimensional motives
%11F80 Galois representations 
%11R58 Arithmetic theory of algebraic function Þelds
%\keywords{Drinfeld modules, torsion points, Galois representations}
\author{David Zywina}
\address{Department of Mathematics and Statistics, Queen's University, Kingston, ON  K7L~3N6, Canada}
\email{zywina@mast.queensu.ca}
\urladdr{http://www.mast.queensu.ca/\~{}zywina}
\date{\today}

\begin{abstract} 

We prove an analogue of the Sato-Tate conjecture for Drinfeld modules.   Using ideas of Drinfeld, J.-K.~Yu showed that Drinfeld modules satisfy some Sato-Tate law, but did not describe the actual law.   More precisely, for a Drinfeld module $\phi$ defined over a field $L$, he constructs a continuous representation $\rho_\infty\colon W_L \to D^\times$ of the Weil group of $L$ into a certain division algebra, which encodes the Sato-Tate law.    When $\phi$ has generic characteristic and $L$ is finitely generated, we shall describe the image of $\rho_\infty$ up to commensurability.    As an application, we give improved upper bounds for the Drinfeld module analogue of the Lang-Trotter conjecture.
\end{abstract}

\maketitle
\section{Introduction}

\subsection{Notation} \label{SS:notation}
We first set some notation that will hold throughout.  Let $F$ be a global function field.   Let $k$ be its field of constants and denote by $q$ the cardinality of $k$.  Fix a place $\infty$ of $F$ and let $A$ be the subring consisting of those functions that are regular away from $\infty$.    For each place $\lambda$ of $F$, let $F_\lambda$ be the completion of $F$ at $\lambda$.   Let $\ord_\lambda$ denote the corresponding discrete valuation on $F_\lambda$, $\OO_\lambda$ the valuation ring, and $\FF_\lambda$ the residue field.   Let $d_\infty$ be the degree of the extension $\FF_\infty/k$.\\

For a field extension $L$ of $k$, let $\Lbar$ be a fixed algebraic closure and let $L^\sep$ be the separable closure of $L$ in $\Lbar$.  We will denote the algebraic closure of $k$ in $\Lbar$ by $\kbar$.    
Let $\Gal_L=\Gal(L^\sep/L)$ be the absolute Galois group of $L$.   The \defi{Weil group} $W_L$ is the subgroup of $\Gal_L$ consisting of those $\sigma$ for which $\sigma|_{\kbar}$ is an integral power  $\deg(\sigma)$ of the Frobenius automorphism $x\mapsto x^q$.    The map $\deg\colon W_L \to \ZZ$ is a group homomorphism.  Denote by $L^\perf$ the perfect closure of $L$ in $\Lbar$.\\

Let $L[\tau]$ be the twisted polynomial ring with the commutation rule $\tau \cdot a = a^q \tau$ for $a\in L$; in particular, $L[\tau]$ is non-commutative if $L\neq k$. Identifying $\tau$ with $X^q$, we find that $L[\tau]$ is the ring of $k$-linear additive polynomials in $L[X]$ where multiplication corresponds to composition of polynomials.   Suppose further that $L$ is perfect.   Let $L\twistedLaurent$ be the skew-field consisting of twisted Laurent series in $\tau^{-1}$   (we need $L$ to be perfect so that $\tau^{-1}\cdot  a = a^{1/q} \tau$ holds).    Define the valuation $\ord_{\tau^{-1}}\colon L\twistedLaurent\to \ZZ\cup\{+\infty\}$ by $\ord_{\tau^{-1}}(\sum_i a_i \tau^{-i}) = \inf\{ i : a_i \neq 0 \}$ and $\ord_{\tau^{-1}}(0)=+\infty$.   The valuation ring of $\ord_{\tau^{-1}}$ is $L\twistedseries$, i.e., the ring of twisted formal power series in $\tau^{-1}$.

For a ring $R$ and a subset $S$, let $\Cent_R(S)$ be the subring of $R$ consisting of those elements that commute with $S$.

\subsection{Drinfeld module background and the Sato-Tate law}  \label{SS:Sato-Tate law}
A \defi{Drinfeld module} over a field $L$ is a ring homomorphism 
\[
\phi\colon A \to L[\tau], \, a\mapsto \phi_a
\] 
such that $\phi(A)$ is not contained in the subring of constant polynomials.   Let $\del \colon L[\tau] \to L$ be the ring homomorphism $\sum_{i}b_i \tau^i \mapsto b_0$.  The \defi{characteristic} of $\phi$ is the kernel of $\del\circ \phi \colon A \to L$; it is a prime ideal of $A$.  If the characteristic of $\phi$ is the zero ideal,  then we say that $\phi$ has \defi{generic characteristic}.    Using $\del\circ\phi$, we shall view $L$ as an extension of $k$, and as an extension of $F$ when $\phi$ has generic characteristic.  

The ring $L[\tau]$ is contained in the skew field $L^\perf\twistedLaurent$.  The map $\phi$ is injective, so it extends uniquely to a homomorphism $\phi\colon F\hookrightarrow L^\perf\twistedLaurent$.   The function $v\colon F\to \ZZ\cup\{+\infty\}$ defined by $v(x)=\ord_{\tau^{-1}}(\phi_x)$ is a non-trivial discrete valuation that satisfies $v(x)\leq 0$ for all non-zero $x\in A$.  Therefore $v$ is equivalent to $\ord_\infty$, and hence there exists a positive $n\in\QQ$ that satisfies
\begin{equation} \label{E:degrees}
\ord_{\tau^{-1}}(\phi_x) = n d_\infty \ord_\infty(x) 
\end{equation}
for all $x\in F^\times$.   The number $n$ is called the \defi{rank} of $\phi$ and it is always an integer.  Since $L^\perf\twistedLaurent$ is complete with respect to $\ord_{\tau^{-1}}$, the map $\phi$ extends uniquely to a homomorphism
\[
\phi\colon F_\infty\hookrightarrow L^\perf\twistedLaurent
\]
that satisfies (\ref{E:degrees}) for all $x\in F_\infty^\times$.    This is the starting point for the constructions of Drinfeld in \cite{MR0439758}.  Let $\FF_\infty\to L^\perf$ be the homomorphism obtained by composing $\phi|_{\FF_\infty}$ with the map that takes an element in $L^\perf\twistedseries$ to its constant term.   So $\phi$ induces an embedding of $\FF_\infty$ into $L^\perf$, and hence into $L$ itself.\\

Let $D_\phi$ be the centralizer of $\phi(A)$, equivalently of $\phi(F_\infty)$, in $\Lbar\twistedLaurent$.  The ring $D_\phi$ is an $F_\infty$-algebra via our extended $\phi$.  We shall see in \S\ref{S:construction} that $D_\phi$ is a central $F_\infty$-division algebra with invariant $-1/n$.  For each field extension $L'/L$, the ring $\End_{L'}(\phi)$ of \defi{endomorphisms} of $\phi$ is the centralizer of $\phi(A)$ in $L'[\tau]$.   We have inclusions $\phi(A)\subseteq \End_{\Lbar}(\phi) \subseteq D_\phi$.

Following J.-K.~Yu \cite{MR2018826}, we shall define a continuous homomorphism
\[
\rho_\infty \colon W_L \to D_\phi^\times
\]
that, as we will explain, should be thought of as the Sato-Tate law for $\phi$.   Let us briefly describe the construction, see \S\ref{S:construction} for details.  There exists an element $u\in \Lbar\twistedLaurent^\times$ with coefficients in $L^\sep$ such that $u^{-1}\phi(A)u \subseteq \kbar\twistedLaurent$.  
For $\sigma\in W_L$, we define
\[
\rho_\infty(\sigma):=\sigma(u) \tau^{\deg(\sigma)} u^{-1}
\]
where $\sigma$ acts on the series $u$ by acting on its coefficients.  We will verify in \S\ref{S:construction} that $\rho_{\infty}(\sigma)$ belongs to $D_\phi^\times$, is independent of the initial choice of $u$, and that $\rho_\infty$ is indeed a continuous homomorphism.
Our construction of $\rho_\infty$ varies slightly from than that of Yu's (cf.~\S\ref{SS:difference in construction}); his representation $\rho_\infty$ is only canonically defined up to an inner automorphism.   When needed, we will make the dependence on the Drinfeld module clear by using the notation $\rho_{\phi,\infty}$ instead of  $\rho_{\infty}$.\\

Now consider a Drinfeld module $\phi\colon A \to L[\tau]$ of rank $n$ with generic characteristic and assume that $L$ is a finitely generated field.   Choose an integral scheme $X$ of finite type over $k$ with function field $L$.  For a closed point $x$ of $X$, denote its residue field by $\FF_x$.   Using that $A$ is finitely generated, we may replace $X$ with an open subscheme such that the coefficients of all elements of $\phi(A)\subseteq L[\tau]$ are integral at each closed point $x$ of $X$.  By reducing the coefficients of $\phi$, we obtain a homomorphism
\[
\phi_x \colon A \to \FF_x[\tau].
\]
After replacing $X$ by an open subscheme, we may assume further that $\phi_x$ is a Drinfeld module of rank $n$ for each closed point $x$ of $X$.  

Let $P_{\phi,x}(T)\in A[T]$ be the \defi{characteristic polynomial} of the Frobenius endomorphism $\pi_x:=\tau^{[\FF_x:k]} \in \End_{\FF_x}(\phi_x)$; it is the degree $n$ polynomial that is a power of the minimal polynomial of $\pi_x$ over $F$.    We shall see that $\rho_\infty$ is unramified at $x$ and that
\[
P_{\phi,x}(T)=\det(TI - \rho_\infty(\Frob_x) )
\]
where we denote by $\det\colon D_\phi \to F_\infty$ the reduced norm.   The representation $\rho_{\infty}$ can thus be used to study the distribution of the coefficients of the polynomials $P_{\phi,x}(T)$ with respect to the $\infty$-adic topology.    Though Yu showed that $\phi$ satisfies an analogue of Sato-Tate, he was unable to say what the Sato-Tate law actually was.   We shall address this by describing the image of $\rho_\infty$ up to commensurability.   We first consider the case where $\phi$ has no extra endomorphisms.

\begin{theorem}\label{T:main theorem non-CM}
Let $\phi\colon A\to L[\tau]$ be a Drinfeld module with generic characteristic where $L$ is a finitely generated field and assume that $\End_{\Lbar}(\phi)=\phi(A)$.   The group $\rho_\infty(W_L)$ is an open subgroup of finite index in $D_\phi^\times$.
\end{theorem}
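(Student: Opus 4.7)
My plan breaks into three parts: Zariski density of $\rho_\infty(W_L)$ in the algebraic group underlying $D_\phi^\times$, upgrading to openness in the analytic topology, and controlling the image modulo a maximal compact subgroup. Throughout, view $D_\phi^\times$ as the group of $F_\infty$-points of a reductive algebraic group $G$ (an inner form of $\GL_{n,F_\infty}$) of dimension $n^2$.

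For Zariski density, I would compute the centralizer $C$ of $\rho_\infty(W_L)$ in $D_\phi$ and show it collapses to the center $F_\infty$. Unwinding $\rho_\infty(\sigma) = \sigma(u)\tau^{\deg \sigma} u^{-1}$, an element of $C$ conjugated by $u^{-1}$ lies in $\kbar\twistedLaurent$, commutes with $u^{-1}\phi(A)u$, and is compatible with the twisted Weil action, and a direct manipulation identifies it with an element of $\End_{L^\sep}(\phi) \otimes_A F_\infty$. Under the hypothesis $\End_{\Lbar}(\phi) = \phi(A)$, this endomorphism ring collapses, so $C = F_\infty$. The double centralizer theorem for the central simple $F_\infty$-algebra $D_\phi$ then implies that the $F_\infty$-subalgebra generated by $\rho_\infty(W_L)$ is all of $D_\phi$, giving Zariski density of $\rho_\infty(W_L)$ in $G$.

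For openness, the closure $H := \overline{\rho_\infty(W_L)}$ is an $F_\infty$-analytic subgroup of $D_\phi^\times$ with Lie algebra $\mathfrak h \subseteq D_\phi$, which is $\Ad(H)$-stable and hence, by Zariski density, $\Ad(G(F_\infty))$-stable. Thus $\mathfrak h$ is a two-sided ideal of the simple algebra $D_\phi$, so $\mathfrak h = 0$ or $\mathfrak h = D_\phi$. To rule out $\mathfrak h = 0$: discreteness of $H$ would force $\rho_\infty(\ker(\deg))$ (the image of the geometric Weil subgroup) to be finite, and after passing to a finite base change $L'/L$, to be trivial; then $\rho_\infty|_{W_{L'}}$ would factor through $\deg$ and have cyclic image. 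But cyclic subgroups of $G$ have Zariski closure of dimension $\le n < n^2$, contradicting Zariski density (which is preserved under finite base change). Hence $\mathfrak h = D_\phi$ and $H$ is open.

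Finally, for finite index, the value group of $\ord_{\tau^{-1}}$ on $D_\phi^\times$ equals $d_\infty\ZZ$ (since $D_\phi$ has invariant $-1/n$ and $\ord_{\tau^{-1}}|_{\phi(F_\infty^\times)} = nd_\infty \ord_\infty$). Normalizing $u$ to have $\ord_{\tau^{-1}}(u) = 0$ gives $\ord_{\tau^{-1}}(\rho_\infty(\sigma)) = -\deg(\sigma)$; since $\phi$ embeds $\FF_\infty$ into $L$, the constant field $k_L$ contains $\FF_\infty$, so $d_\infty \mid [k_L:k]$ and $\deg(W_L) = [k_L:k]\ZZ$ has finite index in $d_\infty\ZZ$. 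Combined with openness (which gives finite index inside the compact unit group $\OO_{D_\phi}^\times$), this yields finite index of $\rho_\infty(W_L)$ in $D_\phi^\times$. The main obstacle in the whole argument is the centralizer computation $C = \End_{L^\sep}(\phi) \otimes_A F_\infty$ from the first step: this is a Tate-type statement for Yu's $\infty$-adic representation, requiring careful manipulation of twisted Laurent series and the global element $u$, whereas the other two steps follow formally once it is in hand.
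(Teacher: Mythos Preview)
Your three-step outline matches the paper's, and you correctly single out the $\infty$-adic Tate statement as the crucial input. But two of your steps have genuine gaps.

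In step one, the double centralizer argument gives you that the $F_\infty$-span of $\rho_\infty(W_L)$ is all of $D_\phi$; this shows the action on $K^n$ is absolutely irreducible, but it does \emph{not} give Zariski density in $\GL_D$. For a concrete obstruction, the normalizer in $D_\phi^\times$ of a maximal subfield $E\subseteq D_\phi$ already spans $D_\phi$ linearly, yet its Zariski closure has dimension $n$, not $n^2$. The paper closes this gap with an extra arithmetic input: a criterion of Pink says that a connected reductive subgroup of $\GL_{n,K}$ acting irreducibly and admitting a cocharacter with weights $(1,0,\dots,0)$ must be all of $\GL_{n,K}$, and such a cocharacter is extracted from the Frobenius torus at a place of \emph{ordinary reduction} (whose existence is another result of Pink).

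Step two is the more serious problem. Your argument---$\mathfrak h$ is $\Ad$-stable, hence an ideal, hence $0$ or $D_\phi$, and $\mathfrak h=D_\phi$ forces $H$ open---is a characteristic-zero argument that breaks down over a local field of characteristic $p$. There is no exponential map, closed subgroups need not be Lie subgroups in any useful sense, and a Zariski-dense compact subgroup can fail to be open: the model example is $\GL_n\bigl(\FF_q(\!(t^p)\!)\bigr)$ inside $\GL_n\bigl(\FF_q(\!(t)\!)\bigr)$. The paper explicitly flags this and replaces your Lie-algebra step with Pink's structure theorem for Zariski-dense compact subgroups of semisimple groups over local function fields: such a subgroup comes, via an isogeny, from an open subgroup over some closed subfield $E\subseteq F_\infty$. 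Ruling out $E\subsetneq F_\infty$ then requires a further arithmetic computation, namely that the values $\tr\bigl(\Ad_{\PGL_D}(\rho_\infty(\Frob_x))\bigr)$ generate $F$. Even then one only obtains an open subgroup of $\SL_D(F_\infty)$ inside $\rho_\infty(W_L)$, and the determinant direction is handled separately via the rank~$1$ theorem applied to the determinant Drinfeld module.
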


We will explain the corresponding equidistribution result in \S\ref{SS:equidistribution} after a brief interlude on elliptic curves in \S\ref{SS:elliptic curves}.    

Now consider a general Drinfeld module $\phi\colon A\to L[\tau]$ with generic characteristic, $L$ finitely generated, and no restriction on the endomorphism ring of $\phi$.   The reader may safely read ahead under the assumption that $\End_{\Lbar}(\phi)=\phi(A)$ (indeed, a key step in the proof is to reduce to the case where $\phi$ has no extra endomorphisms).

The ring $\End_\Lbar(\phi)$ is commutative and a projective module over $A$ with rank $m\leq n$, cf.~\cite{MR0384707}*{p.569~Corollary}.  Also, $E_\infty:= \End_{\Lbar}(\phi)\otimes_A F_\infty$ is a field of degree $m$ over $F_\infty$.   Let $B_\phi$ be the centralizer of $\End_{\Lbar}(\phi)$, equivalently of $E_\infty$, in $\Lbar\twistedLaurent$; it is central $E_\infty$-division algebra with invariant $-m/n$.

There is a finite separable extension $L'$ of $L$ for which $\End_{\Lbar}(\phi)=\End_{L'}(\phi)$.  We shall see that $\rho_\infty(W_{L'})$ commutes with $\End_{L'}(\phi)$, and hence $\rho_{\infty}(W_{L'})$ is a subgroup of $B_\phi^\times$.   The following generalization of Theorem~\ref{T:main theorem non-CM} says that after this constraint it taken into account, the image of $\rho_\infty$ is, up to finite index, as large as possible.

\begin{theorem}\label{T:main theorem}
Let $\phi\colon A\to L[\tau]$ be a Drinfeld module with generic characteristic where $L$ is a finitely generated field.   The group $\rho_\infty(W_L)\cap B_\phi^\times$ is an open subgroup of finite index in $B_\phi^\times$.  Moreover, the groups $\rho_\infty(W_L)$ and $B_\phi^\times$ are commensurable.
\end{theorem}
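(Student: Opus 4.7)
The plan is to reduce Theorem \ref{T:main theorem} to Theorem \ref{T:main theorem non-CM} by viewing $\phi$ as a Drinfeld module over a coefficient ring larger than $A$. Choose a finite separable extension $L'/L$ over which all endomorphisms of $\phi$ are defined, so $\End_{L'}(\phi) = \End_{\Lbar}(\phi)$; then $W_{L'}$ has finite index in $W_L$ and $\rho_\infty(W_{L'}) \subseteq B_\phi^\times$. We have the chain $\rho_\infty(W_{L'}) \subseteq \rho_\infty(W_L) \cap B_\phi^\times \subseteq \rho_\infty(W_L)$ with $[\rho_\infty(W_L) : \rho_\infty(W_{L'})] \leq [W_L:W_{L'}] < \infty$. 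If we prove that $\rho_\infty(W_{L'})$ is open of finite index in $B_\phi^\times$, then $\rho_\infty(W_L) \cap B_\phi^\times$ is sandwiched and hence itself open of finite index in $B_\phi^\times$, and commensurability of $\rho_\infty(W_L)$ with $B_\phi^\times$ follows because both contain $\rho_\infty(W_L) \cap B_\phi^\times$ with finite index. Thus both conclusions of the theorem reduce to the openness statement for $\rho_\infty(W_{L'})$.

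Let $A'$ denote the integral closure of $A$ in $E$; it is a Dedekind ring, namely the coordinate ring of the smooth projective model of $E$ minus the unique place $\infty'$ above $\infty$ (uniqueness coming from $E_\infty$ being a field). The order $\End_{L'}(\phi) \subseteq A'$ has a conductor ideal $\mathfrak{f} \subseteq A'$. By standard Drinfeld-module isogeny theory (quotienting $\phi$ by an appropriate sub-$\End_{L'}(\phi)$-module of $\phi[\mathfrak{f}]$) one produces an isogeny $u\colon \phi \to \tilde\phi$ with $\End_{\Lbar}(\tilde\phi) = A'$; after enlarging $L'$ by a finite separable extension, we may assume that $u \in L'[\tau]$ and $\End_{L'}(\tilde\phi) = A'$. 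Since $\tilde\phi_a = u\phi_a u^{-1}$ in $\Lbar\twistedLaurent$, conjugation by $u$ identifies $D_\phi$ with $D_{\tilde\phi}$ and $B_\phi$ with $B_{\tilde\phi}$, and direct computation from the construction in \S\ref{S:construction}, using $\sigma(u) = u$ for $\sigma \in W_{L'}$, gives $\rho_{\tilde\phi,\infty}(\sigma) = u\,\rho_{\phi,\infty}(\sigma)\,u^{-1}$. Hence the desired openness for $\phi$ is equivalent to that for $\tilde\phi$, and we may assume from now on that $\End_{L'}(\phi) = A'$.

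Let $\phi'\colon A' \hookrightarrow L'[\tau]$ be the tautological inclusion coming from $A' = \End_{L'}(\phi) \subseteq L'[\tau]$. Comparing (\ref{E:degrees}) for $\phi$ and $\phi'$ via $[E_\infty:F_\infty] = e_{\infty'}f_{\infty'} = m$ shows that $\phi'$ is a Drinfeld $A'$-module of rank $n/m$ with respect to $\infty'$, generic characteristic, over the finitely generated field $L'$. Any element of $\Lbar[\tau]$ commuting with $\phi'(A') = A'$ also commutes with $\phi(A)$, so $\End_{\Lbar}(\phi') \subseteq \End_{\Lbar}(\phi) = A'$, and the reverse inclusion is trivial; hence $\phi'$ has no extra endomorphisms. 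Running the construction in \S\ref{S:construction} for $\phi'$ yields an element $u_0$ with coefficients in $L^\sep$ such that $u_0^{-1} \phi'(A') u_0 \subseteq \kbar\twistedLaurent$. Since $\phi(A) \subseteq \phi'(A')$, the same $u_0$ trivializes $\phi(A)$, and the common formula $\sigma \mapsto \sigma(u_0)\tau^{\deg\sigma}u_0^{-1}$ then shows $\rho_{\phi',\infty'}|_{W_{L'}} = \rho_{\phi,\infty}|_{W_{L'}}$. Similarly $D_{\phi'} = \Cent_{\Lbar\twistedLaurent}(A') = B_\phi$. Applying Theorem \ref{T:main theorem non-CM} to $\phi'$ gives that $\rho_\infty(W_{L'}) = \rho_{\phi',\infty'}(W_{L'})$ is open of finite index in $D_{\phi'}^\times = B_\phi^\times$, which by the first paragraph completes the proof.

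The main obstacle is the isogeny step: producing $\tilde\phi$ isogenous to $\phi$ with endomorphism ring equal to the full maximal order $A'$, via an isogeny defined over a finite separable extension of $L'$. This is analogous to the CM-elliptic-curve saturation that replaces an order by its maximal order through a conductor-torsion isogeny, but requires verification that the quotient procedure remains within the category of Drinfeld $A$-modules of rank $n$ and generic characteristic, and that the resulting endomorphism ring is exactly $A'$ rather than an intermediate order. The remainder of the argument is essentially bookkeeping with centralizers and the explicit formula for $\rho_\infty$.
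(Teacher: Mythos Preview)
Your proposal is correct and follows essentially the same route as the paper: pass to a finite extension where all endomorphisms are defined, replace $\phi$ by an isogenous Drinfeld module whose endomorphism ring is the maximal order $A'$ in $E$, reinterpret the result as a Drinfeld $A'$-module $\phi'$ of rank $n/m$ with $\End_{\Lbar}(\phi')=\phi'(A')$, identify $D_{\phi'}$ with $B_\phi$ and the two $\rho_\infty$'s via the same trivializing $u$, and apply Theorem~\ref{T:main theorem non-CM}. The paper's presentation differs only cosmetically: rather than first isogenizing $\phi$ as an $A$-module and then promoting it to an $A'$-module, the paper writes the inclusion $\phi'\colon A'=\End_L(\phi)\hookrightarrow L[\tau]$ directly and then invokes Hayes \cite{MR535766}*{Prop.~3.2} to obtain an isogeny $f$ from $\phi'$ to a genuine Drinfeld module $\psi\colon B\to L[\tau]$ over the normalization $B$ of $A'$, together with the assertions $\operatorname{rank}\psi=n/[F':F]$ and $\End_{\Lbar}(\psi)=\psi(B)$. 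This is exactly the ``main obstacle'' you flagged; the paper discharges it by citation rather than by the conductor-quotient argument you sketch, but the content is the same.
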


These theorems address several of the questions raised by J.K.~Yu in \cite{MR2018826}*{\S4}.

\subsection{Elliptic curves} \label{SS:elliptic curves}

We now recall the Sato-Tate conjecture for elliptic curves over a number field.   We shall present it in a manner so that the analogy with Drinfeld modules is transparent; in particular, this strengthens the analogy presented in \cite{MR2018826}.  

Let $\mathbf{H}$ be the real quaternions; it is a central $\RR$-division algebra with invariant $-1/2$.   We will denote the reduced norm by $\det\colon \mathbf{H}\to \RR$.  Let $\mathbf{H}_1$ be the group of quaternions of norm 1.   

For a group $H$, we shall denote the set of conjugacy classes by $H^\sharp$.  Now suppose that $H$ is a compact topological group and let $\mu$ be the Haar measure on $H$ normalized so that $\mu(H)=1$.  Using the natural map $f\colon H\to H^\sharp$, we give $H^\sharp$ the quotient topology.  The \defi{Sato-Tate measure} on $H^\sharp$ is the measure $\mu_{\textrm{ST}}$ for which $\mu_{\textrm{ST}}(U)=\mu(f^{-1}(U))$ for all open subsets $U\supseteq H^\sharp$.\\

Fix an elliptic curve $E$ defined over a number field $L$, and let $S$ be the set of non-zero prime ideals of $\OO_L$ for which $E$ has bad reduction.   For each non-zero prime ideal $\p\notin S$ of $\OO_L$, let $E_\p$ be the elliptic curve over $\FF_\p=\OO_L/\p$ obtained by reducing $E$ modulo $\p$, and let $\pi_\p$ be the Frobenius endomorphism of $E_\p/\FF_\p$.    The \defi{characteristic polynomial} of $\pi_\p$ is the polynomial $P_{E,\p}(T)\in \QQ[T]$ of degree 2 that is a power of the minimal polynomial of $\pi_\p$ over $\QQ$.   We have $P_{E,\p}(T)=T^2-a_\p(E)T + N(\p)$ where $N(\p)$ is the cardinality of $\FF_\p$ and $a_\p(E)$ is an integer that satisfies $|a_\p(E)|\leq 2N(\p)^{1/2}$.\\

Suppose that $E/L$ does not have complex multiplication, that is, $\End_{\Lbar}(E)=\ZZ$.       For each prime $\p\notin S$, there is a unique conjugacy class $\theta_\p$ of $\mathbf{H}^\times$ such that $P_{E,\p}(T)=\det(TI-\theta_\p)$ (this uses that $a_\p(E)^2-4N(\p)\leq 0$).   We can normalize these conjugacy classes by defining $\vartheta_\p=\theta_\p/\sqrt{N(\p)}$; it is the unique conjugacy class of $\mathbf{H}_1$ for which $\det(TI-\vartheta_\p)=T^2-(a_\p(E)/\sqrt{N(\p)})T+1$.    The \defi{Sato-Tate conjecture} for $E/L$ predicts that the conjugacy classes $\{\vartheta_\p\}_{\p\notin S}$ are equidistributed in $\mathbf{H}_1^\sharp$ with respect to the Sato-Tate measure, i.e., for any continuous function $f\colon \mathbf{H}_1^\sharp \to \CC$, we have
 \[
 \lim_{x\to +\infty}  \frac{1}{|\{\p \notin S: N(\p)\leq x\}|} \sum_{\p\notin S,\, N(\p)\leq x}  f(\vartheta_\p) = \int_{\mathbf{H}_1^\sharp} f(\xi) d\mu_{\textrm{ST}}(\xi).
 \]
Note that $\mathbf{H}_1$ and $\operatorname{SU}_2(\CC)$ are maximal compact subgroups of $(\mathbf{H}\otimes_\RR\CC)^\times \cong \GL_2(\CC)$ and are thus conjugate.   So our quaternion formulation agrees with the more familiar version dealing with conjugacy classes in $\operatorname{SU}_2(\CC)$.   [The Sato-Tate conjecture has been proved in the case where $L$ is totally real and $E$ has non-integral $j$-invariant, cf.~\cite{MR2470687,MR2470688,MR2630056}]

\begin{remark}
The analogous case is a Drinfeld module $\phi\colon A \to L[\tau]$ with generic characteristic and rank 2  where $L$ is a global function field.   The algebra $D_\phi$ is a central $F_\infty$-division algebra with invariant $-1/2$.  For each place $\p\neq \infty$ of good reduction, there is a unique conjugacy class $\theta_\p$ of $D_\phi^\times$ such that $\det(TI-\theta_\p)=P_{\phi,\p}(T)$.    This information is all encoded in our function $\rho_\infty$, since $\theta_\p$ is the conjugacy class containing $\rho_\infty(\Frob_\p)$.   We will discuss the equidistribution law in \S\ref{SS:equidistribution}; it will be a consequence of the function field version of the Chebotarev density theorem.
\end{remark}

Now suppose that $E/L$ has complex multiplication, and assume that $R:=\End_{\Lbar}(E)$ equals $\End_L(E)$.  The ring $R$ is an order in the quadratic imaginary field $K:=R\otimes_\ZZ \QQ$.  For $\p\notin S$, reduction of endomorphism rings modulo $\p$ induces an injective homomorphism $K \hookrightarrow \End_{\FF_\p}(E_{\p})\otimes_\ZZ\QQ$ whose image contains $\pi_\p$; let $\theta_\p$ be the unique element of $K$ that maps to $\pi_\p$.    

From the theory of complex multiplication, there is a continuous homomorphism 
\[
\rho_{E,\infty} \colon W_L \to (K\otimes_\QQ {\RR})^\times = (\End_L(E)\otimes_\ZZ \RR)^\times
\]
such that $\rho_{E,\infty}(\Frob_\p)=\theta_\p$ for all $\p\notin S$, where $W_L$ is the Weil group of $L$; see \cite{MR563921}*{Chap.~1 \S8}.  (Using the Weil group here is rather excessive; the image is abelian, so the representation factors through $W_L^\ab$ which in turn is isomorphic to the idele class group of $L$.)  Choose an isomorphism $\CC=K\otimes_\QQ {\RR}$.  We can normalize by defining $\vartheta_\p=\theta_\p/\sqrt{N(\p)}$ which belongs to the group $\mathbf{S}$ of complex numbers with absolute value $1$.   Then the \defi{Sato-Tate law} for $E/L$ says that the elements $\{\vartheta_\p\}_{\p\notin S}$ are equidistributed in $\mathbf{S}$.  This closely resembles the case where $\phi$ is a Drinfeld module of rank 2 and $\End_{L}(\phi)$ has rank 2 over $A$; we then have a continuous homomorphism $\rho_\infty\colon W_L \to B_\phi^\times=(\End_{L}(\phi)\otimes_A F_\infty)^\times$.

\subsection{Equidistribution law} \label{SS:equidistribution}
Let $\phi\colon A \to L[\tau]$ be a Drinfeld module of rank $n$.   To ease notation, set $D=D_\phi$.    Let $\OO_D$ be the valuation ring of $D$ with respect to the valuation $\ord_{\tau^{-1}}\colon D \to \ZZ\cup \{+\infty\}$.   The continuous homomorphism $\rho_\infty \colon W_L \to D^\times$ induces a continuous representation
\[
\widehat{\rho}_\infty\colon \Gal_L \to \widehat{D^\times}
\]
where $\widehat{D^\times}$ is the profinite completion of $D^\times$.

Now suppose that $L$ is finitely generated and that $\End_{\Lbar}(\phi)=\phi(A)$ (similar remarks will hold without the assumption on the endomorphism ring).        Choose a scheme $X$ as in \S\ref{SS:Sato-Tate law} and let $|X|$ be its set of closed points.  For a subset $\calS$ of $|X|$, define $F_\calS(s)= \sum_{x\in \calS} N(x)^{-s}$ where $N(x)$ is the cardinality of the residue field $\FF_x$.   The \defi{Dirichlet density} of $\calS$ is the value $\lim_{s\to d^+} F_\calS(s)/F_{|X|}(s)$, assuming the limit exists, where $d$ is the transcendence degree of $L$ (see \cite{MR1474696}*{Appendix~B} for details on Dirichlet density). \\

Let $\mu$ be the Haar measure on $H:=\widehat{\rho}_\infty(\Gal_L)$ normalized so that $\mu(H)=1$.    Take an open subset $U$ of $H$ that is stable under conjugation.   The {Chebotarev density theorem} then implies that the set 
\[
\{ x\in|X| \,: \, \widehat{\rho}_\infty(\Frob_x) \subseteq U\}
\]
has Dirichlet density $\mu(U)$, cf.~\cite{MR2018826}*{Corollary~3.5}.    This equidistribution law can be viewed as the analogue of Sato-Tate.    The choice of $X$ is not important since different choices will agree away from a set of points with density 0.

Theorem~\ref{T:main theorem non-CM} implies that the group $H$ is an open subgroup of $\widehat{D^\times}$.  So for a ``random'' $x\in |X|$, the element $\rho_\infty(\Frob_x)$ will resemble a random conjugacy class of $H$, and hence a rather generic element of $\widehat{D^\times}$.    

Fix a closed subgroup $V$ of $F_\infty^\times$ that does not lie in $\OO_\infty^\times$.  That $V$ is unbounded in the $\infty$-adic topology implies that the quotient group $D^\times/V$ is compact.  So as a quotient of $\widehat{\rho}_\infty$, we obtain a Galois representation $\tilde\rho\colon\Gal_L\to D^\times/V.$   The image $\tilde\rho(\Gal_L)$ is thus an open subgroup of finite index in $D^\times/V$ and as above, the Chebotarev density theorem gives an equidistribution law in terms of Dirichlet density.   These representations can be viewed as analogues of the normalization process described in \S\ref{SS:elliptic curves} for non-CM elliptic curves; observe that $\mathbf{H}^\times/\RR_{>0}$ is naturally isomorphic to $\mathbf{H}_1$ where $\RR_{>0}$ is the group of positive real numbers.

\begin{remark}
We have used Dirichlet density instead of natural density because the finite extensions of $L$ arising from $\rho_\infty$ are not geometric, i.e., the field of constants will grow.  Natural density can be used if one keeps in mind that $\rho_{\infty}(\Gal(L^\sep/L\kbar))=\rho_{\infty}(W_L)\cap \OO_D^\times$.
\end{remark}

There are many possibilities for the image of $\rho_\infty$ and hence there are many possible Sato-Tate laws for a Drinfeld module $\phi$; this contrasts with elliptic curves where there are only two expected Sato-Tate laws.  \\    

To give a concrete description of an equidistribution law, we now focus on a special case: the distribution of traces of Frobenius  when $\rho_\infty$ is surjective.   

For each closed point $x$ of $X$, we define the \defi{degree} of $x$ to be $\deg(x)=[\FF_x:\FF_\infty]$.   For each integer $d\geq 1$, let $|X|_d$ be the set of degree $d$ closed points of $X$.  Note that $|X|_d$ is empty if $d$ is not divisible by $[\FF_L:\FF_\infty]$ where $\FF_L$ is the field of constants of $L$.  (This notion of degree depends not only on $X$ but on the extension $L/F$ arising from $\phi$.)
  
 For each closed point $x$ of $X$, let $a_x(\phi)\in A$ be the \defi{trace of Frobenius} of $\phi$ at $x$; it is $(-1)^{n-1}$ times the coefficient of $T^{n-1}$ in $P_{\phi,x}(T)$.     We have $a_x(\phi)=\tr(\rho_\infty(\Frob_x))$ where $\tr\colon D\to F_\infty$ is the reduced trace map.  The Drinfeld module analogue of the Hasse bound says that $\ord_\infty(a_x(\phi)) \geq -\deg(x)/n$, and hence $a_x(\phi) \pi^{\lfloor \deg(x)/n \rfloor}$ belongs to $\OO_\infty$ where $\pi$ is a uniformizer of $F_\infty$.

\begin{theorem} \label{T:ST for surjective}
Let $\phi\colon A \to L[\tau]$ be a Drinfeld module of rank $n\geq 2$ with generic characteristic where $L$ is finitely generated.   Assume that $\rho_\infty(W_L)=D_\phi^\times$.    

Let $\pi$ be a uniformizer for $F_\infty$ and let $\mu$ be the Haar measure of $\OO_\infty$ normalized so that $\mu(\OO_\infty)=1$.  Let $\mathcal{S}$ be the set of positive integers that are divisible by $[\FF_L:\FF_\infty]$.  Fix a scheme $X$ as in \S\ref{SS:Sato-Tate law}.
\begin{romanenum}
\item  \label{I:ST for surjective i}
For an open subset $U$ of $\OO_\infty$, we have
\[
\lim_{\substack{d\in \mathcal{S},\, d\not\equiv 0 \,(\!\bmod{n}) \\d\to +\infty}} \frac{\#\{ x \in |X|_d :  a_x(\phi) \pi^{\lfloor d/n \rfloor} \in U \}}{\#|X|_d} = \mu(U).
\]
\item
Let $\nu$ be the measure on $\OO_\infty$ such that if $U$ is an open subset of one of the cosets $a+\pi \OO_\infty$ of $\OO_\infty$, then
\[
\nu(U) = \begin{cases}
      (q^{d_\infty (n-1)} -1)/ (q^{d_\infty n}-1)\cdot \mu(U) & \text{ if $U\subseteq \pi\OO_\infty$}, \\
      \,\,q^{d_\infty (n-1)}/ (q^{d_\infty n}-1)\cdot \mu(U) & \text{ otherwise}.
\end{cases}
\]
For an open subset $U$ of $\OO_\infty$, we have
\[
\lim_{\substack{d\in \mathcal{S},\, d\equiv 0 \,(\!\bmod{n}) \\d\to +\infty}} \frac{\#\{ x \in |X|_d :  a_x(\phi) \pi^{\lfloor d/n \rfloor} \in U \}}{\#|X|_d} = \nu(U).
\]
\end{romanenum}
\end{theorem}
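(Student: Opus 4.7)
The plan is to combine (a) a Chebotarev-type equidistribution of $\rho_\infty(\Frob_x)$ in $D_\phi^\times$, afforded by the hypothesis $\rho_\infty(W_L)=D_\phi^\times$, with (b) an explicit pushforward calculation exploiting the cyclic structure of the division algebra $D_\phi$.

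Since $D_\phi$ is a central $F_\infty$-division algebra of invariant $-1/n$, one can fix the unramified subfield $F_\infty^{(n)}\subset D_\phi$ of degree $n$ (with residue field $\FF_{q^{nd_\infty}}$) and a uniformizer $\Pi\in D_\phi$ satisfying $\Pi^n=\pi$ and $\Pi a=a^\sigma\Pi$ for $a\in F_\infty^{(n)}$, where $\sigma$ generates $\Gal(F_\infty^{(n)}/F_\infty)$; in particular $\ord_{\tau^{-1}}(\Pi)=d_\infty$. From the identity $\ord_{\tau^{-1}}(\rho_\infty(w))=-\deg(w)$ for $w\in W_L$, together with $\deg(\Frob_x)=d\cdot d_\infty$ for $x\in|X|_d$, we have $\rho_\infty(\Frob_x)\in\Pi^{-d}\OO_D^\times$. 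Writing $d=nm+r$ with $0\leq r<n$ (so $m=\lfloor d/n\rfloor$), centrality of $\pi=\Pi^n$ gives $\Pi^{-d}=\pi^{-m}\Pi^{-r}$, so $\rho_\infty(\Frob_x)=\pi^{-m}\Pi^{-r}h$ for some $h\in\OO_D^\times$, and $F_\infty$-linearity of $\tr$ yields $a_x(\phi)\pi^{\lfloor d/n\rfloor}=\tr(\Pi^{-r}h)$. Writing uniquely $h=\sum_{i=0}^{n-1}h_i\Pi^i$ with $h_i\in\OO_{F_\infty^{(n)}}$ and $h_0\in\OO_{F_\infty^{(n)}}^\times$, the vanishing $\tr(a\Pi^k)=0$ for $a\in F_\infty^{(n)}$ and $n\nmid k$ (seen after base change $D_\phi\otimes_{F_\infty}\overline{F_\infty}\cong M_n(\overline{F_\infty})$), combined with Galois invariance of $\Tr_{F_\infty^{(n)}/F_\infty}$, collapses the computation to $a_x(\phi)\pi^{\lfloor d/n\rfloor}=\Tr_{F_\infty^{(n)}/F_\infty}(h_r)$.

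For the equidistribution step, given any open normal subgroup $W\subset D_\phi^\times$ of finite index, the surjectivity $\rho_\infty(W_L)=D_\phi^\times$ and density of $W_L$ in $\Gal_L$ produce a continuous surjection $\Gal_L\twoheadrightarrow D_\phi^\times/W$ corresponding, after shrinking $X$, to a finite étale Galois cover of $X$. The natural-density Chebotarev theorem for varieties over finite fields (a consequence of Deligne's equidistribution from Weil~II) then implies that as $d\to\infty$ through $[\FF_L:\FF_\infty]\ZZ$, the conjugacy classes of $\rho_\infty(\Frob_x)$ for $x\in|X|_d$ equidistribute in $\Pi^{-d}\OO_D^\times/W$ with the normalized Haar measure. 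Passing to the inverse limit over $W$, the elements $\rho_\infty(\Frob_x)$ equidistribute in $\Pi^{-d}\OO_D^\times$ with respect to the normalized Haar measure on that coset; since $\tr$ is a class function, the pushforward to $F_\infty$ is well-defined on conjugacy classes.

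It remains to compute the pushforward measure. Approximating the open set $U\subseteq\OO_\infty$ by unions of balls $c+\pi^N\OO_\infty$, we are reduced to computing the Haar measure on $\OO_D^\times$ of $\{h:\Tr_{F_\infty^{(n)}/F_\infty}(h_r)\in U\}$. The normalized Haar measure on $\OO_D^\times$ projects to the joint Haar measure on $(h_0,\ldots,h_{n-1})\in\OO_{F_\infty^{(n)}}^\times\times\OO_{F_\infty^{(n)}}^{n-1}$. In case~(i), $r\neq 0$, so $h_r$ is Haar on $\OO_{F_\infty^{(n)}}$ (unrestricted); since $\Tr_{F_\infty^{(n)}/F_\infty}$ is a surjective $\OO_\infty$-linear map of free $\OO_\infty$-modules, it pushes Haar to Haar, giving the limit $\mu(U)$. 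In case~(ii), $r=0$ and $h_0$ is constrained to $\OO_{F_\infty^{(n)}}^\times$; subtracting from $\Tr_*(\text{Haar on }\OO_{F_\infty^{(n)}})=\mu$ the contribution $\Tr_*(\text{Haar on }\pi\OO_{F_\infty^{(n)}})$, whose density on $\pi\OO_\infty$ is governed by the residue-field trace $\FF_{q^{nd_\infty}}\to\FF_{q^{d_\infty}}$ (kernel of size $q^{(n-1)d_\infty}$), and then renormalizing by the total mass of Haar on $\OO_{F_\infty^{(n)}}^\times$, yields the formula for $\nu$. The principal technical obstacle is the natural-density Chebotarev step for the infinite-image representation $\rho_\infty$; it is handled by applying Chebotarev to each finite quotient $D_\phi^\times/W$, noting that the fiber over $d\cdot d_\infty\in\hat\ZZ$ is the compact coset $\Pi^{-d}\OO_D^\times/W$, and then taking the inverse limit over $W$.
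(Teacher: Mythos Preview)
Your argument is correct and follows essentially the same route as the paper's proof: both exploit the cyclic presentation of $D_\phi$ over its degree-$n$ unramified subfield, reduce the reduced trace of $\rho_\infty(\Frob_x)\pi^{\lfloor d/n\rfloor}$ to the field trace $\Tr_{W/F_\infty}$ of a single coordinate, apply Chebotarev to obtain equidistribution in the appropriate coset of $\OO_D^\times$, and then compute the pushforward by $\Tr_{W/F_\infty}$, distinguishing the case where the relevant coordinate is constrained to be a unit ($r=0$) from the unconstrained case ($r\neq 0$). The only cosmetic differences are that the paper passes once to the compact quotient $D^\times/\langle\pi\rangle$ rather than taking an inverse limit over finite quotients, and parametrizes the valuation-$(-i)$ shell as $\mathfrak{P}^{-i}\setminus\mathfrak{P}^{-(i-1)}$ rather than as $\Pi^{-i}\OO_D^\times$; these are equivalent.
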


\begin{remark}
Theorem~\ref{T:ST for surjective}(\ref{I:ST for surjective i}) proves much of a conjecture of E.-U.~Gekeler \cite{MR2366959}*{Conjecture~8.18}; which deals with rank 2 Drinfeld modules over $L=F=k(t)$ with $\pi=t^{-1}$.  (Gekeler's assumptions are weaker than $\End_{\Lbar}(\phi)=\phi(A)$ with $\rho_\infty$ surjective).    
\end{remark}

 \subsection{Application: Lang-Trotter bounds}
  Let $\phi\colon A \to L[\tau]$ be a Drinfeld module of rank $n$ with generic characteristic.  For simplicity, we assume that $L$ is a global function field and that $\End_{\Lbar}(\phi)=\phi(A)$.  Fix $X$ as in \S\ref{SS:Sato-Tate law}. 

Fix a value $a\in A$, and let $P_{\phi,a}(d)$ be the number of closed points $x$ of $X$ of degree $d$ such that $a_x(\phi)=a$ (see the previous section for definitions).   We will prove the following bound for $P_{\phi,a}(d)$ with our Sato-Tate law.

\begin{theorem} \label{T:LT bound}
With assumption as above, we have 
\[
P_{\phi,a}(d) \ll q^{d_\infty(1-1/n^2)d}
\] where the implicit constant depends only on $\phi$ and $\ord_\infty(a)$.
\end{theorem}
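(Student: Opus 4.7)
The plan is to combine the mod-$\lambda$ Galois representations attached to $\phi$ with the effective Chebotarev density theorem for function fields, optimising over the prime $\lambda$ of $A$. For each nonzero prime $\lambda$ of $A$ distinct from the bad primes of $\phi$, the action on the $\lambda$-adic Tate module gives a continuous representation $\rho_\lambda \colon \Gal_L \to \GL_n(A_\lambda)$ with mod-$\lambda$ reduction $\bar\rho_\lambda \colon \Gal_L \to \GL_n(\FF_\lambda)$. Under the assumption $\End_\Lbar(\phi)=\phi(A)$, the Drinfeld-module analogue of Serre's open image theorem (due to Pink, and an ingredient the paper is expected to cite or establish alongside its Sato--Tate law) gives $\bar\rho_\lambda(\Gal_L) \supseteq \SL_n(\FF_\lambda)$ for all primes $\lambda$ of sufficiently large residue field size $q_\lambda := \#\FF_\lambda$. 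Moreover, $\det\bar\rho_\lambda$ is the cyclotomic-type character coming from the Carlitz module on its $\lambda$-torsion, an abelian character; in particular its restriction to the geometric Galois group $\Gal(L^\sep/L\kbar)$ has bounded image, so the \emph{geometric} monodromy of $\bar\rho_\lambda$ is essentially $\SL_n(\FF_\lambda)$, of size $\sim q_\lambda^{n^2-1}$.

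Since $a_x(\phi)=a$ forces the congruence $\tr\bar\rho_\lambda(\Frob_x) = \bar a$ in $\FF_\lambda$, the count $P_{\phi,a}(d)$ is bounded above by the number of $x \in |X|_d$ whose Frobenius class lies in $C := \{g \in \bar\rho_\lambda(\Gal_L) : \tr g = \bar a\}$, a union of conjugacy classes with $|C|/|\bar\rho_\lambda(\Gal_L)| \sim 1/q_\lambda$. The function-field effective Chebotarev theorem, in its character-sum form, expresses this count as $(|C|/|\bar\rho_\lambda(\Gal_L)|)\cdot\#|X|_d$ plus an error controlled by $|\sum_{x\in|X|_d}\chi(\bar\rho_\lambda(\Frob_x))|$ for nontrivial irreducible characters $\chi$, which by Deligne's Weil~II bounds is $\ll \dim\chi\cdot q^{d_\infty d/2}$ with implicit constant depending only on $X$ and the ramification of $\rho_\lambda$. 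A Plancherel/Cauchy--Schwarz argument over characters of the geometric monodromy then yields
\[
P_{\phi,a}(d) \;\ll\; \frac{\#|X|_d}{q_\lambda} \;+\; |C^{\mathrm{geom}}|^{1/2}\cdot q^{d_\infty d/2},
\]
where $C^{\mathrm{geom}}$ is the intersection of $C$ with a fixed coset of the geometric $\SL_n$-monodromy and has size $\sim q_\lambda^{n^2-2}$. Using $\#|X|_d \sim q^{d_\infty d}$, the two summands balance when $q_\lambda^{n^2/2} = q^{d_\infty d/2}$, i.e.\ $q_\lambda \sim q^{d_\infty d/n^2}$; choosing a prime $\lambda$ of $A$ of this approximate residue size (possible since $A$ has primes of every sufficiently large residue degree) gives the claimed bound $P_{\phi,a}(d)\ll q^{d_\infty(1-1/n^2) d}$.

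The main obstacle is to extract the factor $|C^{\mathrm{geom}}|^{1/2}$ in the Chebotarev error, rather than the weaker $|C|^{1/2}\sim q_\lambda^{(n^2-1)/2}$ one would get by using the full arithmetic monodromy $\sim q_\lambda^{n^2}$: the latter would yield only $q^{d_\infty(1-1/(n^2+1))d}$. The saving of $q_\lambda^{1/2}$ comes from isolating the abelian (cyclotomic) contribution of $\det\bar\rho_\lambda$ from the genuinely geometric $\SL_n$-part, concretely by decomposing the $\ell$-adic cohomology of the cover associated with $\bar\rho_\lambda$ along the characters of $\det\bar\rho_\lambda$, or equivalently by passing to a bounded-degree cyclotomic base field before applying Chebotarev. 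A secondary, routine step is to verify that the implicit constants depend only on $\phi$ (through the genus of $X$, the finite set of bad primes, and the bounded ramification of $\rho_\lambda$ at these primes) and on $\ord_\infty(a)$ (which bounds the finite number of $\lambda$ for which $\bar a$ behaves anomalously and ensures that the reduction $\bar a\in\FF_\lambda$ is well defined for the $\lambda$ in the optimal range).
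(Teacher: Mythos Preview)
Your approach via the mod-$\lambda$ representations for varying finite primes $\lambda$ is the $\lambda$-adic route of David and Cojocaru--David, which the paper cites as yielding weaker exponents (for $n=2$, only $q^{(4/5)d}$ for general $a$). The paper's proof instead uses its $\infty$-adic representation $\rho_\infty$, working in the finite quotients $G_j=D_\phi^\times/(F_\infty^\times(1+\pi^j\OO_D))$ and optimizing over the depth $j\approx d/n^2$.

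The concrete gap is your assertion that $\det\bar\rho_\lambda$ has bounded geometric image, so that the geometric monodromy is essentially $\SL_n(\FF_\lambda)$. For Drinfeld modules this is false: the determinant is the mod-$\lambda$ character of a rank-$1$ Drinfeld module, and by the rank-$1$ open-image result its \emph{geometric} image has bounded \emph{index} in $\FF_\lambda^\times$, hence size comparable to $q_\lambda$. (You are importing intuition from abelian varieties over function fields, where the determinant is the true cyclotomic character and is geometrically trivial; Carlitz torsion points, by contrast, do not lie in $\kbar$.) Consequently the geometric monodromy is essentially all of $\GL_n(\FF_\lambda)$, your coset-restricted set still has size of order $q_\lambda^{n^2-1}$ rather than $q_\lambda^{n^2-2}$, and the optimization recovers only the exponent $1-1/(n^2+1)$. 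The paper's device is that at $\infty$ one can replace ``$\tr=a$'' by the center-invariant condition ``$\tr(\alpha\,\pi^{e(\alpha)})\equiv 0\pmod{\pi^j}$'', valid for every $j\leq \ord_\infty(a)+\lceil d/n\rceil$ by the Hasse bound; this is exactly what permits the quotient by the center $F_\infty^\times$ and gives $|C_j|$ of order $q^{d_\infty(n^2-2)j}$. No analogous normalization is available at a finite $\lambda$ unless $\lambda\mid a$, which also explains why the theorem's constant naturally depends on $\ord_\infty(a)$, a dependence with no source in your sketch. (A secondary omission: $\bar\rho_\lambda$ is ramified at the places of $L$ above $\lambda$, not only at the bad primes of $\phi$.)
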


The most studied case is $n=2$ which is analogous to the case of non-CM elliptic curves (see Remark~\ref{R:L-T}).  With $F=k(t)$, $A=k[t]$ and $L=F$, A.C.~Cojocaru and C.~David have shown that $P_{\phi,a}(d) \ll q^{(4/5)d}/d^{1/5}$ and $P_{\phi,0}(d) \ll q^{(3/4)d}$ where the implicit constant does not depend on $a$ (this also can be proved with the Sato-Tate law).  For $n=2$, the above theorem gives $P_{\phi,a}(d) \ll q^{(3/4)d}$ for all $a$.   For arbitrary rank $n\geq 2$, David \cite{MR1858082} proved that $P_{\phi,a}(d)\ll q^{\theta(n) d}/d$ where $\theta(n):=1- 1/(2n^2+4n)$.   These bounds were proved using the $\lambda$-adic representations ($\lambda\neq \infty$) associated to $\phi$.

\begin{remark} \label{R:L-T}
Let $E$ be a non-CM elliptic curve over $\QQ$.  Fix an integer $a$, and let $P_{E,a}(x)$ be the number of primes $p\leq x$ for which $E$ has good reduction and $a_p(E)=a$.  The \defi{Lang-Trotter conjecture} says that there is a constant $C_{E,a}\geq 0$ such that $P_{E,a}(x)\sim C_{E,a}\cdot x^{1/2}/\log x$ as $x\to +\infty$; see \cite{MR0568299} for heuristics and a description of the conjectural constant (if $C_{E,a}=0$, then the asymptotic is defined to mean that $P_{E,a}(x)$ is bounded as a function of $x$).   Under GRH, Murty, Murty and Saradha showed that $P_{E,a}(x)\ll x^{4/5}/(\log x)^{1/5}$ for $a\neq 0$ and $P_{E,0}(x)\ll x^{3/4}$ \cite{MR935007}.

Assuming a very strong form of the Sato-Tate conjecture for $E$ (i.e., the $L$-series attached to symmetric powers of $E$ have analytic continuation, functional equation and satisfy the Riemann hypothesis), V.~K.~Murty showed that $P_{E,a}(x)\ll x^{3/4} (\log x)^{1/2}$, see \cite{MR823264}.
\end{remark}

Let $|X|_d$ be the set of closed points of $X$ with degree $d$.   We shall assume from now on that $d$ is a positive integer divisible by $[\FF_L:k]$ where $\FF_L$ is the field of constants in $L$ (otherwise, $|X|_d=\emptyset$ and $P_{\phi,a}(d)=0$).   

Let us give a crude heuristic for an upper bound of $P_{\phi,a}(d)$.  Fix a point $x\in|X|_d$.  By the Drinfeld module analogue of the Hasse bound, we have $\ord_\infty(a_x(\phi)) \geq  - d/n$.  The Riemann-Roch theorem then implies that $|\{ f \in A : \ord_\infty(f) \geq - d/n \}| = q^{\lfloor d/n \rfloor d_\infty + 1 - g}$ for all sufficiently large $d$, where $g$ is the genus of $F$.   So assuming $a_x(\phi)$ is a ``random'' element of the set $\{f\in A: \ord_\infty(f) \geq -d/n \}$, we find that the ``probability'' that $a_x(\phi)$ equals $a$ is  $O(1/q^{d_\infty\cdot d/n})$.    So we conjecture that
\[
P_{\phi,a}(d) \ll \sum_{x\in |X|_d} \frac{1}{q^{d_\infty\cdot d/n}}= \#|X|_d \cdot \frac{1}{q^{d_\infty\cdot d/n}} \ll  \frac{q^{d_\infty\cdot d}}{d} \frac{1}{q^{d_\infty\cdot d/n}} = \frac{q^{d_\infty (1-1/n)d}}{d}.
\]

\begin{remark}
In this paper, we are only interested in upper bounds.  The most optimistic analogue of the Lang-Trotter conjecture would be the following: there is a positive integer $N$ and constants $C_{\phi,a}(d)\geq 0$ such that 
\[
P_{\phi,a}(d) \sim C_{\phi,a}(d) \cdot {q^{d_\infty (1-1/n)d}}/{d}
\] 
as $d\to +\infty$ where $C_{\phi,a}(d)$ depends only on $\phi$, $a$ and $d$ modulo $N$.   The Sato-Tate conjecture for $\phi$ would be an ingredient for an explicit description of the constant $C_{\phi,a}(d)$.  (The conjecture is in general false if we insist that $N=1$.  For rank 2 Drinfeld modules over $k(t)$ and $a=0$, \cite{MR1373559}*{Theorem~1.2} suggests that $N$ is usually 2.)
\end{remark}

To prove Theorem~\ref{T:LT bound}, we will consider the image of $\rho_\infty$ in the quotient $D_\phi^\times/(F_\infty^\times(1+\pi^j \OO_D))$ where $\pi$ is a uniformizer of $F_\infty$ and $j\approx d/n^2$.

\subsection{Compatible system of representations}

Let $\phi\colon A \to L[\tau]$ be a Drinfeld module of rank $n$.  For a non-zero ideal $\aA$ of $A$, let $\phi[\aA]$ be the group of $b\in \Lbar$ such that $\phi_a(b)=0$ for all $a\in A$ (where we identify each $\phi_a$ with the corresponding polynomial in $L[X]$).  The group $\phi[\aA]$ is an $A/\aA$-module via $\phi$ and if $\aA$ is not divisible by the characteristic of $\phi$, then $\phi[\aA]$ is a free $A/\aA$-module of rank $n$.  For a fixed place $\lambda\neq \infty$ of $F$, let $\p_\lambda$ be the maximal ideal of $\OO_\lambda$.  The $\lambda$-adic \defi{Tate module} of $\phi$ is defined to be 
\[
T_\lambda(\phi) := \Hom_{A_\lambda}\Bigl(F_\lambda/\OO_\lambda , \, \varinjlim_i \phi[\p_\lambda^i] \Bigr).
\]
If $\p_\lambda$ is not the characteristic of $\phi$, then $T_\lambda(\phi)$ is a free $\OO_\lambda$-module of rank $n$.  There is a natural Galois action on $T_\lambda(\phi)$ which gives a continuous homomorphism
\[
\rho_\lambda\colon \Gal_L \to \Aut_{\OO_\lambda}(T_\lambda(\phi)). 
\]

Now suppose that $\phi$ has generic characteristic and that $L$ is a finitely generated.  Take a scheme $X$ as in \S\ref{SS:Sato-Tate law}.   For a closed point $x$ of $X$, let $\lambda_x$ be the place of $F$ corresponding to the characteristic of $\phi_x$.  For a place $\lambda\neq \lambda_x$ of $F$, we have
\[
P_{\phi,x}(T)=\det(TI - \rho_\lambda(\Frob_x) )
\]
(for $\lambda\neq \infty$, we are using $ \Aut_{\OO_\lambda}(T_\lambda(\phi))\cong \GL_n(\OO_\lambda)$ and \cite{MR1196527}*{Theorem 3.2.3(b)}).  This property is one of the reasons it makes sense to view $\rho_\infty$ as a member of the family of compatible representations $\{\rho_\lambda\}$.

There is a natural map $\End_\Lbar(\phi) \hookrightarrow \End_{\OO_\lambda}(T_\lambda(\phi))$ and the image of $\rho_\lambda$ commutes with $\End_L(\phi)$.   We can now state the following important theorem of R.~Pink; it follows from \cite{MR1474696}*{Theorem~0.2} which is an analogue of Serre's open image theorem for elliptic curves \cite{MR0387283}.    Theorem~\ref{T:main theorem} can thus be viewed as the analogue of this theorem for the place $\infty$; our proof will closely follow Pink's.

\begin{theorem}[Pink] \label{T:Pink MT}
\label{T:Pink MT}
Let $\phi\colon A\to L[\tau]$ be a Drinfeld module with generic characteristic, and assume that the field $L$ is finitely generated.  Then for any place $\lambda\neq \infty$ of $F$,  the image of
\[
\rho_\lambda\colon \Gal_L \to \Aut_{\OO_\lambda}(T_\lambda(\phi))
\]
is commensurable with $\Cent_{\End_{\OO_\lambda}(T_\lambda(\phi))}(\End_\Lbar(\phi))^\times$.
\end{theorem}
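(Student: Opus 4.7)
The plan is to follow Pink's strategy, which adapts Serre's open image theorem for elliptic curves to the Drinfeld setting. The first step is to reduce to the case $\End_\Lbar(\phi) = \End_L(\phi)$: the finitely many generators of $\End_\Lbar(\phi)$ are defined over a fixed finite separable extension $L'/L$, so $\rho_\lambda(\Gal_{L'})$ has finite index in $\rho_\lambda(\Gal_L)$ and the two images are commensurable, allowing us to replace $L$ by $L'$. Writing $E = \End_L(\phi)$ and $V_\lambda = T_\lambda(\phi) \otimes_{\OO_\lambda} F_\lambda$, set $H_\lambda = \Cent_{\End_{F_\lambda}(V_\lambda)}(E)^\times$, viewed as a reductive algebraic group over $F_\lambda$. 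Since the Galois action commutes with endomorphisms, $\rho_\lambda(\Gal_L) \subseteq H_\lambda(F_\lambda)$, and the task is to show this inclusion is open.

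The key algebraic input is the Drinfeld module analogue of Tate's isogeny theorem (Taguchi, Tamagawa), which gives
\[ E \otimes_A F_\lambda \;=\; \End_{F_\lambda[\Gal_L]}(V_\lambda). \]
By the double commutant theorem, the associative $F_\lambda$-subalgebra of $\End_{F_\lambda}(V_\lambda)$ generated by $\rho_\lambda(\Gal_L)$ coincides with the centralizer of $E$, i.e.\ with the enveloping algebra of $H_\lambda$. One then needs to upgrade this algebraic identity into Zariski density of $\rho_\lambda(\Gal_L)$ in $H_\lambda$. Pink accomplishes this in two stages: first, a specialization argument of Hilbert-irreducibility type reduces to the case where $L$ has transcendence degree one over $k$; second, one analyzes the Newton polygons and weights of the Frobenius characteristic polynomials $P_{\phi,x}(T)$ at closed points $x$ of good reduction, and combines this information with Chebotarev density to produce enough Frobenius conjugacy classes whose semisimple parts force the Zariski closure to be reductive of full rank inside $H_\lambda$, and therefore equal to $H_\lambda$.

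Finally one passes from Zariski density to openness in the $\lambda$-adic topology: $\rho_\lambda(\Gal_L)$ is a compact closed subgroup of the $\lambda$-adic Lie group $H_\lambda(F_\lambda)$, and a standard $p$-adic Lie algebra lemma (in the style of Bogomolov) shows that a closed subgroup that is Zariski dense in a reductive algebraic group has Lie algebra equal to $\Lie(H_\lambda)$, hence is open. The main obstacle is the Zariski density step: in higher rank there is no modular-curve-style geometric input available, and ruling out a proper reductive subgroup $G_\lambda \subsetneq H_\lambda$ whose enveloping algebra already spans the full centralizer of $E$ requires a delicate analysis of slopes of the isocrystal of $\phi$ together with the specialization machinery. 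This is precisely where Pink's arguments go substantially beyond Serre's; the openness passage, by contrast, is essentially formal once Zariski density and commutativity with the endomorphism algebra are in hand.
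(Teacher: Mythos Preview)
The paper does not itself prove this theorem; it simply attributes it to Pink and cites \cite{MR1474696}*{Theorem~0.2}. Your sketch therefore goes well beyond what the paper provides, and it is worth comparing it to the argument the paper \emph{does} give in \S\ref{S:proof of non-CM}--\S\ref{S:main theorem proof} for the $\infty$-adic analogue, which closely follows Pink.

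There is a genuine gap in your final step. You claim that openness follows from ``a standard $p$-adic Lie algebra lemma (in the style of Bogomolov)'' once Zariski density is known. This is precisely where positive characteristic bites: $F_\lambda$ is a local field of characteristic $p$, there is no exponential map, and the Bogomolov argument from the $\ell$-adic theory over number fields does not transfer. The paper flags exactly this point in \S\ref{SS:openness}: ``If we were working over a local field of characteristic $0$, this would be enough to deduce that $\Gamma$ is an open subgroup\ldots\ However, in the positive characteristic setting the Lie theory is more complicated.'' Pink had to develop separate machinery for compact Zariski-dense subgroups of linear algebraic groups over local function fields \cite{MR1637068}, and that, together with a computation showing the trace field of the adjoint representation is all of $F_\lambda$ (cf.\ Lemma~\ref{L:adjoint field}), is what actually yields openness.

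Your account of the Zariski density step is also somewhat garbled. In Pink's argument (and in \S\ref{SS:Zariski density}), reductivity of the Zariski closure comes from irreducibility of the Galois representation, which in turn follows from the Tate conjecture and the double centralizer theorem---not from Frobenius analysis. The Frobenius tori at places of \emph{ordinary} reduction then furnish a cocharacter with weights $1$ (multiplicity $1$) and $0$ (multiplicity $n-1$), and a dedicated criterion (Lemma~\ref{L:Pink criterion}) forces the closure to be $\GL_n$. There is no Hilbert-irreducibility specialization step; Pink works directly over finitely generated $L$ via Dirichlet density. Finally, the passage from general $\End_\Lbar(\phi)$ to the case $\End_\Lbar(\phi)=\phi(A)$ is carried out not by arguing inside the centralizer $H_\lambda$ directly, but by an isogeny to a Drinfeld module over the larger coefficient ring $B$ (as in \S\ref{S:main theorem proof}).
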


\begin{example} [Explicit class field theory for rational function fields]  \label{Ex:CFT Hayes}

As an example of the utility of viewing $\rho_\infty$ as a legitimate member of the family $\{\rho_\lambda\}_\lambda$, we give an explicit description of the maximal abelian extension $F^\ab$ in $F^\sep$ of the field $F=k(t)$, where $k$ is a finite field with $q$ elements.  We will recover the description of $F^\ab$ of Hayes in \cite{MR0330106}.   Using the ideas arising from this paper, we have given a description of $F^{\ab}$ for a general global function field $F$, see \cite{Zywina-CFT}.

Let $\infty$ be the place of $F$ correspond to the valuation for which $\ord_\infty(f)=-\deg f(t)$ for each non-zero $f\in k[t]$; the function $t^{-1}$ is a uniformizer for $\OO_\infty$.  The ring of rational functions that are regular away from $\infty$ is $A=k[t]$.    Let $\phi\colon A \to F[\tau]$ be the homomorphism of $k$-algebras that satisfies $\phi_t=t + \tau$; this is a Drinfeld module of rank $1$ called the \defi{Carlitz module}.   

If $\p$ is a \emph{monic} irreducible polynomial of $k[t]$, then $\rho_\lambda(\Frob_\p)=\p$ for every place $\lambda$ of $F$ except for the one corresponding to $\p$ (for $\lambda\neq \infty$, this follows from \cite{MR0330106}*{Cor.~2.5}).  In particular, one finds that the image of $\rho_\infty\colon W_F\to D_\phi^\times=F_\infty^\times$ must lie in $\ang{t}\cdot (1+t^{-1}\OO_\infty)$.    For $\lambda\neq \infty$, we make the identification $\Aut_{\OO_\lambda}(T_\lambda(\phi))=\OO_\lambda^\times$.  Combining our $\lambda$-adic representations together, we obtain a single continuous homomorphism
\[
\prod_{\lambda} \rho_\lambda \colon W_F^{\ab} \to \Big(\prod_{\lambda\neq \infty} \OO_\lambda^\times\Big) \times \ang{t}\cdot (1+t^{-1}\OO_\infty).
\]

Let $\mathbf{A}^\times_F$ be the idele group of $F$.  The homomorphism $(\prod_{\lambda\neq \infty} \OO_\lambda^\times) \times \ang{t}\cdot (1+t^{-1}\OO_\infty)\to \mathbf{A}^\times_F/F^\times$  obtained by composing the inclusion into $\mathbf{A}_F^\times$ with the quotient map is an isomorphism.   Composing ${\prod}_{\lambda} \rho_\lambda$ with this map, we obtain a continuous homomorphism
\[
\beta\colon W_F^{\ab}\to \mathbf{A}^\times_F/F^\times.
\]
The map $\beta$ embodies explicit class field theory for $F$.  Indeed, it is an isomorphism and the homomorphism $W_F^{\ab}\xrightarrow{\sim} \mathbf{A}^\times_F/F^\times,\, s \mapsto \beta(s^{-1})$ is the inverse of the \defi{Artin map} of class field theory!  See Remark~\ref{R:CFT Hayes}, for further details.  In particular, observe that the homomorphism $\beta$ does not depend on our choice of $\infty$ and $\phi$.   

By taking profinite completions, we obtain an isomorphism
\[
\Gal(F^\ab/F) \xrightarrow{\sim}  \Big(\prod_{\lambda\neq \infty} \OO_\lambda^\times\Big) \times \widehat{\ang{t}}\cdot (1+t^{-1}\OO_\infty).
\]
of profinite groups.  This isomorphism allows us to view $F^\ab$ as the compositum of three linearly disjoint fields.   The first is the union $K_1$ of the fields $F(\phi[\aA])$ where $\aA$ varies over the non-zero ideals of $A$, see \cite{MR0330106} for details; these extensions were first constructed by Carlitz.  We have $\Gal(K_1/F)\cong \prod_{\lambda\neq \infty} \OO_\lambda^\times$.   The second extension is the the field $K_2=\kbar(t)$; it satisfies $\Gal(K_2/F)\cong \Gal(\kbar/k) \cong \widehat{\ZZ}$.    

Finally, let us describe the third field $K_3\subset F^\ab$, i.e., the subfield for which $\rho_\infty$ induces an isomorphism $\Gal(K_3/F)\xrightarrow{\sim} 1+t^{-1}\OO_\infty$.    We first find a series $u=\sum_{i=0}^\infty a_i \tau^{-i} \in \Fbar\twistedseries^\times$ for which $u^{-1}\phi_t u = \tau$, and hence $u^{-1}\phi(A)u\subseteq \kbar\twistedLaurent$.  Expanding out $\phi_t u = u\tau$, this translates into the equations:
\[
a_0\in k^\times\quad\text{ and }\quad a_{j+1}^q-a_{j+1}= -t a_j \quad \text{ for $j\geq0$}.
\]
Set $a_0=1$ and recursively find  $a_j \in F^\sep$ that satisfy these equations.   We then have a chain of fields $F=F(a_0)\subseteq F(a_1)\subseteq F(a_2) \subseteq \ldots$.  Note that the field $F(a_j)$ does not depend on the choice of $a_j$ and $[F(a_j):F] \leq q^j$.   For each $j\geq 0$, let $L_j$ be the subfield of $K_3$ for which $\rho_\infty$ induces an isomorphism $\Gal(L_j/F) \xrightarrow{\sim} (1+t^{-1}\OO_\infty)/(1+t^{-(j+1)}\OO_\infty)$.    The field $L_j$ depends only on $u \pmod{\tau^{-(j+1)}\Fbar\twistedseries}$, so we have $L_j \subseteq F(a_j)$.   Since $q^j=[L_j:F]\leq [F(a_j):F]\leq q^j$, we deduce that 
\[
K_3=\bigcup_{j\geq 0} F(a_j) \quad \text{and}\quad \Gal(F(a_j)/F)\cong (1+t^{-1}\OO_\infty)/(1+t^{-(j+1)}\OO_\infty).
\]
In \cite{MR0330106}, Hayes constructs the three fields $K_1,K_2,K_3$ and then showed that their compositum is $F^\ab$.   The field $K_3$ is constructed by consider the torsion points of another Drinfeld module but starting with the ring $k[t^{-1}]$ instead.    The advantage of including $\rho_\infty$ is that the proof is easier and that the fields $K_2$ and $K_3$ arise naturally from our canonical map $\beta$.
\end{example}

\subsection{Overview}

In \S\ref{S:construction}, we shall define our Sato-Tate representation $\rho_\infty$ and prove its basic properties.  

In \S\ref{SS:rank 1}, we prove the rank 1 case of Theorem~\ref{T:main theorem}.  The proof essentially boils down to an application of class field theory.  The rank 1 case will also be a key ingredient in the general proof of Theorem~\ref{T:main theorem}.  

In \S\ref{S:Tate}, we shall prove an $\infty$-adic version of the Tate conjecture.   The prove entails replacing $\phi$ with its associated $A$-motive (though we will not use that terminology), and then using Tamagawa's analogue of the Tate conjecture.  We have avoided the temptation to define a Sato-Tate law for general $A$-motives, but the author hopes to return to it in future work (the corresponding openness theorem would likely be extremely difficult for a general $A$-motive since the general version of Theorem~\ref{T:Pink MT} remains open).

In \S\ref{S:proof of non-CM}, we prove Theorem~\ref{T:main theorem non-CM}.   Our proof uses most of the ingredients from Pink's proof of Theorem~\ref{T:Pink MT}.  In \S\ref{S:main theorem proof}, we deduce Theorem~\ref{T:main theorem} from Theorem~\ref{T:main theorem non-CM}.

Finally, in \S\ref{S:equidistribution proof} and \S\ref{S:LT proof}, we prove Theorem~\ref{T:ST for surjective} and Theorem~\ref{T:LT bound}, respectively.   A key ingredient in both proofs is the Chebotarev density theorem.

\subsection*{Acknowledgements} Thanks to Bjorn Poonen and Lenny Taelman.

\section{Construction of $\rho_\infty$} \label{S:construction}
 Let $\phi\colon A\to L[\tau]$ be a Drinfeld module.  As noted in \S\ref{SS:Sato-Tate law}, $\phi$ extends uniquely to a homomorphism 
\[
\phi\colon F_\infty \hookrightarrow L^\perf\twistedLaurent
\]
that satisfies (\ref{E:degrees}) for all non-zero $x\in F_\infty$.   

Our first task is to prove that there exists a series $u\in \Lbar\twistedLaurent^\times$ for which $u^{-1} \phi(F_\infty) u \subseteq \kbar\twistedLaurent$; this is shown in \cite{MR2018826}*{\S2}, but we will reprove it in order to observe that the coefficients of $u$ actually lie in $L^\sep$.  Fix a non-constant $y\in A$.   We have $\phi_y = \sum_{j=0}^{h} b_j \tau^{j}$ with $b_j\in L$ and $b_h\neq 0$, where $h:=-nd_\infty\ord_\infty(y)$.    Choose a solution $\delta\in L^\sep$ of $\delta^{q^h-1}= 1/b_h$.    Set $a_0=1$ and recursively solve for $a_1,a_2,a_3\ldots \in \Lbar$ by the equation
\begin{equation} \label{E:artin-schreier}
a_i^{q^h} - a_i = - \sum_{\substack{0\leq j\leq h-1\\ i+j-h\geq 0}}  \delta^{q^{j}-1} b_j a_{i+j-h}^{q^j}.
\end{equation}
The $a_i$ belong to $L^\sep$ since (\ref{E:artin-schreier}) is a separable polynomial in $a_i$ and the values $b_j$ and $\delta$ belong to $L^\sep$.

\begin{lemma} \label{L:u exists}
With $\delta$ and $a_i$ as above, the series $u:= \delta (\sum_{i=0}^\infty a_i \tau^{-i}) \in \Lbar\twistedLaurent^\times$ has coefficients in $L^\sep$ and satisfies $u^{-1} \phi(A) u \subseteq \kbar\twistedLaurent$.
\end{lemma}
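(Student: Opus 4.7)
The plan is to reduce the lemma to the single identity $\phi_y u = u\tau^h$ and then exploit the commutativity of $\phi(A)$.

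\textbf{Step 1: Properties of $u$.} First, I would observe that $u$ has coefficients in $L^\sep$ and is a unit in $\Lbar\twistedLaurent$. The coefficients of $u$ are $\delta, \delta a_1, \delta a_2, \dots \in L^\sep$, where the $a_i$ lie in $L^\sep$ because they are defined recursively as roots of the separable polynomials obtained from (\ref{E:artin-schreier}) (the derivative in $a_i$ is $-1$), and $\delta \in L^\sep$ by construction. The leading coefficient $\delta$ is nonzero, so $\ord_{\tau^{-1}}(u)=0$ and $u\in\Lbar\twistedseries^\times \subset \Lbar\twistedLaurent^\times$.

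\textbf{Step 2: Show $\phi_y u = u\tau^h$.} This is the main computation. Writing $\phi_y = \sum_{j=0}^h b_j\tau^j$ and expanding both sides by repeatedly applying $\tau^j c = c^{q^j}\tau^j$, the coefficient of $\tau^{-m}$ (for $m\geq -h$) in $\phi_y u$ equals
\[
\sum_{\substack{0\leq j\leq h\\ m+j\geq 0}} b_j \delta^{q^j} a_{m+j}^{q^j},
\]
while the coefficient of $\tau^{-m}$ in $u\tau^h$ equals $\delta a_{m+h}$. Setting $i := m+h$ and isolating the $j=h$ term, the identity $\phi_y u = u\tau^h$ is equivalent to
\[
\delta a_i - b_h \delta^{q^h} a_i^{q^h} = \sum_{\substack{0\leq j\leq h-1\\ i+j-h\geq 0}} b_j \delta^{q^j} a_{i+j-h}^{q^j}
\]
for every $i\geq 0$. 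The defining relation $\delta^{q^h-1} = 1/b_h$ gives $b_h\delta^{q^h}=\delta$, so dividing by $\delta$ reduces this to exactly the recurrence (\ref{E:artin-schreier}) that defined $a_i$ (the case $i=0$ is the empty relation $a_0^{q^h}=a_0$, consistent with $a_0=1\in \FF_{q^h}$). Hence $\phi_y u = u\tau^h$, i.e. $u^{-1}\phi_y u = \tau^h$.

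\textbf{Step 3: Centralizer of $\tau^h$.} I claim the centralizer of $\tau^h$ in $\Lbar\twistedLaurent$ is $\FF_{q^h}\twistedLaurent \subseteq \kbar\twistedLaurent$. Indeed, if $z = \sum_i c_i\tau^{-i}\in\Lbar\twistedLaurent$, then $\tau^h z - z\tau^h = \sum_i(c_i^{q^h}-c_i)\tau^{h-i}$, which vanishes iff $c_i\in\FF_{q^h}$ for every $i$. Since $A$ is commutative, every $\phi_a$ commutes with $\phi_y$, so every $u^{-1}\phi_a u$ commutes with $u^{-1}\phi_y u = \tau^h$, and thus lies in $\FF_{q^h}\twistedLaurent\subseteq \kbar\twistedLaurent$. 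This gives $u^{-1}\phi(A)u\subseteq \kbar\twistedLaurent$, as required.

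The main obstacle is bookkeeping in Step~2: one must carefully track the index shifts coming from $\tau^j\delta a_i\tau^{-i} = \delta^{q^j}a_i^{q^j}\tau^{j-i}$ and isolate the leading $j=h$ contribution, which is exactly what the choice $\delta^{q^h-1}=1/b_h$ is designed to cancel. Once that is done, Step~3 is essentially automatic, and the coefficient claim in Step~1 is immediate from the separability of (\ref{E:artin-schreier}).
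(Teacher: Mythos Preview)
Your proof is correct and follows the same approach as the paper's own argument: establish $\phi_y u = u\tau^h$ by direct expansion using the recursion~(\ref{E:artin-schreier}) and the choice of $\delta$, then use that the centralizer of $\tau^h$ in $\Lbar\twistedLaurent$ is $k_h\twistedLaurent$ together with the commutativity of $\phi(A)$ to conclude. You simply supply more of the coefficient bookkeeping that the paper leaves to the reader.
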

\begin{proof}
 Expanding out the series $\phi_y u$ and $u\tau^{h}$ and comparing, we find that $\phi_y u=u\tau^{h}$ (use (\ref{E:artin-schreier}) and $\delta^{q^h-1}= 1/b_h$).  Let $k_h$ be the degree $h$ extension of $k$ in $\kbar$.  The elements of the ring $\Lbar\twistedLaurent$ that commute with $\tau^h$ are $k_h\twistedLaurent$.  Since $\tau^h=u^{-1}\phi_y u$ belongs to the commutative ring $u^{-1}\phi(F_\infty) u$, we find that $u^{-1}\phi(F_\infty) u$ is a subset of $k_h\twistedLaurent$.   Thus $u \in \Lbar\twistedLaurent^\times$ has coefficients in $L^\sep$ and satisfies $u^{-1} \phi(F_\infty) u \subseteq \kbar\twistedLaurent$.
\end{proof}

Choose any series $u\in \Lbar\twistedLaurent^\times$ that satisfies $u^{-1}\phi(A)u \subseteq \kbar\twistedLaurent$ and has coefficients in $L^\sep$.  Define the function
\begin{align*}
\rho_\infty\colon W_L &\to D_\phi^\times,\quad \sigma\mapsto \sigma(u) \tau^{\deg(\sigma)} u^{-1}.
\end{align*}
Recall that $D_\phi$ is the centralizer of $\phi(A)$ in $\Lbar\twistedLaurent$.   The following lemma gives some basic properties of $\rho_\infty$; we will give the proof in \S\ref{SS:facts proofs}.   In particular, $\rho_\infty$ is a well-defined continuous homomorphism that does not depend on the initial choice of $u$.  Our construction varies slightly from Yu's, cf.~\S\ref{SS:difference in construction}.

\begin{lemma} \label{L:u facts}
\begin{romanenum}
\item \label{I:u fact i}
There is a series $u \in \Lbar\twistedLaurent^\times$ that satisfies $u^{-1}\phi(F_\infty) u \subseteq \kbar\twistedLaurent$, and any such $u$ has coefficients in $L^\sep$.
\item \label{I:u fact ii}  
The ring $D_\phi$ is a central $F_\infty$-division algebra with invariant $-1/n$. 
\item \label{I:u fact iii}
Fix $u$ as in (\ref{I:u fact i}) and take any $\sigma \in W_L$.  The series $\sigma(u)\tau^{\deg(\sigma)} u^{-1}$ belongs to $D_\phi^\times$ and does not depend on the initial choice of $u$.
\item \label{I:u fact iv}
For $\sigma\in W_L$, we have $\ord_{\tau^{-1}} \rho_\infty(\sigma)=-\deg(\sigma)$.
\item 
The map $\rho_\infty\colon W_L \to D_\phi^\times$ is a continuous group homomorphism.   
\item  
The group $\rho_\infty(W_L)$ commutes with $\End_L(\phi)$.
\end{romanenum}
\end{lemma}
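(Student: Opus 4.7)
The plan is to verify the six parts by direct computation in the twisted Laurent series ring, repeatedly using the commutation identity $\tau^m\cdot a = a^{q^m}\tau^m$ together with the fact that $\sigma|_{\kbar}$ coincides with the $q^{\deg\sigma}$-power Frobenius for every $\sigma\in W_L$.

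Part (i) splits into two halves. Existence is Lemma~\ref{L:u exists}. For the claim that any $u$ has coefficients in $L^\sep$, I would fix a non-constant $y\in A$, write $u^{-1}\phi_y u \in \kbar\twistedLaurent$, and expand $\phi_y u = u\cdot(u^{-1}\phi_y u)$, matching coefficients of powers of $\tau^{-1}$. This yields recurrences of exactly the shape of \eqref{E:artin-schreier}, each of which is separable over $L$. For part (ii), conjugation by $u$ identifies $D_\phi$ with the centralizer in $\kbar\twistedLaurent$ of the subfield $E := u^{-1}\phi(F_\infty)u$, an isomorphic copy of $F_\infty$. Since \eqref{E:degrees} implies that $\ord_{\tau^{-1}}|_E$ has ramification index $nd_\infty$ over the $\infty$-adic valuation on $F_\infty$, standard structure theory for complete discretely valued division algebras yields that $D_\phi$ is $F_\infty$-central of dimension $n^2$. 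The invariant $-1/n$ then follows from the uniformizer computation $\ord_{\tau^{-1}}(\phi_\pi)=nd_\infty$ for a uniformizer $\pi$ of $F_\infty$, as in Drinfeld's original calculation.

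For part (iii), I would fix $x\in F_\infty$, write $\phi_x u = u\psi_x$ with $\psi_x\in\kbar\twistedLaurent$, and apply $\sigma\in W_L$ (which fixes the $L$-coefficients of $\phi_x$) to get $\phi_x\sigma(u) = \sigma(u)\sigma(\psi_x)$. The identity $\tau^{\deg\sigma}\sigma(\psi_x)=\psi_x\tau^{\deg\sigma}$, which combines the commutation rule with $\sigma|_{\kbar}=\Frob^{\deg\sigma}$, then gives $\rho_\infty(\sigma)\phi_x=\phi_x\rho_\infty(\sigma)$, placing $\rho_\infty(\sigma)$ in $D_\phi^\times$. For the independence claim, given another valid choice $u'$, I would show $v:=u^{-1}u'$ lies in $\kbar\twistedLaurent^\times$ (using the structure of embeddings of $F_\infty$ into $\kbar\twistedLaurent$), after which the commutation identity $\tau^{\deg\sigma} v\tau^{-\deg\sigma}=\sigma(v)$ gives $\sigma(u')\tau^{\deg\sigma}(u')^{-1} = \sigma(u)\tau^{\deg\sigma}u^{-1}$. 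Part (iv) is immediate: the leading coefficient of $u$ and hence of $\sigma(u)$ is a unit, so $\ord_{\tau^{-1}}(u)=\ord_{\tau^{-1}}(\sigma(u))=0$, while $\ord_{\tau^{-1}}(\tau^{\deg\sigma})=-\deg\sigma$.

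Part (v) has two pieces. The homomorphism identity, for $\sigma_1,\sigma_2\in W_L$, reduces to $\tau^{\deg\sigma_1}\cdot u^{-1}\sigma_2(u) = \sigma_1(u^{-1}\sigma_2(u))\cdot\tau^{\deg\sigma_1}$; this is the commutation rule applied to $u^{-1}\sigma_2(u)$, whose coefficients lie in $L^\sep$. Continuity holds because for $\sigma$ close enough to the identity in the Krull topology, $\sigma(u)$ is $\tau^{-1}$-adically close to $u$ (only a high-order tail of the coefficients is moved by $\sigma$). Finally, for part (vi), any $e\in\End_L(\phi)\subseteq L[\tau]$ has $L$-coefficients and commutes with $\phi(A)$, so the calculation from (iii) applied to $e$ in place of $\phi_x$ shows $\rho_\infty(\sigma)e=e\rho_\infty(\sigma)$. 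The main obstacle I foresee is the independence-from-$u$ claim in (iii), which requires a genuine structural statement about how two admissible choices of $u$ can differ; the invariant computation in (ii) is also a substantive input from Drinfeld's theory rather than a routine verification.
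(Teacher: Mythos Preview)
Your overall strategy is sound and close to the paper's, but there is one genuine slip that recurs in parts (v) and (vi) and deserves attention.

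In your argument for the homomorphism property in (v), you correctly reduce to the identity
\[
\tau^{\deg\sigma_1}\cdot\bigl(u^{-1}\sigma_2(u)\bigr)=\sigma_1\bigl(u^{-1}\sigma_2(u)\bigr)\cdot\tau^{\deg\sigma_1},
\]
but you justify it by saying the coefficients of $w:=u^{-1}\sigma_2(u)$ lie in $L^\sep$. That is not enough: the commutation rule gives $\tau^m w=\Frob_q^m(w)\,\tau^m$, and $\Frob_q^{\deg\sigma_1}$ agrees with $\sigma_1$ on a coefficient $c$ only when $c\in\kbar$. What you actually need is $w\in\kbar\twistedLaurent$. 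This is true, and it is precisely the structural fact you isolated in (iii): since $\sigma_2(u)$ is another admissible choice of $u$, the two differ by an element of $\kbar\twistedLaurent^\times$. The paper organizes the argument so that this dependency is explicit: it proves independence first, then computes $\rho_\infty(\sigma_1)$ using $\sigma_2(u)$ in place of $u$, after which the product $\rho_\infty(\sigma_1)\rho_\infty(\sigma_2)$ telescopes to $\rho_\infty(\sigma_1\sigma_2)$ with no further commutation needed. The same issue appears in your (vi): to run the (iii) computation with $e\in\End_L(\phi)$ in place of $\phi_x$, you need $u^{-1}e\,u\in\kbar\twistedLaurent$, which follows from $e\in D_\phi$ together with the identification in (ii), but is not automatic from ``$e$ has $L$-coefficients.'' The paper instead uses independence again, replacing $u$ by $eu$.

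Two minor points. In (iii) the intermediate identity should read $\tau^{\deg\sigma}\psi_x=\sigma(\psi_x)\,\tau^{\deg\sigma}$, not $\tau^{\deg\sigma}\sigma(\psi_x)=\psi_x\tau^{\deg\sigma}$; your conclusion is correct once this is fixed. In (iv) you do not need $\ord_{\tau^{-1}}(u)=0$ (which need not hold for an arbitrary admissible $u$); what you need, and what is obvious, is $\ord_{\tau^{-1}}(\sigma(u))=\ord_{\tau^{-1}}(u)$.
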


\begin{lemma} \label{L:infty independence}
Assume that $\phi$ has generic characteristic, $L$ is finitely generated, and let $X$ be a scheme as in \S\ref{SS:Sato-Tate law}.  Then the homomorphism $\rho_\infty\colon W_L \to D_\phi^\times$ is unramified at each closed point of $x$ of $X$ and we have $P_{\phi,x}(T)=\det(TI-\rho_\infty(\Frob_x))$.
\end{lemma}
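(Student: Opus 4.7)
The plan is to construct the series $u$ of Lemma~\ref{L:u exists} so that all of its coefficients lie in the maximal unramified extension of $L$ at $x$. Then inertia at $x$ acts trivially on $u$ and has degree zero, which gives unramifiedness; and the reduction $\bar u \in \kbar\twistedLaurent^\times$ of $u$ lets me compute $\rho_\infty(\Frob_x)$ explicitly in terms of $\bar u$ and the Frobenius endomorphism $\pi_x = \tau^d$ of the reduction $\phi_x$.

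For the integrality step, I would fix a non-constant $y \in A$ and write $\phi_y = \sum_{j=0}^h b_j \tau^j$. After shrinking $X$ if necessary, I may assume each $b_j$ is a regular function on $X$ and that $b_h$ is a unit on $X$. Fix a place $\tilde x$ of $L^\sep$ above $x$ and let $\hat R^\unr$ be the associated strictly henselian valuation ring, with residue field $\kbar$. The equation $\delta^{q^h-1} = 1/b_h$ is separable (since $\gcd(q^h-1,\Char k)=1$) with integral unit right-hand side, so it admits a root in $\hat R^\unr$. Inductively, each Artin-Schreier-type equation~(\ref{E:artin-schreier}) defining $a_{i+1}$ is separable (its derivative is $-1$) with right-hand side integral at $\tilde x$, so the root $a_{i+1}$ also lies in $\hat R^\unr$. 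Hence $u \in \hat R^\unr\twistedLaurent^\times$. Any $\sigma$ in the inertia group $I_{\tilde x}$ fixes $\hat R^\unr$ (so $\sigma(u)=u$) and acts trivially on the residue field $\kbar$ (so $\deg(\sigma)=0$), giving $\rho_\infty(\sigma)=u\tau^0 u^{-1}=1$.

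For the Frobenius computation, let $d=[\FF_x:k]$ and let $\bar u$ be the reduction of $u$. Because reduction commutes with the defining equations of $u$, $\bar u$ is produced by the same recipe applied to $\phi_x$; in particular $\bar u^{-1}\phi_x(A)\bar u \subseteq \kbar\twistedLaurent$. A choice of lift $\Frob_x \in W_L$ acts on $\hat R^\unr$ lifting $\sigma_x\colon a\mapsto a^{q^d}$, so $\Frob_x(u)$ reduces to $\sigma_x(\bar u)$. The commutation rule $\tau a = a^q\tau$ gives the key identity $\sigma_x(\bar u) = \tau^d \bar u \tau^{-d}$ in $\kbar\twistedLaurent$. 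Since $\pi_x = \tau^d$, I compute
\[
u^{-1}\rho_\infty(\Frob_x) u \;=\; u^{-1}\Frob_x(u)\tau^d \;\equiv\; \bar u^{-1}\sigma_x(\bar u)\tau^d \;=\; \bar u^{-1}\tau^d \bar u \;=\; \bar u^{-1}\pi_x \bar u \pmod{\text{max ideal}}.
\]
But $u^{-1}\rho_\infty(\Frob_x) u$ lies in $u^{-1}D_\phi u \subseteq \Cent_{\Lbar\twistedLaurent}(\tau^h) = k_h\twistedLaurent$, whose coefficients already reside in $\kbar$, so the congruence is actually an equality in $\kbar\twistedLaurent$.

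Finally, applying the same ``coefficients in $\kbar$'' argument to each $u^{-1}\phi(a)u$ shows $u^{-1}\phi(A)u = \bar u^{-1}\phi_x(A)\bar u$, and consequently $u^{-1}D_\phi u = \bar u^{-1} D_{\phi_x} \bar u$ as $F_\infty$-subalgebras of $\kbar\twistedLaurent$. Hence the reduced characteristic polynomial of $\rho_\infty(\Frob_x)$ in $D_\phi$ equals that of $\pi_x$ in $D_{\phi_x}$, which is $P_{\phi,x}(T)$ by definition. The main obstacle I foresee is tracking integrality through the infinite Artin-Schreier recursion and properly identifying the two copies of the centralizer inside $\kbar\twistedLaurent$ arising from $\phi$ and $\phi_x$; once these are in place the rest is an unwinding of definitions.
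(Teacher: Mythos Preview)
Your approach is correct and is essentially an explicit, self-contained version of the argument the paper imports from Yu \cite{MR2018826}: choose $u$ with coefficients integral at $x$ (hence in the strict henselization), conclude that inertia acts trivially, and then reduce to the finite-field computation of Example~\ref{Ex:finite Drinfeld}. The paper's own proof is just that citation together with the pointer to Example~\ref{Ex:finite Drinfeld}, so your write-up supplies precisely the details the paper omits.

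One small correction: in your final sentence, the identity $\det(TI-\pi_x)=P_{\phi,x}(T)$ in $D_{\phi_x}$ is \emph{not} ``by definition.'' The characteristic polynomial $P_{\phi,x}(T)$ is defined as the degree-$n$ power of the minimal polynomial of $\pi_x$ over $F$, and matching this with the reduced characteristic polynomial in $D_{\phi_x}$ requires knowing that $F_\infty[\pi_x]$ is a field (so that the minimal polynomials over $F$ and $F_\infty$ coincide). That is exactly the content of Example~\ref{Ex:finite Drinfeld}, which invokes \cite{MR0439758}*{Prop.~2.1}; you should cite it rather than claim it is immediate. With that adjustment, and with the care you already flag about identifying $u^{-1}D_\phi u$ with $\bar u^{-1}D_{\phi_x}\bar u$ inside $k_h\twistedLaurent$ (both being the centralizer of the common subring $u^{-1}\phi(F_\infty)u=\bar u^{-1}\phi_x(F_\infty)\bar u$), the argument goes through cleanly.
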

\begin{proof}
These results follow from \cite{MR2018826}; they only depend on $\rho_\infty$ up to conjugacy so we may use Yu's construction (see \S\ref{SS:difference in construction}).  Note that Lemma~{3.2} of \cite{MR2018826} should use the arithmetic Frobenius instead of the geometric one; the contents of that lemma have been reproved below in Example~\ref{Ex:finite Drinfeld}.
\end{proof}

\begin{example} \label{Ex:finite Drinfeld}
Let $\phi\colon A\to L[\tau]$ be a Drinfeld module of rank $n$ and $L$ a finite field.   The group $W_L$ is cyclic and generated by the automorphism $\Frob_L \colon x\mapsto x^{|L|}$.  We have $L^\sep=\kbar$, and hence $u:=1$ satisfies the condition of Lemma~\ref{L:u facts}(\ref{I:u fact i}).   Thus $\rho_\infty(\sigma) = \sigma(u)\tau^{\deg \sigma}u^{-1}=\tau^{\deg(\sigma)}$ for all $\sigma\in W_L$, and in particular, $\rho_{\infty}(\Frob_L)=\tau^{[L:k]}$.   Note that $\pi:=\tau^{[L:k]}$ belongs to $\End_L(\phi)$.  

Let $E$ be the subfield of $\End_{L}(\phi)\otimes_A F$ generated by $F$ and $\pi$.  Let $f_\phi \in F[T]$ be the minimal polynomial of $\pi$ over $F$.  The \defi{characteristic polynomial} $P_\phi(T)$ of $\pi$ is the degree $n$ polynomial that is a power of $f_\phi(T)$.   

 By \cite{MR0439758}*{Prop.~2.1}, $E\otimes_F F_\infty$ is a field and hence $f_\phi$ is also the minimal polynomial of $\pi$ over $F_\infty$.  The characteristic polynomial of the $F_\infty$-linear map $D_\phi \to D_\phi,\, a\mapsto \pi a$ is thus a power of $f_\phi$.    This implies that the degree $n$ polynomial $\det(TI - \pi)$ is a power of $f_\phi$, and hence equals $P_\phi(T)$.  Therefore,
\[
P_\phi(T)=\det(TI - \rho_{\infty}(\Frob_L)).
\]
\end{example}

\subsection{Proof of Lemma~\ref{L:u facts}} \label{SS:facts proofs}
Fix a uniformizer $\pi$ of $F_\infty$.  There is a unique embedding $\iota\colon F_\infty \to \kbar\twistedLaurent$ of rings that satisfies the following conditions:
\begin{itemize}
\item
$\iota(x)=x$ for all $x\in \FF_\infty$,
\item $\iota(\pi)=\tau^{-nd_\infty}$,
\item $\ord_{\tau^{-1}}(\iota(x))=nd_\infty \ord_\infty(x)$ for all $x\in F_\infty^\times$.
\end{itemize}
Let $k_{d_\infty}$ and $k_{nd_\infty}$ be the degree $d_\infty$ and $nd_\infty$ extensions of $k$, respectively, in $\kbar$.  We have $\iota(F_\infty)=k_{d_\infty}(\!( \tau^{-nd_\infty})\!)$.   Let $D_\iota$ be the centralizer of $\iota(F_\infty)$ in $\Lbar\twistedLaurent$; it is an $F_\infty$-algebra via $\iota$.   Using that $k_{d_\infty}$ and $\tau^{nd_\infty}$ are in $\iota(F_\infty)$, we find that $D_\iota=k_{nd_\infty}(\!(\tau^{-d_\infty})\!)$.   One can verify that $D_\iota$ is a central $F_\infty$-division algebra with invariant $-1/n$.  \\

By Lemma~\ref{L:u exists}, there is a series $u \in \Lbar\twistedLaurent^\times$ with coefficients in $L^\sep$ such that $u^{-1}\phi(F_\infty) u \subseteq \kbar\twistedLaurent$.   Take any $v\in \Lbar\twistedLaurent^\times$ that satisfies $v^{-1} \phi(F_\infty) v \subseteq \kbar\twistedLaurent$.  By \cite{MR2018826}*{Lemma~2.3}, there exist $w_1$ and $w_2\in \kbar\twistedseries^\times$ such that
\[
\iota(x) = w_1^{-1}(u^{-1} \phi_x u) w_1 \quad \text{ and }\quad \iota(x) = w_2^{-1}(v^{-1} \phi_x v) w_2
\]
for all $x\in F_\infty$.  So for all $x\in F_\infty$, we have $(uw_1)  \iota(x) (uw_1)^{-1} =\phi_x = (v w_2)  \iota(x) (v w_2)^{-1}$ and hence
\[
(w_2^{-1} v^{-1} u w_1) \iota(x) (w_2^{-1} v^{-1} u w_1)^{-1} = \iota(x).
\]
Therefore $w_2^{-1} v^{-1} u w_1$ belongs to $D_\iota\subseteq \kbar\twistedLaurent$, and hence $v=uw$ for some $w\in \kbar\twistedLaurent^\times$.  The coefficients of $v$ lie in $L^\sep$ since the coefficients of $u$ lie in $L^\sep$ and $w$ has coefficients in the perfect field $\kbar\subseteq L^\sep$.  This completes the proof of (i). 

We have shown that the series $g:= uw_1 \in \Lbar\twistedLaurent$ satisfies $\iota(x)=g^{-1} \phi_x g$ for all $x\in F_\infty$.  The map $D_\phi \to D_\iota, \, f \mapsto g^{-1} f g$ is an isomorphism of $F_\infty$-algebras.   Therefore, $D_\phi$ is also a central $F_\infty$-division algebra with invariant $-1/n$; this proves (ii).

Take any $\sigma\in W_L$.   Since $w$ has coefficients in $\kbar$, we have $\sigma(w)=\tau^{\deg(\sigma)} w \tau^{-\deg(\sigma)}$ and hence
\begin{align*}
\sigma(v) \tau^{\deg(\sigma)} v^{-1} &= \sigma(uw)\tau^{\deg(\sigma)} (uw)^{-1}\\
&= \sigma(u) \sigma(w) \tau^{\deg(\sigma)} w^{-1} u^{-1}\\
& = \sigma(u) (\tau^{\deg(\sigma)} w \tau^{-{\deg(\sigma)}})\tau^{\deg(\sigma)} w^{-1} u^{-1}\\
&=\sigma(u) \tau^{\deg(\sigma)} u^{-1}.
\end{align*}
This proves that $\rho_\infty(\sigma):=\sigma(u) \tau^{\deg(\sigma)} u^{-1}$ is independent of the initial choice of $u$.  

To complete the proof of (iii), we need only show that $\rho_\infty(\sigma)$ commutes with $\phi(A)$.   We will now prove (vi), which says that $\rho_\infty(\sigma)$ commutes with the even larger ring $\End_L(\phi)$.   Take any non-zero $f\in \End_L(\phi)$.  Since $f$ commutes with $\phi(A)$, and hence with $\phi(F_\infty)$, we have $(fu)^{-1}\phi(F_\infty)(fu) \subseteq \kbar\twistedLaurent$.   Since $\rho_\infty(\sigma)$ does not depend on the choice of $u$, we have
\[
\rho_\infty(\sigma) = \sigma(fu) \tau^{\deg(\sigma)} (fu)^{-1} = \sigma(f)\sigma(u) \tau^{\deg(\sigma)} u^{-1} f^{-1} = \sigma(f) \rho_\infty(\sigma) f^{-1}.
\]
Since $f$ has coefficients in $L$, we deduce that $\rho_\infty(\sigma) f = f \rho_\infty(\sigma)$, as desired.

For part (iv), note that
\begin{align*}
\ord_{\tau^{-1}}(\rho_\infty(\sigma)) &= \ord_{\tau^{-1}}(\sigma(u)) + \ord_{\tau^{-1}}(\tau^{\deg(\sigma)}) - \ord_{\tau^{-1}}(u)\\
&= \ord_{\tau^{-1}}(u) -\deg(\sigma) - \ord_{\tau^{-1}}(u)=-\deg(\sigma).
\end{align*}

It remains to prove part (v).  We first show that $\rho_\infty$ is a group homomorphism.  For $\sigma_1,\sigma_2\in W_L$, we have
\begin{align*}
\rho_\infty(\sigma_1 \sigma_2) = (\sigma_1\sigma_2)(u) \tau^{\deg(\sigma_1\sigma_2)}  u^{-1}= \sigma_1(\sigma_2(u)) \tau^{\deg(\sigma_1)} \sigma_2(u)^{-1}\cdot \sigma_2(u) \tau^{\deg(\sigma_2)}u^{-1}  = \rho_\infty(\sigma_1)\rho_\infty(\sigma_2).
\end{align*}  
We have used part (iii) along with the observation that if $u^{-1}\phi(A)u \subseteq \kbar\twistedLaurent$, then $\sigma_2(u)^{-1}\phi(A)\sigma_2(u) \subseteq \kbar\twistedLaurent$.   

Finally, we prove that $\rho_\infty$ is continuous.  By Lemma~\ref{L:u exists}, we may assume that $u$ is of the form $\sum_{i=0}^\infty a_i \tau^{-i}$ with $a_0\neq 0$.  Let $\OO_{D_\phi}$ be the valuation ring of $\ord_{\tau^{-1}}\colon D_\phi \to \ZZ\cup \{+\infty\}$; it is a local ring.  By part (iv), we need only show that the homomorphism $\Gal(L^\sep/L\kbar) \xrightarrow{\rho_\infty} \OO_{D_\phi}^\times$ is continuous.  It thus suffices to show that for each $j\geq 1$, the homomorphism
\[
\beta_j \colon  \Gal(L^\sep/L\kbar) \xrightarrow{\rho_\infty} \OO_{D_\phi}^\times \to (\OO_{D_\phi} / \pi^j \OO_{D_\phi})^\times
\]
has open kernel, where $\pi$ is a fixed uniformizer of $F_\infty$.  For each $\sigma \in  \Gal(L^\sep/L\kbar)$, we have $\rho_\infty(\sigma)= \sigma(u) u^{-1}$.  One can check that $\beta_j(\sigma)=1$, equivalently $\ord_{\tau^{-1}}(\rho_\infty(\sigma)-1)\geq \ord_{\tau^{-1}}(\phi_\pi^j)=nd_\infty j$, if and only if $\ord_{\tau^{-1}}(\sigma(u) u^{-1} - 1)=\ord_{\tau^{-1}}(\sigma(u)-u)$ is at least $nd_\infty j$.   Thus the kernel of $\beta_j$ is $\Gal(F^\sep/L_j)$ where $L_j$ is the finite extension of $L\kbar$ generated by the set $\{a_i\}_{0\leq i< nd_\infty j}$.

\subsection{Yu's construction} \label{SS:difference in construction}
Let us relate our representation $\rho_\infty$ to that given by J.K.~Yu in \cite{MR2018826}.  Assume that $L$ is perfect.  Let $\iota\colon F_\infty\to \kbar\twistedLaurent$ be the embedding of \S\ref{SS:facts proofs}.  Choose a series $u_0\in\Lbar\twistedLaurent^\times$ for which $\iota(x)=u_0\phi_xu_0^{-1}$ for all $x\in F_\infty$.  The representation defined in \cite{MR2018826}*{\S2.5} is
\[
\varrho_\infty\colon W_L \to D_\iota^\times,\quad \sigma\mapsto u_0 \sigma(u_0)^{-1}\tau^{\deg(\sigma)}
\]
where $D_\iota$ is the central $F_\infty$-division algebra with invariant $-1/n$ described at the beginning of \S\ref{SS:facts proofs}.  The connection with our representation is that
\[
\rho_\infty(\sigma)= \sigma(u_0^{-1}) \tau^{\deg(\sigma)} (u_0^{-1})^{-1} = \sigma(u_0)^{-1} \tau^{\deg(\sigma)} u_0 = u_0^{-1} \varrho_\infty(\sigma) u_0
\]
for all $\sigma\in W_L$.   A different choice of $u_0$ will change $\varrho_\infty$ by an inner automorphism of $D_\iota^\times$.  (For $L$ not perfect, one can construct $\rho_\infty\colon W_{L^\perf}\to D_\iota^\times$ as above, and then use the natural isomorphism $W_L=W_{L^\perf}$.)

\subsection{Aside: Formal modules}
Let us quickly express the above construction in terms of \emph{formal modules}; this will not be needed elsewhere.  Let $\phi\colon A \to L[\tau]$ be a Drinfeld module of rank $n$ and assume that $L$ is perfect.  Then $\phi$ extends uniquely to a homomorphism $\phi\colon F_\infty \hookrightarrow L\twistedLaurent$ that satisfies (\ref{E:degrees}) for all non-zero $x\in F_\infty$.    In particular, restricting to $\OO_\infty$ defines a homomorphism $\OO_\infty \to L\twistedseries$.

To each formal sum $f=\sum_{i\in \ZZ} a_i \tau^{i}$ with $a_i\in L$, we define its \defi{adjoint} by $f^*=\sum_{i\in \ZZ} a_i^{1/q^i} \tau^{-i}$.   For $f_1,f_2\in L\twistedseries$, we have $(f_1f_2)^*=f_2^* f_1^*$ and $(f_1^*)^*=f_1$.  Define the map 
\[
\varphi\colon \OO_\infty \to L[\![\tau]\!],\quad x \mapsto \phi_x^*.
\]   
Using that $\OO_\infty$ is commutative, we find that $\varphi$ is a homomorphism that satisfies
\[
\ord_{\tau} \varphi(x)= n d_\infty \ord_\infty(x)
\]
for all $x\in \OO_\infty$.    In the language of \cite{MR0384707}*{\S1D}, $\varphi$ is a \defi{formal $\OO_\infty$-module} of \defi{height} $n$.   

If one fixes a formal $\OO_\infty$-module $\iota\colon \OO_\infty \to \kbar[\![\tau]\!]$, then by \cite{MR0384707}*{Prop.~1.7(1)} there is a $v\in \Lbar[\![\tau]\!]^\times$ such that $v^{-1}\varphi(x)v = \iota(x)$ for $x\in \OO_\infty$.   Let $\mathcal{D}_\varphi$ be the centralizer of $\varphi(\OO_\infty)$ in $\Lbar(\!(\tau)\!)$.  By \cite{MR0384707}*{Prop.~1.7(2)}, $\mathcal{D}_\varphi$ is a central $F_\infty$-division algebra with invariant $1/n$ and $\mathcal{D}_\varphi\cap \Lbar[\![\tau]\!]$ is the ring of integers of $\mathcal{D}_\varphi$.   One can then define a continuous homomorphism 
\[
\varrho \colon W_L \to \mathcal{D}_\varphi^\times,\quad \sigma\mapsto \sigma(v)\tau^{\deg(\sigma)} v^{-1}.
\]
For $\sigma\in W_L$, we have $\rho_\infty(\sigma) = (\varrho(\sigma)^*)^{-1}.$      Note that this construction works for any formal $\OO_\infty$-module $\OO_\infty\to L[\![\tau]\!]$ with height $1\leq n<\infty$.

\section{Drinfeld modules of rank 1} \label{SS:rank 1}
Let $\phi\colon A \to L[\tau]$ be a Drinfeld module of rank $1$ with generic characteristic.   For a place $\lambda \neq \infty$ of $F$, the Tate module $T_\lambda(\phi)$ is a free $\OO_\lambda$-module of rank 1.   The Galois action on $T_\lambda(\phi)$ commutes with the $\OO_\lambda$-action, and hence our Galois representation $\rho_\lambda$ is of the form
\[
\rho_\lambda \colon \Gal_L \to \Aut_{\OO_\lambda}(T_\lambda(\phi)) = \OO_\lambda^\times.
\]
For the place $\infty$, we have defined a representation
\[
\rho_\infty \colon W_L \to D_\phi^\times =  F_\infty^\times,
\]
where $D_\phi$ equals $F_\infty$ since it is a central $F_\infty$-division algebra with invariant $-1$.  In this section, we will prove the following proposition, whose corollary is the rank 1 case of Theorem~\ref{T:main theorem}.   

\begin{proposition} \label{P:complex multiplication case}
Let $\phi \colon A \to L[\tau]$ be a Drinfeld module of rank 1 with generic characteristic and assume that $L$ is a finitely generated field. 
Then the group $\big(\prod_{\lambda} \rho_\lambda\big)(W_L)$ is an open subgroup with finite index in  $\big(\prod_{\lambda\neq \infty}\OO_\lambda^\times\big) \times F_\infty^\times$.
\end{proposition}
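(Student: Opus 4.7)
Since $\phi$ has rank $1$, the centralizer $D_\phi$ is a central $F_\infty$-division algebra of invariant $-1$, so $\phi\colon F_\infty\isomto D_\phi$ is a ring isomorphism; likewise $\End_{\Lbar}(\phi)=\phi(A)$ (any $A$-submodule of $\End_{\Lbar}(\phi)$ containing $\phi(A)$ has rank at most $1$). Via this identification, $\alpha:=\prod_\lambda\rho_\lambda$ is a continuous homomorphism from $W_L$ into the abelian locally compact group $G:=\bigl(\prod_{\lambda\neq\infty}\OO_\lambda^\times\bigr)\times F_\infty^\times$, and factors through $W_L^{\operatorname{ab}}$. My plan is to combine Theorem~\ref{T:Pink MT} at the finite places with the $\infty$-adic valuation identity of Lemma~\ref{L:u facts}(iv), and glue the two pieces using class field theory for $L$.

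For each $\lambda\neq\infty$, Theorem~\ref{T:Pink MT} gives that $\rho_\lambda(\Gal_L)$ is commensurable with the centralizer of $\End_{\Lbar}(\phi)$ in $\End_{\OO_\lambda}(T_\lambda(\phi))^\times=\OO_\lambda^\times$; since $\OO_\lambda^\times$ is profinite, this is the same as being open of finite index. For the $\infty$-component, Lemma~\ref{L:u facts}(iv) combined with the $n=1$ case of (\ref{E:degrees}) (which says $\ord_{\tau^{-1}}\phi_x=d_\infty\ord_\infty(x)$ on $D_\phi$) yields $\ord_\infty\rho_\infty(\sigma)=-\deg(\sigma)/d_\infty$. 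Since $\FF_\infty\subseteq\FF_L$ and $\deg(W_L)=[\FF_L:k]\cdot\ZZ$, the induced map $W_L\to F_\infty^\times/\OO_\infty^\times\cong\ZZ$ has finite-index image. Writing $W_L=\langle\sigma_0\rangle\cdot\Gal(L^\sep/L\kbar)$ for a suitable lift $\sigma_0$ of a generator of $\deg(W_L)$, openness of $\alpha(W_L)$ in $G$ reduces to openness of $\alpha(\Gal(L^\sep/L\kbar))$ in the compact group $\prod_\lambda\OO_\lambda^\times\times\OO_\infty^\times$, and finite index will then follow automatically from the finite-index statements in each direction.

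The main obstacle is this last compact-group openness: one must show that those $\sigma$ with trivial Frobenius degree still have open image under $\alpha$, most essentially in the $\OO_\infty^\times$ direction, where individual openness of the $\rho_\lambda$ gives no direct information. This is precisely where class field theory is used. The strict compatibility $\rho_\lambda(\Frob_x)=a_x(\phi)\in F^\times$ for $\lambda\notin\{\lambda_x,\infty\}$ (Lemma~\ref{L:infty independence}), together with Chebotarev density for $L$, identifies the abelian representation $\alpha$, up to finite index, with an explicit idele-class character of $L$ in the spirit of the Carlitz/Hayes description of Example~\ref{Ex:CFT Hayes}. From this identification one reads off an open subgroup of finite index of $G$ lying inside $\alpha(W_L)$, simultaneously controlling all $\lambda$-components (including $\infty$) and completing the proof.
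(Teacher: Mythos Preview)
Your proposal has the right endgame --- reduce the abelian representation $\alpha$ to an idele-class computation --- but there are two concrete gaps between what you have written and an actual proof, and one detour that should be removed.

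\textbf{Missing reduction to $L$ global.} You invoke ``class field theory for $L$'' and an ``idele-class character of $L$'', but $L$ is only assumed finitely generated, not a global function field. The Artin map $\mathbf{A}_L^\times\to W_L^{\ab}$ that you implicitly use is available only when $L$ is global. The paper handles this by a preliminary lemma: every rank~$1$ Drinfeld module of generic characteristic is isomorphic over $L^\sep$ to one defined over the Hilbert class field $H_A$ of $A$, so after a finite extension of $L$ one may replace $\phi$ by a module defined over the abelian extension $H_A/F$ and then work entirely inside global class field theory. Your outline never addresses this step.

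\textbf{The class field theory step is where all the content lies, and your last paragraph does not supply it.} Saying that compatibility and Chebotarev identify $\alpha$ ``with an explicit idele-class character'' is not yet a proof. What character? With values in what group? Why does it have open kernel, and why is its image of finite index? The paper's argument is quite specific: for each $\lambda$ one twists $\widetilde\rho_\lambda$ by the local norm $N_\lambda$ to obtain $\chi_\lambda\colon\mathbf{A}_L^\times\to F_\lambda^\times$, proves (using the valuation identities (\ref{E:HT}) for $\pi_v$) that $\chi_\lambda$ has open kernel and takes values in $F^\times$ independently of $\lambda$, and then reads off that the induced map $C_L\to C_F$ is $\alpha\mapsto N_{L/F}(\alpha)^{-1}$. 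The finite-index conclusion is then exactly the statement $[C_F:N_{L/F}(C_L)]=[L:F]$ from class field theory. None of this structure is visible in your sketch.

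\textbf{The appeal to Theorem~\ref{T:Pink MT} is a red herring.} Once the Hecke-character argument above is carried out, openness and finite index at \emph{all} places, finite and infinite, come out simultaneously from the single norm-map identification. Invoking Pink's theorem place by place buys you nothing: as you yourself note, individual openness of each $\rho_\lambda$ does not give joint openness in the restricted product, so you are forced back to the class field theory computation anyway. (There is also a mild logical awkwardness: Pink's proof of Theorem~\ref{T:Pink MT} in \cite{MR1474696} itself treats the rank~$1$ case via the same Hecke-character method, so quoting it here is close to circular in spirit even if not formally so within this paper.) Drop the first two paragraphs of your plan and put the work into making the class field theory step precise, including the reduction to $L$ global.
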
  

\begin{corollary} \label{C:main theorem n=1}
Let $\phi \colon A \to L[\tau]$ be a Drinfeld module of rank 1 with generic characteristic and assume that $L$ is a finitely generated field.  Then $\rho_\infty(W_L)$ is an open subgroup with finite index in $D_\phi^\times = F_\infty^\times$.
\end{corollary}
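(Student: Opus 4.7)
The corollary is immediate from Proposition~\ref{P:complex multiplication case}; the entire content is captured by projecting the joint image onto its $\infty$-component. The plan is therefore extremely short.

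First, observe that the continuous homomorphism $\rho_\infty\colon W_L \to F_\infty^\times$ factors as the composition
\[
W_L \xrightarrow{\prod_\lambda \rho_\lambda} \Big(\prod_{\lambda\neq\infty}\OO_\lambda^\times\Big) \times F_\infty^\times \xrightarrow{\pi_\infty} F_\infty^\times,
\]
where $\pi_\infty$ is the projection onto the $\infty$-factor. Thus $\rho_\infty(W_L) = \pi_\infty\!\bigl(\bigl(\prod_\lambda \rho_\lambda\bigr)(W_L)\bigr)$.

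Next, I would invoke Proposition~\ref{P:complex multiplication case}: the image $\bigl(\prod_\lambda \rho_\lambda\bigr)(W_L)$ is an open subgroup of finite index in $\bigl(\prod_{\lambda\neq\infty}\OO_\lambda^\times\bigr) \times F_\infty^\times$. Since $\pi_\infty$ is an open continuous surjection of topological groups, the image of an open set is open, so $\rho_\infty(W_L)$ is open in $F_\infty^\times$. Moreover, the general fact that a surjective group homomorphism cannot increase the index of a subgroup yields
\[
[F_\infty^\times : \rho_\infty(W_L)] \;\leq\; \Bigl[\Bigl(\prod_{\lambda\neq\infty}\OO_\lambda^\times\Bigr) \times F_\infty^\times : \Bigl(\prod_\lambda \rho_\lambda\Bigr)(W_L)\Bigr] \;<\; \infty.
\]
Combined with the identification $D_\phi = F_\infty$ from Lemma~\ref{L:u facts}(\ref{I:u fact ii}) (for rank $n=1$, the invariant is $-1$, so $D_\phi$ is forced to equal its center $F_\infty$), this completes the argument.

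There is no real obstacle at this level: all the work is hidden in Proposition~\ref{P:complex multiplication case}, whose proof (via class field theory, as advertised in \S\ref{SS:rank 1}) is the genuine mathematical content. The corollary itself is a one-line formal consequence.
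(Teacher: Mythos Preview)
Your proposal is correct and matches the paper's intent: the paper states the corollary immediately after Proposition~\ref{P:complex multiplication case} without a separate proof, since it follows by projecting onto the $\infty$-factor exactly as you describe. The identification $D_\phi = F_\infty$ for rank~$1$ is also noted in the paper just before the proposition.
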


\subsection{Proof of Proposition~\ref{P:complex multiplication case}}   Since $\phi$ has generic characteristic, it induces an embedding $F\to L$ that we view as an inclusion.    The following lemma allows us to reduce to the case where $L$ is a global function field and $L/F$ is an abelian extension.

\begin{lemma}  
If Proposition~\ref{P:complex multiplication case} holds in the special case where $L$ is a finite separable abelian extension of $F$, then the full proposition holds.
\end{lemma}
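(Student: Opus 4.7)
The plan is to use that, up to isomorphism after passing to a finite separable extension of $L$, every rank~$1$ Drinfeld module with generic characteristic descends to one defined over a fixed finite abelian extension of $F$. Combined with the observation that in rank $1$ all the representations $\rho_{\phi,\lambda}$ land in abelian groups, so that conjugation by an isomorphism acts trivially, this will let me transport the assumed special case of Proposition~\ref{P:complex multiplication case} from the abelian base to an arbitrary finitely generated $L$.

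The first step is descent. By the Drinfeld--Hayes classification of rank $1$ Drinfeld modules (see \cite{MR0330106} and its generalizations), there is a finite abelian extension $H/F$ inside $F^{\sep}$ and finitely many rank $1$ Drinfeld modules $\phi^{(1)},\ldots,\phi^{(h)}\colon A\to H[\tau]$ such that every rank $1$ Drinfeld module with generic characteristic over $\Lbar$ is isomorphic over $\Lbar$ to exactly one of them. In particular there exist some $\phi':=\phi^{(i)}$ and a unit $u\in\Lbar^{\times}$ with $u\,\phi_a\,u^{-1}=\phi'_a$ for all $a\in A$. Fixing any non-constant $a\in A$ and comparing top $\tau$-coefficients gives $u^{q^{h_a}-1}=b_{h_a}/b'_{h_a}\in(LH)^{\times}$, where $h_a$ denotes the $\tau$-degree of $\phi_a$; since $\gcd(q^{h_a}-1,p)=1$ this is a separable polynomial equation, so $u\in L^{\sep}$, and $L_1:=L(u,H)$ is a finite separable extension of $L$ containing $H$.

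Next I transport the representations. For $\lambda\neq\infty$, the scalar $u\in L_1^{\times}$ induces a $\Gal_{L_1}$-equivariant isomorphism $T_\lambda(\phi)\cong T_\lambda(\phi')$ of free rank-one $\OO_\lambda$-modules; comparing the resulting characters into the abelian group $\OO_\lambda^{\times}$ gives $\rho_{\phi,\lambda}|_{W_{L_1}}=\rho_{\phi',\lambda}\circ\operatorname{res}$, where $\operatorname{res}\colon W_{L_1}\to W_H$ is restriction. At $\lambda=\infty$, if $v\in\Lbar\twistedLaurent^{\times}$ satisfies $v^{-1}\phi'(A)v\subseteq\kbar\twistedLaurent$, then $w:=u^{-1}v$ satisfies $w^{-1}\phi(A)w\subseteq\kbar\twistedLaurent$; the identity $\rho_{\phi,\infty}|_{W_{L_1}}=\rho_{\phi',\infty}\circ\operatorname{res}$ then follows directly from the construction in \S\ref{S:construction}, since $u$ is fixed by $W_{L_1}$ and the cocycle values lie in the center $F_\infty^{\times}$ of $D_{\phi'}$. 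Finally, since $L_1$ is finitely generated over $k$ and $[H:F]<\infty$, the algebraic closure $\tilde H$ of $H$ in $L_1$ is finite over $H$ and the extension $L_1/\tilde H$ is regular; hence $\operatorname{res}$ surjects $W_{L_1}$ onto $W_{\tilde H}$, so $\operatorname{res}(W_{L_1})$ has finite index $[\tilde H:H]$ in $W_H$.

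Putting these together, the assumed special case of Proposition~\ref{P:complex multiplication case} applied to $\phi'$ over $H$ yields that $(\prod_\lambda\rho_{\phi',\lambda})(W_H)$ is open of finite index in $G:=(\prod_{\lambda\neq\infty}\OO_\lambda^{\times})\times F_\infty^{\times}$; hence its finite-index subgroup $(\prod_\lambda\rho_{\phi',\lambda})(\operatorname{res}(W_{L_1}))=(\prod_\lambda\rho_{\phi,\lambda})(W_{L_1})$ is also open of finite index in $G$, and since $[W_L:W_{L_1}]\leq[L_1:L]<\infty$, the image $(\prod_\lambda\rho_{\phi,\lambda})(W_L)$ is open of finite index in $G$ as well. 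The main obstacle here is the descent step: the assertion that any rank $1$ Drinfeld module with generic characteristic becomes, up to $\Lbar$-isomorphism, defined over a \emph{finite abelian} (not merely finite) extension of $F$. This is the analogue of complex multiplication in the function field setting and is provided by the Drinfeld--Hayes theory; everything else is formal bookkeeping exploiting the commutativity of the target groups.
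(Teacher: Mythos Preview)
Your proof is correct and follows essentially the same approach as the paper: both use the Drinfeld--Hayes theory to descend $\phi$ (up to $\Lbar$-isomorphism) to a rank~$1$ Drinfeld module $\phi'$ over the Hilbert class field $H_A$ (your $H$), pass to a finite separable extension $L_1$ of $L$ over which $\phi\cong\phi'$, and then compare images. The paper compresses the endgame into the single phrase ``replacing $H_A$ by the finitely generated extension $L'$,'' whereas you spell out explicitly the restriction map $\operatorname{res}\colon W_{L_1}\to W_H$ and why its image has finite index via the intermediate field $\tilde H$; you also make explicit why the rank~$1$ (hence abelian) targets force the transported characters to be literally equal rather than merely conjugate.

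One small terminological point: you assert that $L_1/\tilde H$ is \emph{regular}, but there is no separability hypothesis on $L/F$, so this need not hold. What you actually need---and what is true by construction of $\tilde H$ as the algebraic closure of $H$ in $L_1$---is that $\tilde H$ is \emph{separably} closed in $L_1$; this alone already gives the surjectivity of $\operatorname{res}\colon W_{L_1}\twoheadrightarrow W_{\tilde H}$, since for any finite Galois $K/\tilde H$ one has $K\cap L_1=\tilde H$ and hence $\Gal(KL_1/L_1)\cong\Gal(K/\tilde H)$.
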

\begin{proof}
Let $H_A$ be the maximal unramified abelian extension of $F$ in $F^\sep$ for which the place $\infty$ splits completely; it is a finite abelian extension of $F$.    Choose an embedding $H_A\subseteq L^\sep$.    By \cite{MR1196509}*{\S15} (and our generic characteristic and rank 1 assumptions on $\phi$), there is a Drinfeld module $\phi'\colon A \to H_A[\tau]$ such that $\phi$ and $\phi'$ are isomorphic over $L^\sep$ (since $L$ is a finitely generated extension of $F$, we can choose an embedding of $L$ into the field $\mathbf{C}$ of loc.~cit.).    Moreover, there is a finite extension $L'$ of $LH_A$ such that $\phi$ and $\phi'$ are isomorphic over $L'$.    Therefore, $(\prod_{\lambda} \rho_{\phi,\lambda})(W_{L'})$ and $(\prod_{\lambda} \rho_{\phi',\lambda})(W_{L'})$ are equal in $(\prod_{\lambda\neq\infty} \OO_\lambda^\times) \times F_\infty^\times$.

By the hypothesis of the lemma, we may assume that $(\prod_{\lambda} \rho_{\phi',\lambda})(W_{H_A})$ is an open subgroup of finite index in $(\prod_{\lambda\neq\infty} \OO_\lambda^\times) \times F_\infty^\times$.   Replacing $H_A$ by the finitely generated extension $L'$, we find that $(\prod_{\lambda} \rho_{\phi',\lambda})(W_{L'})$ is still an open subgroup of finite index in $(\prod_{\lambda\neq\infty} \OO_\lambda^\times) \times F_\infty^\times$ (though possibly of larger index).   Therefore, $(\prod_{\lambda} \rho_{\phi,\lambda})(W_{L})$ contains $(\prod_{\lambda} \rho_{\phi,\lambda})(W_{L'})=(\prod_{\lambda} \rho_{\phi',\lambda})(W_{L'})$ which is open and of finite index in $(\prod_{\lambda\neq\infty} \OO_\lambda^\times) \times F_\infty^\times$.
\end{proof}

By the above lemma, we may assume without loss of generality that $L$ is a finite separable and abelian extension of $F$.   The benefit of $L$ being a global function field is that we will be able to use class field theory.  Since $\rho_\lambda|_{W_L}$ is continuous with commutative image, it factors through the maximal abelian quotient $W_L^\ab$ of $W_L$.  Let $\bfA_L^\times$ be the group of ideles of $L$.   For each place $\lambda$ of $F$, we define the continuous homomorphism
\[
\widetilde{\rho}_\lambda \colon\bfA_L^\times \to W_L^\ab \xrightarrow{\rho_\lambda} F_\lambda^\times
\]
where the first homomorphism is the Artin map of class field theory.   The homomorphism $\widetilde{\rho}_\lambda$ is trivial on $L^\times$, and has image in $\OO_\lambda^\times$ when $\lambda\neq \infty$.   Define $L_\lambda:= L\otimes_F F_\lambda$ and let $N_\lambda \colon L_\lambda \to F_\lambda$ be the corresponding norm map.  Define the continuous homomorphism 
\[
\chi_\lambda \colon  \bfA_L^\times \to F_\lambda^\times,\quad \alpha\mapsto\widetilde{\rho}_\lambda(\alpha) N_\lambda(\alpha_\lambda)
\] 
where $\alpha_\lambda$ is the component of $\alpha$ in $L_\lambda^\times = \prod_{v | \lambda} L_v^\times$.

Let $S$ be the set of places of $L$ for which $\phi$ has bad reduction or which lie over $\infty$.   For $v\notin S$, let $\lambda_v$ be the place of $F$ lying under $v$.    For each place $v\notin S$ of $L$, define $\pi_v := \rho_\infty(\Frob_v)$.  By Lemma~\ref{L:infty independence}, $\pi_v$ belongs to $F^\times$; it also equals $\rho_\lambda(\Frob_v)$ for all $\lambda\neq \lambda_v$.    For each place $v\notin S$ of $L$ and $\lambda$ of $F$, we have 
\begin{equation} \label{E:HT}
\ord_{\lambda}(\pi_v) = \begin{cases}
	[\FF_v:\FF_{\lambda_v}]  & \text{ if $\lambda=\lambda_v$}, \\
	-[\FF_v:\FF_{\infty}]  & \text{ if $\lambda=\infty$}, \\
      0 & \text{otherwise},
\end{cases}
\end{equation} 
cf.~\cite{MR0439758}*{Proposition~2.1}.
We now show that $\chi_\lambda$ is independent of $\lambda$.

\begin{lemma} \label{L:Hecke character}
There is a unique character $\chi \colon  \bfA_L^\times \to F^\times$ that satisfies the following conditions:
\begin{alphenum}
\item $\ker(\chi)$ is an open subgroup of $ \bfA_L^\times$.
\item If $\alpha\in L^\times$, then $\chi(\alpha) = N_{L/F}(\alpha)$.
\item If $\alpha=(\alpha_v)$ is an idele with $\alpha_v =1$ for $v\in S$, then $\chi(\alpha) = \prod_{v \not\in S} \pi_v^{\ord_v(\alpha_v)}$.
\end{alphenum}
For every place $\lambda$ of $F$, we have $\chi_\lambda(\alpha)=\chi(\alpha)$ for all $\alpha\in \bfA_L^\times$.  
\end{lemma}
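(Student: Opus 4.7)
The plan is to prove that $\chi_\lambda$, for each fixed place $\lambda$ of $F$, already takes values in $F^\times$ and satisfies (a)--(c); the statement that $\chi_\lambda$ is independent of $\lambda$ then follows from uniqueness.

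For uniqueness, suppose $\chi$ and $\chi'$ both satisfy (a)--(c). Their ratio $\eta := \chi/\chi'$ is trivial on $L^\times$ (by (b)) and on $U_S := \{\alpha \in \bfA_L^\times : \alpha_v = 1 \text{ for } v \in S\}$ (by (c)), and has open kernel (by (a)). Hence $\eta$ descends to a continuous character of $\bfA_L^\times/(L^\times U_S)$ with open kernel, which factors through a finite quotient (a ray class group of $L$ modulo $S$-conductors); the compatibility conditions forced by intersecting $L^\times$ with open neighborhoods inside $U_S$, together with the open-kernel hypothesis, then force $\eta = 1$.

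For existence, I would verify that $\chi_\lambda$ satisfies (b) and (c) with values in $F^\times$. Property (b): for $\beta \in L^\times$, global reciprocity gives $\widetilde{\rho}_\lambda(\beta) = 1$, and the decomposition $L \otimes_F F_\lambda = \prod_{v | \lambda} L_v$ combined with the product formula gives $N_\lambda(\beta) = N_{L/F}(\beta)$, so $\chi_\lambda(\beta) = N_{L/F}(\beta) \in F^\times$. For (c), by multiplicativity and continuity it suffices to evaluate $\chi_\lambda$ on (i) a uniformizer idele $\alpha^{(v)}$ at $v \notin S$, and (ii) a unit idele at $v \notin S$. When $\lambda \neq \lambda_v$, Lemma~\ref{L:infty independence} together with its $\lambda$-adic analog cited in the introduction from \cite{MR1196527} gives $\widetilde{\rho}_\lambda(\alpha^{(v)}) = \rho_\lambda(\Frob_v) = \pi_v$, and $\widetilde{\rho}_\lambda$ is trivial on $\OO_v^\times$ by unramifiedness at $v$; combined with $N_\lambda(\alpha_\lambda) = 1$, this yields $\chi_\lambda(\alpha^{(v)}) = \pi_v \in F^\times$ and $\chi_\lambda$ trivial on units at $v$.

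The main technical obstacle is the case $\lambda = \lambda_v$. The essential input is the local CM relation for rank 1 Drinfeld modules at a prime of good reduction: on the inertia subgroup at $v$, the composition of the local Artin map with $\rho_\lambda$ equals $\OO_v^\times \to \OO_\lambda^\times$, $x \mapsto N_{L_v/F_\lambda}(x)^{-1}$, and a uniformizer lifts under $\rho_\lambda \circ \mathrm{Art}_v$ to $\pi_v \cdot N_{L_v/F_\lambda}(\varpi_v)^{-1}$. This local formula is essentially due to Hayes (\cite{MR0330106}); it can also be read off from the formal module description at the end of \S\ref{S:construction}, which at a place $v \mid \lambda$ of good reduction realizes $T_\lambda(\phi)$ as the Tate module of the height-one formal $\OO_\lambda$-module attached to $\phi$, forcing the stated local reciprocity by Lubin--Tate theory. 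Granted this, case (i) gives $\chi_\lambda(\alpha^{(v)}) = \pi_v \cdot N_{L_v/F_\lambda}(\varpi_v)^{-1} \cdot N_{L_v/F_\lambda}(\varpi_v) = \pi_v$ and case (ii) gives $\chi_\lambda(\alpha) = N_{L_v/F_\lambda}(\alpha_v)^{-1} \cdot N_{L_v/F_\lambda}(\alpha_v) = 1$, both in $F^\times$. This verifies (c); openness of the kernel (a) follows from the analogous local analysis at the finitely many places $v \in S$, which shows that $\chi_\lambda$ is trivial on an explicit open neighborhood of $1$ in $\bfA_L^\times$. Setting $\chi := \chi_\lambda$ for any $\lambda$ yields existence, and uniqueness closes the loop.
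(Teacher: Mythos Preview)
Your overall strategy---show each $\chi_\lambda$ already satisfies (a)--(c) with values in $F^\times$, then appeal to uniqueness---is sound, but you take a harder road than the paper and your justification of the key step is shaky.

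The delicate point, as you recognize, is the case $v\mid\lambda$. You invoke a local Lubin--Tate reciprocity law for rank~1 Drinfeld modules at a place of good reduction. This statement is true, but your references do not establish it: Hayes in \cite{MR0330106} treats only the Carlitz module over $k(t)$, and the formal-module aside at the end of \S\ref{S:construction} concerns the formal $\OO_\infty$-module at the place $\infty$, not the formal $\OO_\lambda$-module at a finite place $\lambda$ (these are different constructions). So the crucial local computation is not actually justified in your write-up. Your verification of (a) at the bad places $v\in S$ is similarly hand-waved.

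The paper sidesteps the $v\mid\lambda$ case entirely. Instead of verifying (c) for $\chi_\lambda$ at all $v\notin S$, it first verifies the analogous statement only for $v\notin S_\lambda := S\cup\{v:v\mid\lambda\}$, where the computation is immediate. It then takes an idele $\beta\in\prod_v\OO_v^\times$ whose components at $S_\lambda$ all equal some fixed $b\in L^\times$, writes $\chi_\lambda(\beta)=\chi_\lambda(b)\chi_\lambda(b^{-1}\beta)=N_{L/F}(b)\prod_{v\notin S_\lambda}\pi_v^{-\ord_v(b)}$, and uses the valuation formula (\ref{E:HT}) to show this lies in $k^\times$. Weak approximation and continuity then give $\chi_\lambda(\prod_v\OO_v^\times)\subseteq k^\times$, whence (a), and values in $F^\times$ follow. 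Uniqueness is proved for the weaker property with $S$ replaced by any finite $S'\supseteq S$, which suffices to identify all the $\chi_\lambda$. The upshot is that the paper trades your local Lubin--Tate input for a purely global argument using only (\ref{E:HT}) and weak approximation.
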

\begin{proof}
Fix a place $\lambda$ of $F$.   If $\alpha\in L^\times$, then $\chi_\lambda(\alpha)=N_\lambda(\alpha)= N_{L/F}(\alpha)$ since $\widetilde{\rho}_\lambda$ is trivial on $L^\times$.   Let $S_\lambda$ be those places of $L$ that belong to $S$ or lie over $\lambda$.  For an idele $\alpha\in \bfA_L^\times$ satisfying $\alpha_v=1$ for $v\in S_\lambda$, we have $\chi_\lambda(\alpha) = \widetilde{\rho}_\lambda(\alpha)$ which equals $ \prod_{v \not\in S_\lambda} \rho_{\lambda}(\Frob_v)^{\ord_v(\alpha_v)}$ since $\rho_\lambda$ is unramified outside $S_\lambda$.   Therefore, $\chi_\lambda(\alpha)=\prod_{v \not\in S_\lambda} {\pi_v}^{\ord_v(\alpha_v)}$.

Define $U=\prod_v \OO_v^\times$; it is an open subgroup of $\bfA^\times_L$.  Consider an idele $\beta \in U$ that satisfies $\beta_v= b \in L^\times$ for all $v\in S_\lambda$.   We then have
\[
\chi_\lambda(\beta) = \chi_\lambda(b)\chi_{\lambda}(b^{-1}\beta) = N_{L/F}(b)\prod_{v\notin S_\lambda} \pi_v^{\ord_v(b^{-1}\beta_v)}=N_{L/F}(b)\prod_{v\notin S_\lambda} \pi_v^{-\ord_v(b)},
\]
which is an element of $F^\times$.  Take a place $\lambda'\neq \infty$ of $F$.  By (\ref{E:HT}) and using that $\ord_v(b)=0$ for $v\in S_\lambda$, we have
\begin{align*}
\ord_{\lambda'}\Big(\prod_{v\notin S_\lambda} \pi_v^{\ord_v(b)}\Big) & = \sum_{v|\lambda'}\ord_v(b) \ord_{\lambda'}(\pi_v)\\
&=\sum_{v|\lambda'}[\FF_v:\FF_{\lambda'}]\ord_v(b)\\
&=\sum_{v|\lambda'}\ord_{\lambda'} N_{L_v/F_{\lambda'}}(b) = \ord_{\lambda'} N_{L/F}(b).
\end{align*}
Therefore, $\ord_{\lambda'}(\chi_\lambda(\beta))=0$ for all $\lambda'\neq \infty$, and hence $\chi_\lambda(\beta)$ belongs to $A^\times = k^\times$.   By weak approximation, the ideles $\beta\in U$ with $\beta_v=b \in L^\times$ for all $v\in S_\lambda$ are dense in $U$.  Since $\chi_\lambda$ is continuous, we deduce that $\chi_\lambda(U)\subseteq k^\times$ and hence $\ker(\chi_\lambda)$ is an open subgroup of $\bfA_L^\times$.     The group $\bfA^\times_L/\ker(\chi_\lambda)$ is generated by $L^\times$ and ideles with 1 at the places $v\in S_\lambda$, so $\chi_\lambda$ takes values in $F^\times$.

Define $\chi:=\chi_\infty$.  We have just seen that $\chi$ takes values in $F^\times$ and satisfies conditions (a), (b) and (c).  Now suppose that $\chi'\colon \bfA^\times_L \to F^\times$ is a group homomorphism that satisfies the following conditions:
\begin{itemize}
\item $\ker(\chi')$ is an open subgroup of $ \bfA_L^\times$.
\item If $\alpha\in L^\times$, then $\chi'(\alpha) = N_{L/F}(\alpha)$.
\item There is a finite set $S'\supseteq S$ of places of $L$ such that $\chi'(\alpha) = \prod_{v \not\in S'} {\pi_v}^{\ord_v(\alpha_v)}$ for all ideles $\alpha$ with $\alpha_v=1$ for $v\in S'$.   
\end{itemize}
The character $\chi'$ is determined by its values on the group $\bfA_L^\times/(\ker(\chi')\cap\ker(\chi))$, and this group is generated by $L^\times$ and the ideles with $v$-components equal to $1$ for $v\in S'$.  Since $\chi$ and $\chi'$ agree on such elements, we find that $\chi'=\chi$.   This proves the uniqueness of a character satisfying conditions (a), (b) and (c).   With $\chi'=\chi_\lambda$ and $S'=S_\lambda$, we conclude that $\chi_\lambda=\chi$.
\end{proof}

Let $C_F$ and $C_L$ be the idele class groups of $F$ and $L$, respectively.  The natural quotient map $\big(\prod_{\lambda\neq \infty}\OO_\lambda^\times\big) \times F_\infty^\times \to C_F$ has kernel $k^\times$ and its image is an open subgroup of finite index in $C_F$.   Since the $\widetilde{\rho}_\lambda$ are trivial on $L^\times$, we can define a homomorphism $f\colon C_L \to C_F$ that takes the idele class containing $\alpha\in \bfA_L$ to the idele class of $C_F$ containing $(\widetilde{\rho}_\lambda(\alpha))_\lambda$.     To prove the proposition, it suffices to show that the image of $f$ is open with finite index in $C_F$.   By the definition of the $\chi_\lambda$ and Lemma~\ref{L:Hecke character}, we have
\[
(\widetilde{\rho}_\lambda(\alpha))_\lambda= (\chi_\lambda (\alpha)N_\lambda(\alpha_\lambda)^{-1})_\lambda=\chi(\alpha)( N_\lambda(\alpha_\lambda)^{-1})_\lambda
\]
Therefore, $f(\alpha)=N_{L/F}(\alpha)^{-1}$ for all $\alpha \in C_L$, where $N_{L/F}\colon C_L \to C_F$ is the norm map.   Class field theory tells us that $N_{L/F}(C_L)$ is an open subgroup of $C_F$ and the index $[C_F:N_{L/F}(C_L)]$ equals $[L:F]$; the same thus holds for $f$.

\begin{remark} \label{R:CFT Hayes}
Consider the special case where $A=k[t]$, $F=k(t)$, and $\phi\colon k[t]\to F[\tau]$ is the Carlitz module of Example~\ref{Ex:CFT Hayes}.  As noted in Example~\ref{Ex:CFT Hayes}, we have a continuous homomorphism
\[
\beta\colon W_F^{\ab} \to \Big(\prod_{\lambda\neq \infty} \OO_\lambda^\times\Big) \times \ang{t}\cdot (1+t^{-1}\OO_\infty)\xrightarrow{\sim} C_F
\]
where the first map is $\prod_{\lambda} \rho_\lambda$ and the second is the quotient map.   Composing $\beta$ with the Artin map of $F$, we obtain a homomorphism $f\colon C_F\to C_F$ which from the calculation above is $f(\alpha)=N_{F/F}(\alpha)^{-1}=\alpha^{-1}$.    Therefore, $W_F^\ab \to C_F,\, \sigma\mapsto \beta(\sigma^{-1})$ is the inverse of the Artin map for $F$ as claimed in Example~\ref{Ex:CFT Hayes}.
\end{remark}

\section{Tate conjecture}  \label{S:Tate}

Let $\phi\colon A \to L[\tau]$ be a Drinfeld module of rank $n$ and let $D_\phi$ be the centralizer of $\phi(A)$ in $\Lbar\twistedLaurent$.  Using the extended map $\phi\colon F_\infty \to L^\perf\twistedLaurent$, we have shown that $D_\phi$ is a central $F_\infty$-division algebra with invariant $-1/n$.   In \S\ref{S:construction}, we  constructed a continuous representation 
\[
\rho_\infty\colon W_L \to D_\phi^\times.
\]
We can view $\End_L(\phi)\otimes_A F_\infty$ as a $F_\infty$-subalgebra of $D_\phi$; it commutes with the image of $\rho_\infty$.  The following $\infty$-adic analogue of the Tate conjecture, says that $\End_L(\phi)\otimes_A F_\infty$ is precisely the centralizer of $\rho_\infty(W_L)$ in $D_\phi$, at least assuming that $L$ is finitely generated and $\phi$ has generic characteristic.

\begin{theorem} \label{T:Tate conjecture}  Let $\phi\colon A \to L[\tau]$ be a Drinfeld module with generic characteristic and $L$ a {finitely generated} field.   Then the centralizer of $\rho_\infty(W_L)$ in $D_\phi$ is $\End_L(\phi)\otimes_A F_\infty$.
\end{theorem}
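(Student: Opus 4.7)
One containment, namely $\End_L(\phi) \otimes_A F_\infty \subseteq \Cent_{D_\phi}(\rho_\infty(W_L))$, is immediate from Lemma~\ref{L:u facts}(vi). The content of the theorem is the opposite inclusion: any element of $D_\phi$ that commutes with the image of $\rho_\infty$ must come from an endomorphism of $\phi$ defined over $L$. I would prove this by reformulating the question in terms of the $A$-motive $M(\phi)$ attached to $\phi$, that is, the twisted polynomial ring $L[\tau]$ viewed as a left $L\otimes_k A$-module on which $A$ acts via $\phi$ on the right and which carries the Frobenius $\tau$. The functor $\phi \mapsto M(\phi)$ is fully faithful, so $\End_L(\phi)$ coincides with the ring of $A$-motive endomorphisms of $M(\phi)$, and a suitable $\infty$-adic localization of this endomorphism ring will describe $\End_L(\phi) \otimes_A F_\infty$.

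The next step is to construct an $\infty$-adic realization $V_\infty$ of $M(\phi)$ that plays, at the place $\infty$, the role of the Tate modules $V_\lambda(\phi)$ at places $\lambda \neq \infty$. Using the series $u$ produced in Lemma~\ref{L:u facts}(i) to conjugate $\phi(F_\infty)$ into $\kbar\twistedLaurent$, one can identify the $\infty$-adic completion of $M(\phi)$ with a natural free rank-one module over $D_\phi$ on which $W_L$ acts continuously via $\rho_\infty$. Under this identification, $\Cent_{D_\phi}(\rho_\infty(W_L))$ becomes the algebra of $W_L$-equivariant $D_\phi$-linear endomorphisms of $V_\infty$. The desired equality thus translates into the $\infty$-adic analogue of the Tate conjecture for $A$-motives: the natural map $\End_L(M(\phi)) \otimes_A F_\infty \to \End_{D_\phi[W_L]}(V_\infty)$ is an isomorphism.

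To establish this $\infty$-adic statement, I would invoke Tamagawa's Tate conjecture for $A$-motives at a well-chosen finite place $\lambda \neq \infty$, which identifies $\End_{F_\lambda[\Gal_L]}(V_\lambda(M(\phi)))$ with $\End_L(\phi) \otimes_A F_\lambda$, and then transfer this isomorphism to the place $\infty$ by exploiting compatibilities between the realizations of $M(\phi)$ at different places (in particular, both sides of the Tate statement are detected by the common object $\End_L(M(\phi))$, and a dimension-count together with Tamagawa's theorem forces the $\infty$-adic map to be an isomorphism as well). The main obstacle will be the precise construction of $V_\infty$ and the verification of its formal properties: because $D_\phi$ is a noncommutative central $F_\infty$-division algebra of invariant $-1/n$, one must carefully track left/right module structures and the Weil group action throughout, and the twist by $D_\phi$ must be handled in a way that is compatible with Tamagawa's (a priori commutative-coefficient) theorem at finite places.
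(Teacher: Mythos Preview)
Your overall strategy---pass to the $A$-motive $M(\phi)$ and reinterpret the centralizer as a Galois-invariant endomorphism algebra at the place $\infty$---is essentially the paper's approach, and the easy containment via Lemma~\ref{L:u facts}(vi) is exactly right. But the key step where you propose to ``invoke Tamagawa's Tate conjecture at a well-chosen finite place $\lambda\neq\infty$ \ldots\ and then transfer this isomorphism to the place $\infty$ by a dimension count'' is a genuine gap. Knowing that $\End_L(\phi)\otimes_A F_\lambda \cong \End_{F_\lambda[\Gal_L]}(V_\lambda(\phi))$ tells you the $F_\lambda$-dimension of the left-hand side, and hence the $F_\infty$-dimension of $\End_L(\phi)\otimes_A F_\infty$; but it tells you nothing about $\dim_{F_\infty}\Cent_{D_\phi}(\rho_\infty(W_L))$. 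Compatibility of the system $\{\rho_\lambda\}$ only matches characteristic polynomials of Frobenii, and there is no mechanism by which the size of the commutant at one place controls the size at another. So the dimension count cannot close.

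What the paper actually does is apply Tamagawa's theorem \emph{directly at the place $\infty$}. After first reducing (Lemma~\ref{L:reduction of Tate to invariants}) the statement to the equality $D_\phi^{\Gal(\Lbar/L)}=\End_L(\phi)\otimes_A F_\infty$, one works in the special case $A=k[t]$ and identifies $D_\phi^{\text{opp}}$ with $\End(\bbar M_\phi)$ where $\bbar M_\phi$ is the base change of the $\tau$-module $M_\phi(t)$ to $\Lbar\laurent$. The problem then becomes showing that
\[
\End(M_\phi(t))\otimes_{k(t)} k\laurent \;\longrightarrow\; \End\bigl(\Lbar\laurent\otimes_{L(t)} M_\phi(t)\bigr)^{\Gal(\Lbar/L)}
\]
is an isomorphism. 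The crucial point is that Tamagawa's theorem, as formulated for restricted $L(t)\{\tau\}$-modules that are \emph{\'etale at $t^{-1}=0$} (equivalently, pure of weight $0$), is already an $\infty$-adic statement. Since $M_\phi(t)$ is pure of weight $1/n$, the endomorphism $\tau$-module $H=\End_{L(t)}(M_\phi(t))\cong M_\phi(t)^\vee\otimes M_\phi(t)$ is pure of weight $0$, and Tamagawa's theorem applied to $H$ gives exactly the displayed isomorphism. The general $A$ is then reduced to $k[t]$ by restriction along $k[t]\hookrightarrow A$. So the missing idea in your plan is not a transfer argument but the recognition that Tamagawa's framework already lives at $\infty$ once one passes to the weight-$0$ object $\End(M_\phi)$.
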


For the rest of the section, assume that $L$ is a {finitely generated} field.   Recall that for a place $\lambda\neq \infty$, the $\lambda$-adic version of the Tate conjecture says that the natural map
\begin{equation} \label{E:lambda-adic Tate}
\End_L(\phi) \otimes_A F_\lambda \to \End_{F_\lambda[\Gal_L]}(V_\lambda(\phi))
\end{equation}
is an isomorphism.  This is a special case of theorems proved independently by Taguchi \cite{MR1286009} and Tamagawa \cite{MR1417504}; we will make use of Tamagawa's more general formulation.  We can give $\End_{F_\lambda}(V_\lambda(\phi))$ a $\Gal_L$-action by $\sigma(f):= \rho_\lambda(\sigma) \circ f \circ \rho_{\lambda}(\sigma)^{-1}$.   That (\ref{E:lambda-adic Tate}) is an isomorphism is equivalent to having $\End_{F_\lambda}(V_\lambda(\phi))^{\Gal_L}=\End_L(\phi)\otimes_A F_\lambda$.   For the $\infty$-adic version, the ring $D_\phi$ has a natural $\Gal(\Lbar/L)$-action and the subring $D_\phi^{\Gal(\Lbar/L)} = \Cent_{L^\perf\twistedLaurent}(\phi(A))$ certainly contains $\End_L(\phi)\otimes_A F_\infty$; we will show that they are equal, and from the following lemma, deduce Theorem~\ref{T:Tate conjecture}.

\begin{lemma} \label{L:reduction of Tate to invariants}
If $D_\phi^{\Gal(\Lbar/L)}=\End_{L}(\phi)\otimes_A F_\infty$, 
then the centralizer of $\rho_\infty(W_L)$ in $D_\phi$ is $\End_L(\phi)\otimes_A F_\infty$.
\end{lemma}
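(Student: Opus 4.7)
The containment $\End_L(\phi)\otimes_A F_\infty \subseteq \Cent_{D_\phi}(\rho_\infty(W_L))$ is already Lemma~\ref{L:u facts}(vi). For the reverse containment, the plan is to promote the defining formula $\rho_\infty(\sigma) = \sigma(u)\tau^{\deg(\sigma)}u^{-1}$ into the Galois-equivariance identity
\[
\sigma(f)\;=\;\rho_\infty(\sigma)\,f\,\rho_\infty(\sigma)^{-1}\qquad (f\in D_\phi,\ \sigma\in W_L),
\]
thereby trading the centralizer condition for a $W_L$-invariance condition, upgrading it to $\Gal_L$-invariance by density, and finally invoking the hypothesis $D_\phi^{\Gal(\Lbar/L)}=\End_L(\phi)\otimes_A F_\infty$.

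To prove the identity, I fix $u$ as in Lemma~\ref{L:u facts}(i). In the proof of Lemma~\ref{L:u facts}(ii) one finds a $w_1\in\kbar\twistedseries^\times$ such that $g:=uw_1$ conjugates $D_\phi$ onto $D_\iota\subseteq\kbar\twistedLaurent$; since $w_1$ normalizes $\kbar\twistedLaurent$, this yields $u^{-1}D_\phi u = w_1 D_\iota w_1^{-1}\subseteq\kbar\twistedLaurent$. Hence every $f\in D_\phi$ can be written as $f=uhu^{-1}$ with $h\in\kbar\twistedLaurent$. The action of $\sigma\in W_L$ on the coefficients of $h$ is by $\Frob^{\deg(\sigma)}$, which inside $\kbar\twistedLaurent$ coincides with conjugation by $\tau^{\deg(\sigma)}$ (because $\tau c\tau^{-1}=c^q$ for $c\in\kbar$), so $\sigma(h)=\tau^{\deg(\sigma)}h\tau^{-\deg(\sigma)}$. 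A direct expansion gives
\[
\rho_\infty(\sigma)\,f\,\rho_\infty(\sigma)^{-1} \;=\; \sigma(u)\,\tau^{\deg(\sigma)}\,h\,\tau^{-\deg(\sigma)}\,\sigma(u)^{-1} \;=\; \sigma(u)\,\sigma(h)\,\sigma(u)^{-1} \;=\; \sigma(f),
\]
establishing the identity.

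With the identity in hand, $f$ commutes with $\rho_\infty(\sigma)$ if and only if $\sigma(f)=f$, so $\Cent_{D_\phi}(\rho_\infty(W_L))=D_\phi^{W_L}$. For any $f\in D_\phi$ the stabilizer $\{\sigma\in\Gal_L:\sigma(f)=f\}$ is closed in $\Gal_L$ (it is the intersection of the open subgroups fixing the individual coefficients of $f$, each algebraic over $L$), while $W_L$ is dense in $\Gal_L$ because $\ZZ$ is dense in $\Gal(\kbar/k)\cong\widehat{\ZZ}$. Hence $D_\phi^{W_L}=D_\phi^{\Gal_L}$, and the hypothesis concludes the argument. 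The delicate point is the verification $u^{-1}D_\phi u\subseteq\kbar\twistedLaurent$; without it one could not cleanly identify the coefficient-wise Galois action on $h$ with conjugation by $\tau^{\deg(\sigma)}$, and the whole reduction would collapse.
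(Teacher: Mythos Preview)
Your proof is correct and follows essentially the same route as the paper's. Both hinge on the fact that $u^{-1}D_\phi u\subseteq\kbar\twistedLaurent$, so that for $f\in D_\phi$ the element $h:=u^{-1}fu$ has coefficients in $\kbar$ and hence $\sigma(h)=\tau^{\deg(\sigma)}h\tau^{-\deg(\sigma)}$; the paper simply starts from the commutation hypothesis and unwinds to $\sigma(f)=f$, whereas you first isolate the general equivariance identity $\sigma(f)=\rho_\infty(\sigma)f\rho_\infty(\sigma)^{-1}$ and then specialize. One small point: when you pass from $D_\phi^{W_L}=D_\phi^{\Gal_L}$ to the hypothesis about $D_\phi^{\Gal(\Lbar/L)}$, it is worth noting (as your own argument implicitly shows via $f=uhu^{-1}$ with $u\in L^\sep\twistedLaurent$ and $h\in\kbar\twistedLaurent$) that $D_\phi\subseteq L^\sep\twistedLaurent$, so $\Gal(\Lbar/L^\sep)$ acts trivially and the two invariant subrings coincide.
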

\begin{proof}
Fix an $f\in D_\phi$ that commutes with $\rho_\infty(W_L)$.   Take any $\sigma\in W_L$.  The series $f$ and $\rho_\infty(\sigma)$ commute, so we have
\[
\sigma(u)\tau^{\deg(\sigma)} u^{-1} \cdot f = f \cdot \sigma(u)\tau^{\deg(\sigma)} u^{-1},
\]
where $u$ is a series as in Lemma~\ref{L:u facts}(\ref{I:u fact i}).  Therefore,
\[
\sigma(u)^{-1} f \sigma(u) = \tau^{\deg(\sigma)} (u^{-1} f u) \tau^{-\deg(\sigma)}= \sigma(u^{-1} f u)
\] 
where the last equality uses that $u^{-1}fu$ has coefficients in $\kbar$.    Since $W_L$ is dense in $\Gal_L$, we have $\sigma(u)^{-1} f \sigma(u) =\sigma(u^{-1} f u)$ for all $\sigma\in \Gal_L$ and hence also for all $\sigma\in \Gal(\Lbar/L)$.    Therefore, $\sigma(u)^{-1} f \sigma(u) = \sigma(u)^{-1} \sigma(f) \sigma(u)$ and hence $\sigma(f)=f$, for all $\sigma\in \Gal(\Lbar/L)$.  So $f$ belongs to $D_\phi^{\Gal(\Lbar/L)}$ and is thus an element of $\End_L(\phi)\otimes_A F_\infty$ by assumption.  This proves that the centralizer of $\rho_\infty(W_L)$ in $D_\phi$ is contained in $\End_{L}(\phi)\otimes_A F_\infty$.; we have already noted that the other inclusion holds.
\end{proof}

The rest of \S\ref{S:Tate} is dedicated to proving Theorem~\ref{T:Tate conjecture}.   To relate our construction with the work of Tamagawa, it will be useful to replace $\phi$ with its corresponding $A$-motive.   We give enough background to prove the theorem; this material will not be needed outside \S\ref{S:Tate}.

\subsection{\'Etale $\tau$-modules}
Let $L$ be an extension field of $k$ (as usual, $k$ is a fixed finite field with $q$ elements).  Let $L\laurent$ be the (commutative) ring of Laurent series in $t^{-1}$ with coefficients in $L$.  Define the ring homomorphism
\[
\sigma \colon L\laurent \to L\laurent,\quad \sum_{i} c_i t^{-i} \mapsto \sum_i c_i^q t^{-i}.
\]   
Let $R$ be a subring of $L\laurent$ that is stable under $\sigma$; for example, $L[t]$, $L(t)$ and $L\laurent$. 

 \begin{definition}
 A \defi{$\tau$-module} over $R$ is a pair $(M,\tau_M)$ consisting of an $R$-module $M$ and a $\sigma$-semilinear map $\tau_M\colon M \to M$ (i.e., $\tau_M$ is additive and satisfies $\tau_M(fm)=\sigma(f)\tau_M(m)$ for all $f\in R$ and $m\in M$).  A morphism of $\tau$-modules is an $R$-module homomorphism that is compatible with the $\tau$ maps. 
\end{definition}

When convenient, we shall denote a $\tau$-module $(M,\tau_M)$ simply by $M$.   We can view $R$ as a $\tau$-module over itself by setting $\tau_R=\sigma|_R$.    For an $R$-module $M$, denote by $\sigma^*(M)$ the scalar extension $R\otimes_{\sigma,R} M$ of $M$ by $\sigma\colon R\to R$. Giving a $\sigma$-semilinear map $\tau_M\colon M\to M$ is thus equivalent to giving an $R$-linear map 
\[
\tau_{M,\lin}\colon \sigma^*(M)\to M
\] 
which we call the \defi{linearization} of $\tau_M$.   We say that a $\tau$-module $M$ over $R$ is \defi{\'etale} if $M$ is a free $R$-module of finite rank and the linearization $\tau_{M,\lin}\colon \sigma^*(M)\to M$ is an isomorphism.  

Let $M_1$ and $M_2$ be $\tau$-modules over $R$.  We define $M_1\otimes_R M_2$ to be the $\tau$-module whose underlying $R$-module is $M_1\otimes_R M_2$ with $\tau$ map determined by $\tau_{M_1 \otimes_R M_2}(m_1\otimes m_2) = \tau_{M_1}(m_1) \otimes \tau_{M_2}(m_2)$.  Now suppose that $M_1$ is \'etale.  Define the $R$-module $H:=\Hom_{R}(M_1,M_2)$.   Let $\tau_H\colon H \to H$ be the $\sigma$-semilinear that corresponds to the $R$-linear map
 \[
 \sigma^*(H)\to H,\quad f\mapsto \tau_{M_2,\lin} \circ f \circ \tau_{M_1,\lin}^{-1},
 \]
 where we are using the natural isomorphism $\sigma^*(H)\cong \Hom_{R}(\sigma^*(M_1), \sigma^*(M_2))$.     The pair $(H,\tau_H)$ is a $\tau$-module over $R$.  If  $M_1$ and $M_2$ are both \'etale over $R$, then $H$ is also \'etale.
 
Suppose that $R\subseteq R'$ are subrings of $L\laurent$ which are stable under $\sigma$.  Let $M$ be a $\tau$-module over $R$.   We can then give $R'\otimes_R M$ the structure of a $\tau$-module over $R'$.  If $M$ is \'etale, then $R'\otimes_R M$ is an \'etale $\tau$-module over $R'$.
 
For a $\tau$-module $M$, let $M^{\tau}$ be the group of $m\in M$ for which $\tau_M(m)=m$; it is a module over the ring $R_0:=\{r\in R: \sigma(r)=r\}$ (for $R=L(t)$ and $L\laurent$, we have $R_0=k(t)$ and $k\laurent$, respectively).      Let $H$ be the $\tau$-module $\Hom_R(M_1,M_2)$ where $M_1$ and $M_2$ are $\tau$-modules and $M_1$ is \'etale;  then $H^\tau$ agrees with the set $\Hom(M_1,M_2)$ of endomorphisms $M_1\to M_2$ of $\tau$-modules.

\subsection{Weights}
Fix a separably closed extension $K$ of $k$.  We shall describe the \'etale $\tau$-modules over $K\laurent$; it turns out that the category of such $\tau$-modules is semisimple, we first define the simple ones.

\begin{definition} \label{D:N-lambda}
Let $\lambda=s/r$ be a rational number with $r$ and $s$ relatively prime integers and $r\geq 1$.  Define the free $K\laurent$-module
\[
N_\lambda := K\laurent e_1 \oplus \cdots \oplus K\laurent e_r.
\]
Let $\tau_\lambda \colon N_\lambda \to N_\lambda$ be the $\sigma$-semilinear map that satisfies $\tau_\lambda(e_i)=e_{i+1}$ for $1\leq i<r$ and $\tau_\lambda(e_r)=t^s e_1$.  The pair $(N_\lambda, \tau_\lambda)$ is an \'etale $\tau$-module over $K\laurent$.
\end{definition}

\begin{proposition}  \label{P:Dieudonne classification}
\begin{romanenum}
\item
If $M$ is an \'etale $\tau$-module over $K\laurent$, then there are unique rational numbers $\lambda_1\leq \lambda_2 \leq \cdots\leq \lambda_m$ such that 
\begin{itemize}
\item $M\cong N_{\lambda_1}\oplus \cdots \oplus N_{\lambda_m}$.   
\item the $t^{-1}$-adic valuations of the roots of the characteristic polynomial of $\tau_M$ expressed on any $K\laurent$-basis of $M$ are $\{-\lambda_i\}_i$, with each $\lambda_i$ counted with multiplicity $\dim N_\lambda$.
\end{itemize}

\item
For $\lambda\in\QQ$, the ring $\End(N_\lambda)$ is a central $k\laurent$-division algebra with Brauer invariant $\lambda$.
\end{romanenum}
\end{proposition}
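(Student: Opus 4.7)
The plan is to prove this as a function-field analogue of the Dieudonn\'e-Manin classification of isocrystals; I treat the two parts of the proposition in turn.

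For part (i), I first address uniqueness. The multiset $\{-\lambda_i\}$ is precisely the Newton polygon (with respect to $\ord_{t^{-1}}$) of the characteristic polynomial of the matrix of $\tau_M$ in any $K\laurent$-basis of $M$. A change of basis by $P \in \GL(M)$ replaces the matrix $A$ by $P^{-1} A \sigma(P)$, and since $\sigma$ preserves $\ord_{t^{-1}}$ entry by entry, the Newton polygon is unchanged. This yields uniqueness together with the second bullet of (i).

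For existence, I would follow the standard two-step strategy. First, construct a slope filtration $0 = M_0 \subset M_1 \subset \cdots \subset M_s = M$ by saturated $\tau$-submodules with isoclinic quotients of strictly increasing slopes (a Harder-Narasimhan-style argument, using that saturated $\tau$-submodules have Newton polygons lying on or above that of $M$). Second, show that each isoclinic piece of slope $\lambda = s/r$ is isomorphic to a direct sum of copies of $N_\lambda$, and that the slope filtration splits. The isoclinic case reduces by tensoring with $N_{-\lambda}$ to the slope-$0$ statement, which amounts to the vanishing of $H^1(\sigma, \GL_m(K\laurent))$ for $K$ separably closed: one produces a $\tau$-fixed basis by successive approximation in the $t^{-1}$-adic topology, using $\sigma$-surjectivity on $K$ to solve an Artin-Schreier-type equation at each step.

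I expect the splitting step to be the main technical obstacle. The obstruction to splitting lives in $\Ext^1$ of $\tau$-modules between isoclinic pieces of different slopes, and its vanishing reduces to solving a $\sigma$-semilinear matrix equation over $K\laurent$ whose associated contraction converges $t^{-1}$-adically precisely because of the positive slope gap; separable closedness of $K$ allows the coefficient-wise polynomial equations to be solved. Combined with the isoclinic case, this yields the decomposition in (i).

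For part (ii), once (i) is in hand, simplicity of $N_\lambda$ is immediate: any nonzero saturated $\tau$-submodule of $N_\lambda$ has Newton polygon consisting solely of copies of $\lambda$, hence by (i) is a direct sum of copies of $N_\lambda$, and a rank comparison forces it to equal $N_\lambda$. Schur then shows $\End(N_\lambda)$ is a division ring. To compute its $k\laurent$-dimension, note that $\End(N_\lambda) = \Hom_{K\laurent}(N_\lambda, N_\lambda)^\tau$, that the inner $\Hom$ is an \'etale $\tau$-module that is isoclinic of slope $0$ and of rank $r^2$ (since tensor product adds slopes), hence by (i) is isomorphic to $N_0^{\oplus r^2}$ with $\tau$-fixed part of $k\laurent$-dimension $r^2$. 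Thus $\End(N_\lambda)$ is a central $k\laurent$-division algebra of degree $r$. The Brauer invariant is then read off the presentation $N_\lambda = K\laurent\{\tau\}/K\laurent\{\tau\}(\tau^r - t^s)$: right multiplication by $\tau$ is a well-defined endomorphism of $N_\lambda$ satisfying $\tau^r = t^s$, so the unique extension of $\ord_{t^{-1}}$ to the division algebra assigns $\tau$ the value $s/r$, identifying the invariant as $s/r = \lambda$ via the standard local classification of division algebras over the local field $k\laurent$.
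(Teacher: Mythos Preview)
The paper does not prove this proposition directly: its proof is a citation to Laumon's \emph{Cohomology of Drinfeld modular varieties}, Appendix~B, together with the observation (credited to Taelman) that Laumon's argument, though stated for a particular field, goes through verbatim for any separably closed $K$. Your sketch is essentially a reconstruction of that standard Dieudonn\'e--Manin argument, so in spirit you are on the same path as the cited reference; there is no genuinely different route here to compare.

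That said, two steps in your outline are thinner than they look. First, your uniqueness argument rests on the claim that the Newton polygon of the characteristic polynomial of the matrix of $\tau_M$ is basis-independent, justified only by ``$\sigma$ preserves $\ord_{t^{-1}}$ entry by entry.'' The conclusion is correct, but this sentence is not a proof: the characteristic polynomial itself genuinely changes under $A \mapsto P^{-1}A\,\sigma(P)$, and the invariance of its Newton polygon is a nontrivial lemma (typically proved via lattices, or via the asymptotics of $\wedge^i\bigl(A\,\sigma(A)\cdots\sigma^{N-1}(A)\bigr)$ as $N\to\infty$, or deduced a posteriori from the classification and Krull--Schmidt). You could either import that lemma or reorganise so that uniqueness comes from simplicity of the $N_\lambda$ together with Krull--Schmidt, which you essentially establish in part~(ii) anyway.

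Second, in your Brauer-invariant computation there is a sign slip and a missing step. Right multiplication by $\tau$ gives an element $\theta\in\End(N_\lambda)$ with $\theta^r=t^s$, so $\ord_{t^{-1}}(\theta)=-s/r$, not $s/r$. Moreover $\theta$ is not in general a uniformizer of the division algebra, so reading off the invariant from $\theta$ alone requires identifying the maximal unramified subfield (the image of $k_r(\!(t^{-1})\!)$ under right multiplication, say), computing the conjugation action of $\theta$ on it, and then carefully matching this against whatever convention for the local invariant map you have fixed. The endpoint is correct, but the sentence you wrote does not yet get you there.
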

\begin{proof}
This follows from \cite{Laumon}*{Appendix~B}; although the proposition is only proved for a particular field $K$, nowhere do the proofs make use of anything stronger then $K$ being separably closed.    This was observed by Taelman in \cite{MR2468475}*{\S5}; his notion of a ``Dieudonn\'e $t$-module'' corresponds with \'etale  $\tau$-modules over $K\laurent$ (in Definition~5.1.1 of loc.~cit.~one should have $K\laurent \sigma(V)=V$).
\end{proof}

We call the rational numbers $\lambda_i$ of Proposition~\ref{P:Dieudonne classification}(i) the \defi{weights} of $M$.   If all the weights of $M$ equal $\lambda$, then we say that $M$ is \defi{pure of weight $\lambda$}.

\begin{lemma} \label{L:lattice criterion}  \cite{MR2468475}*{Prop.~5.14}
Fix a rational number $\lambda=s/r$ with $r$ and $s$ relatively prime integers and $r\geq 1$.  Let $M$ be an \'etale $\tau$-module over $K\laurent$ with $K$ an algebraically closed extension of $k$.  The following are equivalent:
\begin{itemize}
\item 
$M$ is pure of weight $\lambda$.
\item there exists a $K[\![t^{-1}]\!]$-lattice $\Lambda\subseteq M$ such that $\tau_M^r(\Lambda)=t^s\Lambda$.  
\end{itemize}
\end{lemma}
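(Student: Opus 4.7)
The plan is to prove both implications directly from the structure provided by Proposition~\ref{P:Dieudonne classification}, with the reverse direction reducing to a clean determinant calculation.

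For the forward direction, suppose $M$ is pure of weight $\lambda = s/r$, so $M \cong N_\lambda^m$ for some $m \geq 0$. A direct computation from the defining relations $\tau_\lambda(e_i) = e_{i+1}$ (for $i < r$), $\tau_\lambda(e_r) = t^s e_1$, and $\sigma$-semilinearity shows $\tau_\lambda^r(e_i) = t^s e_i$ for every $i$. Hence the standard lattice $\Lambda_0 := \bigoplus_{i=1}^r K[\![t^{-1}]\!] e_i$ satisfies $\tau_\lambda^r(\Lambda_0) = t^s \Lambda_0$, and taking the direct sum across the $m$ copies of $N_\lambda$ produces the required lattice in $M$.

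For the reverse direction, suppose $\Lambda \subseteq M$ satisfies $\tau_M^r(\Lambda) = t^s \Lambda$. First I would reduce to the pure case. Using that $\Hom_\tau(N_\mu, N_{\mu'}) = 0$ for distinct rationals $\mu \neq \mu'$ (a consequence of Proposition~\ref{P:Dieudonne classification}, since $\Hom(N_\mu, N_{\mu'})$ is \'etale of uniform weight $\mu' - \mu \neq 0$ and so has no nonzero $\tau$-fixed element), $M$ decomposes canonically as $\bigoplus_\mu M_\mu$ with each $M_\mu$ pure of weight $\mu$, and each projection $\pi_\mu \colon M \twoheadrightarrow M_\mu$ is a morphism of $\tau$-modules. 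Applying $\pi_\mu$ shows $\pi_\mu(\Lambda)$ is a lattice in $M_\mu$ with $\tau^r(\pi_\mu(\Lambda)) = t^s \pi_\mu(\Lambda)$, so it suffices to show: if $M$ is pure of some weight $\mu = s'/r'$ (in lowest terms) and admits such a lattice, then $\mu = \lambda$.

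So assume $M \cong N_\mu^m$ with $m \geq 1$, and let $\Lambda_0$ be the standard lattice from the forward direction, so $\tau^{r'}(\Lambda_0) = t^{s'}\Lambda_0$, and iterating (using $\sigma(t) = t$) gives $\tau^{rr'}(\Lambda_0) = t^{rs'}\Lambda_0$; the hypothesis similarly yields $\tau^{rr'}(\Lambda) = t^{r's}\Lambda$. Both $\Lambda$ and $\Lambda_0$ are $K[\![t^{-1}]\!]$-lattices of full rank $N := mr'$ in $M$, so choosing bases gives $\Lambda = g \Lambda_0$ for some $g \in \GL_N(K\laurent)$. Applying $\tau^{rr'}$ to $\Lambda = g\Lambda_0$ yields $\tau^{rr'}(\Lambda) = \sigma^{rr'}(g) \cdot t^{rs'} \Lambda_0$, and comparing with $\tau^{rr'}(\Lambda) = t^{r's} g \Lambda_0$ forces $g^{-1}\sigma^{rr'}(g) = t^{r's - rs'} u$ for some $u \in \GL_N(K[\![t^{-1}]\!])$. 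Taking the $t^{-1}$-adic valuation of determinants, the left side has valuation $0$ (since $\sigma$ fixes $t$), while the right side has valuation $-N(r's - rs')$; therefore $r's = rs'$, i.e., $\mu = \lambda$. The main subtle step is justifying the canonical isotypic decomposition, but once $\Hom_\tau(N_\mu, N_{\mu'}) = 0$ is in hand the rest is formal.
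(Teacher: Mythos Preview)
The paper does not prove this lemma at all; it simply cites it as \cite{MR2468475}*{Prop.~5.14} and moves on. So there is no ``paper's proof'' to compare against, and your self-contained argument is a genuine addition.

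Your proof is correct. Two small points worth tightening. First, in the reverse direction your step ``$\tau^{rr'}(g\Lambda_0) = \sigma^{rr'}(g)\cdot t^{rs'}\Lambda_0$'' is valid precisely because, in the standard basis of $N_\mu^m$, the iterate $\tau^{r'}$ acts as the \emph{scalar} $t^{s'}$ times $\sigma^{r'}$ (you verified $\tau^{r'}(e_i)=t^{s'}e_i$), so $A^{(rr')}=t^{rs'}I$ commutes with $\sigma^{rr'}(g)$; it would be worth saying this explicitly, since for a general matrix representing $\tau^{rr'}$ that commutation would fail. Second, when you assert that $\Hom(N_\mu,N_{\mu'})$ is pure of weight $\mu'-\mu$, you should justify this via the eigenvalue characterization in Proposition~\ref{P:Dieudonne classification}(i) (char poly of $\tau$ on a basis) rather than via the lattice criterion itself, to avoid circularity; once that is clear, your deduction that $\Hom_\tau(N_\mu,N_{\mu'})=0$ and the resulting canonical isotypic decomposition go through as you say. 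The determinant comparison at the end is clean and is the heart of the matter.
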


Let $L$ be a field extension of $k$ (not necessarily separably closed) and let $M$ be an \'etale $\tau$-module over $L(t)$.   The \defi{weights} of $M$ are the weights of the $\tau$-module $K\laurent \otimes_{L(t)} M$ over $K\laurent$ where $K$ is any separably closed field containing $L$.   Again, we say that $M$ is \defi{pure of weight $\lambda$} if all the weights of $M$ equal $\lambda$.   We now give a criterion for $M$ to be pure of weight 0.

\begin{lemma} \label{L:pure 0 criterion}
Define the subring $\OO:=L[t^{-1}]_{(t^{-1})}=L(t)\cap L[\![t^{-1}]\!]$ of $L(t)$; it is a local ring with quotient field $L(t)$.  Let $M$ be an \'etale $\tau$-module over $L(t)$.   Then the following are equivalent:
\begin{alphenum}
\item $M$ is pure of weight 0.
\item There is an $\OO$-submodule $N$ of $M$ stable under $\tau_M$ such that $(N,\tau_M|_N)$ is an \'etale $\tau$-module over $\OO$ and the natural map $L(t)\otimes_{\OO} N \to M$ of $\tau$-modules is an isomorphism.   
\end{alphenum}
\end{lemma}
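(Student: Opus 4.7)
The forward direction (b)$\,\Rightarrow\,$(a) is short. Given $N$ as in (b), base changing along $\OO \hookrightarrow \Lbar[\![t^{-1}]\!]$ yields $\bar\Lambda := \Lbar[\![t^{-1}]\!] \tensor_\OO N$, a lattice inside $\bar M := \Lbar\laurent \tensor_{L(t)} M$. \'Etaleness of $N$ over $\OO$ means $\tau_{N,\lin}$ is an isomorphism, hence so is $\tau_{\bar\Lambda,\lin}$; equivalently $\tau_{\bar M}(\bar\Lambda) = \bar\Lambda$. Lemma~\ref{L:lattice criterion} with $\lambda = 0$ (i.e.\ $r=1$, $s=0$) then gives that $M$ is pure of weight $0$.

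For (a)$\,\Rightarrow\,$(b), my plan has two stages: first produce a $\tau$-stable \'etale $L[\![t^{-1}]\!]$-lattice $\widehat N$ in the completion $\widehat M := L\laurent \tensor_{L(t)} M$, then take $N := \widehat N \intersect M$ inside $\widehat M$.

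To construct $\widehat N$, set $K := \Lbar$, $\Gamma := \Gal(K/L)$, and $\bar M := K\laurent \tensor_{L(t)} M$. Lemma~\ref{L:lattice criterion} over $K$ supplies a $\tau$-stable $K[\![t^{-1}]\!]$-lattice in $\bar M$, and moreover $(\bar M)^\tau$ is an $n$-dimensional $k\laurent$-vector space stable under $\Gamma$ (which acts on $\bar M$ through the first factor, commuting with $\tau$). I plan to refine the lattice so it is also $\Gamma$-stable: elements of $(\bar M)^\tau$ are cut out by the $\sigma$-linear system $\sigma(f)=A^{-1}f$ for $A \in \GL_n(L(t))$ the matrix of $\tau$ in some $L(t)$-basis of $M$, and a coefficient-by-coefficient Artin--Schreier recursion shows $(\bar M)^\tau \subseteq (L^\sep\laurent)^n$ with $\Gamma$ acting continuously through $\Gal(L^\sep/L)$. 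Hence any $k[\![t^{-1}]\!]$-lattice in $(\bar M)^\tau$ has finite $\Gamma$-orbit, whose sum of translates is a $\Gamma$-stable $k[\![t^{-1}]\!]$-lattice $L_0 \subseteq (\bar M)^\tau$. The submodule $\bar\Lambda := K[\![t^{-1}]\!] \cdot L_0 \subseteq \bar M$ is then $\tau$-stable (its generators are $\tau$-fixed, so $\tau_{\bar\Lambda,\lin}$ is visibly an isomorphism) and $\Gamma$-stable, and faithfully flat Galois descent along $L[\![t^{-1}]\!] \hookrightarrow K[\![t^{-1}]\!]$ yields $\widehat N := \bar\Lambda \intersect \widehat M$ with $K[\![t^{-1}]\!] \tensor_{L[\![t^{-1}]\!]} \widehat N = \bar\Lambda$---an \'etale $\tau$-stable $L[\![t^{-1}]\!]$-lattice.

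Finally, define $N := \widehat N \intersect M$ inside $\widehat M$. Fix any $\OO$-lattice $\Lambda_0 \subseteq M$; comparison gives $t^{\,c}\cdot(L[\![t^{-1}]\!]\tensor_\OO \Lambda_0) \subseteq \widehat N \subseteq t^{-c}\cdot(L[\![t^{-1}]\!]\tensor_\OO \Lambda_0)$ for some $c$, and intersecting with $M$ yields $t^{\,c}\Lambda_0 \subseteq N \subseteq t^{-c}\Lambda_0$, so $N$ is a free $\OO$-module of rank $n$. Density of $L(t)$ in $L\laurent$, together with closedness of the finitely generated $L[\![t^{-1}]\!]$-submodule $L[\![t^{-1}]\!] \cdot N \subseteq \widehat N$, forces $L[\![t^{-1}]\!] \tensor_\OO N = \widehat N$, and consequently $L(t)\tensor_\OO N \isomto M$. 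Stability of $N$ under $\tau_M$ is inherited from $\widehat N$ and $M$, and $\tau_{N,\lin}$ is an isomorphism by faithful flatness of $\OO \to L[\![t^{-1}]\!]$. The principal obstacle is the Galois-equivariant refinement above: verifying continuity of the $\Gamma$-action on $(\bar M)^\tau$ via the Artin--Schreier analysis and extracting a stable $k[\![t^{-1}]\!]$-lattice. The remaining steps are routine completion and descent manipulations.
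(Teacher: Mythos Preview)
Your direction (b)$\Rightarrow$(a) matches the paper's exactly.

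For (a)$\Rightarrow$(b), your argument is essentially correct but takes a much longer route than the paper's. The paper performs no descent and no completion: it fixes an $L(t)$-basis $e_1,\ldots,e_d$ of $M$ lying inside the $\Lbar[\![t^{-1}]\!]$-lattice $\Lambda$ of Lemma~\ref{L:lattice criterion}, and simply takes $N$ to be the $\OO$-submodule of $M$ generated by $\{\tau_M^{\,j}(e_i):j\ge 1\}$. The only role of $\Lambda$ is as a boundedness certificate---since every $\tau_M^{\,j}(e_i)$ lies in the lattice $\Lambda$, the coordinates of these elements in the $e$-basis have $\ord_{t^{-1}}$ bounded below, so $N$ is a finitely generated (hence free, $\OO$ being a DVR) $\OO$-lattice in $M$, visibly $\tau_M$-stable. \'Etaleness of $N$ then follows because purity of weight $0$ forces the matrix of $\tau_M$ in any $L(t)$-basis to have determinant of $t^{-1}$-adic valuation $0$, hence a unit in $\OO$. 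This is a two-line construction once the bound is in hand.

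Your route, via a $\Gamma$-stable $k[\![t^{-1}]\!]$-lattice in $(\bar M)^\tau$, Galois descent to an $L[\![t^{-1}]\!]$-lattice $\widehat N$, and then $N=\widehat N\cap M$, does work, and has the virtue of producing the completed lattice $\widehat N$ explicitly (which is closer in spirit to Tamagawa's ``\'etale at $t^{-1}=0$'' formulation). Two comments, however. First, your ``coefficient-by-coefficient Artin--Schreier recursion'' for $(\bar M)^\tau\subseteq (L^\sep\laurent)^n$ is delicate: in an arbitrary $L(t)$-basis the matrix $A^{-1}$ need not lie in $M_n(L[\![t^{-1}]\!])$, so the recursion can run the wrong direction. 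The clean way to get this inclusion is to invoke Proposition~\ref{P:Dieudonne classification} directly over the \emph{separably} closed field $L^\sep$---it does not require algebraic closure---which already splits $L^\sep\laurent\otimes_{L(t)}M$ as $N_0^{\,n}$ and hence exhibits a full $k\laurent$-basis of $\tau$-invariants with coefficients in $L^\sep\laurent$; comparing dimensions gives equality with $(\bar M)^\tau$. Second, for the same reason your descent should be along $L[\![t^{-1}]\!]\hookrightarrow L^\sep[\![t^{-1}]\!]$ rather than $\Lbar[\![t^{-1}]\!]$: when $L$ is imperfect one has $(\Lbar)^{\Gal(\Lbar/L)}=L^{\perf}\ne L$, so genuine Galois descent is only available from $L^\sep$.
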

\begin{proof}
First suppose that $M$ is pure of weight 0.   By Lemma~\ref{L:lattice criterion}, there is an $\Lbar[\![t^{-1}]\!]$-lattice $\Lambda$ of $\Lbar\laurent\otimes_{L(t)} M$ such that $\tau_M(\Lambda)=\Lambda$.   Fix a basis $e_1,\ldots, e_d$ of $M$ over $L(t)$; we may assume that the $e_i$ are contained in $\Lambda$.  Let $N$ be the $\OO$-submodule of $M$ generated by the set $\calB=\{\tau_M^j(e_i): 1\leq i \leq d,\, j\geq 1\}$.   We can write each $v\in \calB$, uniquely in the form $v=\sum_{i} a_i e_i$ with $a_i\in L(t)$; let $\alpha$ be the infimum of the values $\ord_{t^{-1}}(a_i)$   over all $i\in \{1\ldots, d\}$ and $v\in \calB$.       We find that $\alpha$ is finite, since $N$ is contained in the $\Lbar[\![t^{-1}]\!]$-lattice $\Lambda$ which is stable under $\tau_M$.  Using that $\alpha$ is finite, we find that $N$ is a free $\OO$-module of rank $d$ which is stable under $\tau_M$ and that the map $L(t)\otimes_{\OO} N \to M$ is an isomorphism.  The $\tau$-module $(N,\tau_M|N)$  is \'etale since $(M,\tau_M)$ is \'etale.   It is now clear that $N$ satisfies all the conditions of (b).

Now suppose there is an $\OO$-submodule $N$ satisfying the conditions of (b).   Then $\Lambda:= \Lbar[\![t^{-1}]\!] \otimes_{\OO} N$ is a $\Lbar[\![t^{-1}]\!]$-lattice in $\Lbar\laurent\otimes_{L(t)} M$ that satisfies $\tau_{\Lbar\laurent\otimes_{L(t)} M}(\Lambda)=\Lambda$.  Lemma~\ref{L:lattice criterion} implies that $\Lbar\laurent\otimes_{L(t)} M$, and hence $M$ also, is pure of weight 0.
\end{proof}

\subsection{Tate conjecture}

Let $M$ be an \'etale $\tau$-module over $L(t)$.   The group $\Gal_L$ acts on $M':=L^\sep\laurent \otimes_{L(t)} M$ via its action on the coefficients of $L^\sep\laurent$.   The $\Gal_L$-action on $M'$ commutes with $\tau_{M'}$, so $M'^{\tau}$ is a vector space over $k\laurent$ with an action of $\Gal_L$.

\begin{theorem} \label{T:Tamagawa-Tate}
Let $L$ be a finitely generated extension of $k$.   Let $M$ be an \'etale $\tau$-module over $L(t)$ that is pure of weight 0.  Then the natural map
\[
M^{\tau} \otimes_{k(t)} k\laurent \to \big((L^\sep\laurent \otimes_{L(t)}  M)^\tau \big)^{\Gal_L}
\]
is an isomorphism of finite dimensional $k\laurent$-vector spaces.
\end{theorem}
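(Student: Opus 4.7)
My plan is to reduce the theorem to the Tate conjecture of Taguchi and Tamagawa, exploiting the weight $0$ hypothesis to pass from the generic fiber $M$ over $L(t)$ down to an integral model at $t^{-1} = 0$. The injectivity of the map is formal: $k(t) \hookrightarrow k\laurent$ is faithfully flat, $M^\tau \hookrightarrow (L^\sep\laurent \otimes_{L(t)} M)^\tau$ by faithful flatness of $L(t) \hookrightarrow L^\sep\laurent$, and the image is automatically $\Gal_L$-fixed since $M^\tau$ lies in the $\Gal_L$-invariant submodule $M \otimes 1$. The content of the theorem is therefore surjectivity (equivalently, a $k\laurent$-dimension comparison), which is exactly what a Tate conjecture provides.

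First I would apply Lemma \ref{L:pure 0 criterion} to the pure weight $0$ module $M$ to extract an $\OO$-submodule $N \subseteq M$ over $\OO = L[t^{-1}]_{(t^{-1})}$ that is $\tau$-stable, étale as a $\tau$-module over $\OO$, and satisfies $L(t) \otimes_\OO N \xrightarrow{\sim} M$. Completing at $t^{-1}$ produces $\hat N := L[\![t^{-1}]\!] \otimes_\OO N$, an étale $\tau$-module over $L[\![t^{-1}]\!]$, together with canonical identifications $L\laurent \otimes_{L(t)} M \cong L\laurent \otimes_{L[\![t^{-1}]\!]} \hat N$ and similarly over $L^\sep$. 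Because $\tau$-invariants in étale $\tau$-modules behave well under flat base change (on the $k$-side, $(k[\![t^{-1}]\!])^\sigma = k[\![t^{-1}]\!]$, etc.), the statement of Theorem \ref{T:Tamagawa-Tate} becomes equivalent to showing
\[
\hat N^\tau \otimes_{k[\![t^{-1}]\!]} k\laurent \xrightarrow{\sim} \bigl((L^\sep\laurent \otimes_{L[\![t^{-1}]\!]} \hat N)^\tau\bigr)^{\Gal_L}.
\]

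To close the argument, I would view $\hat N$ as the $t^{-1}$-adic (i.e.\ infinite-place) realization of an $A$-motive over $L$ in the sense of Anderson, the weight $0$ hypothesis providing the corresponding "good reduction at $\infty$" condition that makes $\hat N$ the integral Tate module of that motive at $\infty$. Tamagawa's more general formulation of the Tate conjecture \cite{MR1417504}, which is stated for $A$-motives and applies at every place of $F$ (the $\lambda$-adic Tate conjecture in (\ref{E:lambda-adic Tate}) is the case $\lambda \neq \infty$, but the formulation covers $\lambda = \infty$ as well), then yields the displayed isomorphism, and Lemma \ref{L:reduction of Tate to invariants} will convert this into Theorem \ref{T:Tate conjecture}.

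The main obstacle is the last step: translating the étaleness and purity data of $\hat N$ into the precise integrality hypothesis of Tamagawa's theorem, and verifying that the Galois-module structure obtained from $(L^\sep\laurent \otimes \hat N)^\tau$ really does coincide with the $\infty$-adic Tate module of the associated $A$-motive. Everything else is formal manipulation with lattices, completions, and invariants.
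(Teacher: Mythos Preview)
Your strategy—use Lemma~\ref{L:pure 0 criterion} to produce an integral model and then invoke Tamagawa—matches the paper's, but your proposed final step has a genuine gap. You want to view $\hat N$ as the $\infty$-adic realization of some Anderson $A$-motive over $L$ and then apply a Tate conjecture ``at every place of $F$''. This fails for two reasons. First, an arbitrary \'etale $\tau$-module over $L(t)$ that is pure of weight $0$ need not come from any $A$-motive: Anderson motives are finitely generated over $L[t]$, and nothing in the hypotheses guarantees that $M$ admits such a model. Second, this misreads Tamagawa \cite{MR1417504}. His theorem is not phrased for $A$-motives at arbitrary places; it is stated directly for ``restricted $L(t)\{\tau\}$-modules that are \'etale at $t^{-1}=0$''. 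The paper's observation is that this class coincides \emph{exactly} with the \'etale $\tau$-modules over $L(t)$ that are pure of weight $0$---Lemma~\ref{L:pure 0 criterion} is precisely that dictionary---so Tamagawa applies to $M$ itself, with no motive in sight.

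There is also a step you do not address: Tamagawa's conclusion is a Hom-isomorphism
\[
\Hom(M',M)\otimes_{k(t)} k\laurent \xrightarrow{\sim} \Hom_{k\laurent[\Gal_L]}\big(\widehat V(M'),\widehat V(M)\big),
\]
not a statement about $\tau$-invariants directly. The paper extracts the invariants-statement by taking $M'=L(t)$ with $\tau_{M'}=\sigma|_{L(t)}$: then $\Hom(M',M)\cong M^\tau$ via $f\mapsto f(1)$, while $\widehat V(M')=k\laurent$ with trivial $\Gal_L$-action, so $\Hom_{k\laurent[\Gal_L]}\big(k\laurent,\widehat V(M)\big)\cong\widehat V(M)^{\Gal_L}$. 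This is what converts Tamagawa's Hom-isomorphism into the isomorphism of Theorem~\ref{T:Tamagawa-Tate}, and it is missing from your outline.
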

\begin{proof}
For an \'etale $\tau$-module $M$ over $L(t)$, we define
\[
\widehat{V}(M):= (L^\sep\laurent \otimes_{L(t)} M)^\tau;
\]
it is a $k\laurent$-vector space with a natural action of $\Gal_L$.  Let $M'$ and $M$ be \'etale $\tau$-modules over $L(t)$ that are pure of weight 0.    There is a natural homomorphism
\begin{equation} \label{E:Tamagama-Tate}
\Hom(M', M)\otimes_{k(t)} k\laurent \to \Hom_{k\laurent[\Gal_L]}\big(\widehat{V}(M'), \widehat{V}(M)\big)
\end{equation}
of vector spaces over $k\laurent$.    We claim that (\ref{E:Tamagama-Tate}) is an isomorphism.   In the notation of Tamagawa in \cite{MR1417504}, $M'$ and $M$ are  ``restricted $L(t)\{\tau\}$-modules that are \'etale at $t^{-1}=0$''.   That $M$ is an \'etale $\tau$-module over $L(t)$ is equivalent to it being a ``restricted module over $L(t)\{\tau\}$'', and it further being pure of weight 0 is equivalent to it being ``\'etale at $t^{-1}=0$''  by Lemma~\ref{L:pure 0 criterion}.   (Note that in Definition 1.1 of \cite{MR1417504}, the submodule $\mathcal{M}$ should also be an $O_{L(t)}$-sublattice of $M$).     Tamagawa's analogue of the Tate conjecture \cite{MR1417504}*{Theorem~2.1} then says that (\ref{E:Tamagama-Tate}) is an isomorphism of (finite dimensional) vector spaces over $k\laurent$.  Tamagawa theorem, whose proof is based on methods arising from $p$-adic Hodge theory, are only sketched in \cite{MR1417504}; details have since been provided by N.~Stalder \cite{MR2644927}.

Now consider the special case where the $\tau$-module $M'$ is $L(t)$ with $\tau_{M'}=\sigma|_{L(t)}$.     So $\widehat{V}(M')= k\laurent$ with the trivial $\Gal_L$-action.  We have isomorphisms
\[
\Hom(M',M)=\Hom_{L(t)}(L(t), M)^\tau \xrightarrow{\sim} M^\tau,\quad f \mapsto f(1)
\]
and 
\[
\Hom_{k\laurent[\Gal_L]}\big(\widehat{V}(M'), \widehat{V}(M)\big)=\Hom_{k\laurent[\Gal_L]}\big(k\laurent, \widehat{V}(M)\big)\xrightarrow{\sim} \widehat{V}(M)^{\Gal_L},\quad f\mapsto f(1).
\]
Combining with the isomorphism (\ref{E:Tamagama-Tate}), we find that the natural map 
\[
M^\tau\otimes_{k(t)} k\laurent \to \widehat{V}(M)^{\Gal_L} = \big((L^\sep\laurent \otimes_{L(t)}  M)^\tau \big)^{\Gal_L}
\] 
is an isomorphism of finite dimensional vector space over $k\laurent$.
\end{proof}

\begin{corollary} \label{C:Tamagawa-Tate}
Let $L$ be a finitely generated extension of $k$.   Let $M$ be an \'etale $\tau$-module over $L(t)$ that is pure of some weight $\lambda$.  Then for any separably closed extension $K$ of $L$, the natural map
\[
\End(M)\otimes_{k(t)} k\laurent \to \End(K\laurent\otimes_{L(t)}M)^{\Gal(K/L)}
\]
is an isomorphism.
\end{corollary}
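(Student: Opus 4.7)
The plan is to reduce Corollary~\ref{C:Tamagawa-Tate} to Theorem~\ref{T:Tamagawa-Tate} by applying the latter to the internal endomorphism $\tau$-module $N := \End_{L(t)}(M)$ in place of $M$. Since $M$ is free of finite rank over $L(t)$, internal Hom commutes with base change, so $R' \otimes_{L(t)} N \cong \End_{R'}(R' \otimes_{L(t)} M)$ as $\tau$-modules for every $\sigma$-stable extension $L(t) \subseteq R'$. The key input is that $N$ is pure of weight $0$ whenever $M$ is pure of weight $\lambda = s/r$. To verify this, base change to $K'\laurent$ for a separably closed $K' \supseteq L$; by Proposition~\ref{P:Dieudonne classification} we may write $K'\laurent \otimes_{L(t)} M \cong N_\lambda^{\oplus m}$, and a direct computation with the basis of Definition~\ref{D:N-lambda} shows that the $r$-th iterate of the linearization of $\tau_{N_\lambda}$ is multiplication by $t^s$. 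Consequently the induced map on $\End_{K'\laurent}(N_\lambda^{\oplus m})$ becomes the identity after the canonical basis identification, so summing the standard $K'[\![t^{-1}]\!]$-lattice with its finitely many $\tau_{\End}$-translates produces a $\tau_{\End}$-stable lattice. Lemma~\ref{L:lattice criterion} applied with $(r,s) = (1,0)$ then confirms that $N$ is pure of weight $0$.

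Theorem~\ref{T:Tamagawa-Tate} applied to $N$ now yields an isomorphism
\[
N^\tau \otimes_{k(t)} k\laurent \xrightarrow{\sim} \bigl((L^\sep\laurent \otimes_{L(t)} N)^\tau\bigr)^{\Gal_L}.
\]
The identifications $N^\tau = \End(M)$ (morphisms of $\tau$-modules) and $(L^\sep\laurent \otimes_{L(t)} N)^\tau = \End(L^\sep\laurent \otimes_{L(t)} M)$, the latter coming from base change of internal Hom, establish the corollary in the case $K = L^\sep$.

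For a general separably closed extension $K \supseteq L$, fix the separable algebraic closure of $L$ inside $K$ as a distinguished copy of $L^\sep$; it is $\Gal(K/L)$-stable, so restriction defines a homomorphism $\Gal(K/L) \to \Gal_L$, which is surjective by the standard fact that every $L$-automorphism of $L^\sep$ extends to the separably closed $K$. By Proposition~\ref{P:Dieudonne classification}(ii), $\End(N_\lambda)$ is the central $k\laurent$-division algebra of invariant $\lambda$, independent of the ambient separably closed field, so $\End(M_{L^\sep})$ and $\End(M_K)$ have equal finite $k\laurent$-dimensions. The base-change map $\End(M_{L^\sep}) \hookrightarrow \End(M_K)$ is injective by flat base change along the field extension $L^\sep\laurent \subseteq K\laurent$, hence an isomorphism. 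Since its action only involves coefficients in $L^\sep$, $\Gal(K/L)$ acts on $\End(M_K)$ through $\Gal_L$, and taking invariants gives $\End(M_K)^{\Gal(K/L)} = \End(M_{L^\sep})^{\Gal_L}$, completing the reduction. The principal obstacle is the weight-$0$ purity of $N$ in the first step; the remaining ingredients are bookkeeping with internal Homs, base change of $\tau$-modules, and Theorem~\ref{T:Tamagawa-Tate}.
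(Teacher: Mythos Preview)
Your proof is correct and follows essentially the same strategy as the paper: both reduce to the case $K=L^\sep$ via the dimension count of Proposition~\ref{P:Dieudonne classification}(ii), and both apply Theorem~\ref{T:Tamagawa-Tate} to the internal endomorphism $\tau$-module $H=\End_{L(t)}(M)$ after checking it is pure of weight~$0$. The only substantive difference is how that purity is verified: you argue directly from the explicit basis of $N_\lambda$ and the lattice criterion of Lemma~\ref{L:lattice criterion}, whereas the paper observes more cleanly that $H\cong M^\vee\otimes_{L(t)}M$ and that weights are additive under tensor product (and negated under duality), giving weight $-\lambda+\lambda=0$. One small point: Lemma~\ref{L:lattice criterion} is stated for algebraically closed $K$, so your invocation with a merely separably closed $K'$ should strictly pass to $\bbar{K'}$ first; this is harmless since the weights are unchanged.
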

\begin{proof}
Fix an embedding $L^\sep\subseteq K$.   We have an inclusion $ \End(L^\sep\laurent\otimes_{L(t)}M) \subseteq  \End(K\laurent\otimes_{L(t)}M)$ of finite dimensional vector spaces over $k\laurent$; it is actually an equality since by Proposition~\ref{P:Dieudonne classification}(ii), their dimensions depend only the weights of $M$.  Hence,
\[
 \End(K\laurent\otimes_{L(t)}M)^{\Gal(K/L)}= \End(L^\sep\laurent\otimes_{L(t)}M)^{\Gal(K/L)}= \End(L^\sep\laurent\otimes_{L(t)}M)^{\Gal_L}.
 \]
 
So without loss of generality, we may assume that $K=L^\sep$.  Define the $L(t)$-module $H=\End_{L(t)}(M)$.  Since $M$ is an \'etale $\tau$-module, we can give $H$ the structure of \'etale $\tau$-module over $L(t)$.   The natural map $H^\tau \otimes_{k(t)} k\laurent \to \big((L^\sep\laurent \otimes_{L(t)}  H)^\tau \big)^{\Gal_L}$ can be rewritten as
\[
\End(M)\otimes_{k(t)} k\laurent \to \End(L^\sep\laurent\otimes_{L(t)}M)^{\Gal_L}.
\]
So the corollary will follow from Theorem~\ref{T:Tamagawa-Tate} if we can show that $H$ is pure of weight 0.  

The dual $M^{\vee}:=\Hom_{L(t)}(M,L(t))$ is an \'etale $\tau$-module over $L(t)$ that is pure of weight $-\lambda$ (for the weight, one can use the characterization in terms of eigenvalues as in Proposition~\ref{P:Dieudonne classification}).   If $M_1$ and $M_2$ are \'etale $\tau$-modules over $L(t)$ that are pure of weight $\lambda_1$ and $\lambda_2$, respectively, then $M_1\otimes_{L(t)} M_2$ is pure of weight $\lambda_1+\lambda_2$ (use Lemma~\ref{L:lattice criterion}).      Therefore $H$, which is isomorphic as a $\tau$-module to $M^{\vee}\otimes_{L(t)} M$, is pure of weight $-\lambda + \lambda=0$.
\end{proof}

\subsection{Proof of Theorem~\ref{T:Tate conjecture}}
Let $\phi\colon A \to L[t]$ be a Drinfeld module with generic characteristic and $L$ a finitely generated field.   \\

\noindent{\textbf{Case 1:}} \emph{Suppose that $A=k[t]$ and $F=k(t)$.}

Define $M_\phi:= L[\tau]$ and give it the $L[t]=L\otimes_k A$-module structure for which
\[
(c\otimes a) \cdot m = cm\phi_a
\]
for $c\in L$, $a\in A$ and $m\in M_\phi$.    Define the map $\tau_{M_\phi}\colon M_\phi \to M_\phi$ by $m\mapsto \tau m$.      The pair $(M_\phi,\tau_{M_\phi})$ is a $\tau$-module over $L[t]$.   As an $L[t]$-module, $M_\phi$ is free of rank $n$ with basis $\beta=\{1,\tau,\ldots, \tau^{n-1}\}$.  With respect to the basis $\beta$, the linearization $\tau_{M_\phi,\lin}$ is described by the $n\times n$ matrix
\[
B:=\left(\begin{array}{cccc}0 &  & 0 & (t-b_0)/b_n \\1 &  & 0 & -b_1/b_n \\ & \ddots &  & \vdots \\0 &  & 1 & -b_{n-1}/b_n\end{array}\right) 
\]
where $\phi_t=\sum_{i=0}^n b_i \tau^i$.  

For $f\in \End_L(\phi)$, the map $M_\phi\to M_\phi,\, m\mapsto mf$ is a homomorphism of $L[t]$-modules which commutes with $\tau_{M_\phi}$.   This gives a homomorphism $\End_L(\phi)^{\text{opp}} \to \End(M_\phi)$ of $k[t]$-algebras; it is in fact an isomorphism \cite{MR850546}*{Theorem~1}.   Note that for a ring $R$, we will denote by $R^{\text{opp}}$ the ring $R$ with the same addition and multiplication $\alpha\cdot \beta= \beta\alpha$    

  Let $M_\phi(t)$ be the $\tau$-module obtained by base extending $M_\phi$ to $L(t)$.   Since $M_\phi(t)$ is an $L(t)$-vector space of dimension $n$ with $\det(B)\in L(t)^\times$, we find that $M_\phi(t)$ is an \'etale $\tau$-module.   We have an isomorphism $\End_L(\phi)^{\text{opp}} \otimes_{k[t]} k(t)=\End(M_\phi(t))$ of $k(t)$-algebras.  From Anderson \cite{MR850546}*{Prop.~4.1.1}, we know that $M_\phi(t)$ is pure of weight $1/n$ (use Lemma~\ref{L:lattice criterion} to relate his notion of purity and weight with ours).

Define $\bbar{M}_\phi:=\Lbar\twistedLaurent$.   For $c=\sum_i a_i t^{-i} \in \Lbar\laurent$ and $m\in \bbar{M}_\phi$,  we define
\[
c\cdot m = \sum_i a_i m \phi_t^{-i};
\]
this turns $\bbar{M}_\phi$ into a free $\Lbar\laurent$-module with basis $\{1,\tau,\ldots,\tau^{n-1}\}$.   The pair $(\bbar{M}_\phi,\tau_{\bbar{M}_\phi})$, where $\tau_{\bbar{M}_\phi}\colon \bbar{M}_\phi\to \bbar{M}_\phi$ is the map $m\mapsto \tau m$,  is a $\tau$-module.   One readily verifies that $\bbar{M}_\phi$ agrees with the base extension of $M_\phi$ to $\Lbar\laurent$.     

   Take any $f\in D_\phi$.   Since $f$ commutes with $\phi_t$, we find that the map $\bbar{M}_\phi\to \bbar{M}_\phi,\, m\mapsto mf$ is a homomorphism of $\Lbar\laurent$-modules which commutes with $\tau_{\bbar{M}_\phi}$.  This gives a homomorphism 
\begin{equation} \label{E:inclusion of division algebras}
D_\phi^{\text{opp}} \hookrightarrow \End(\bbar{M}_\phi)
\end{equation}
of $F_\infty$-algebras.    By Lemma~\ref{L:u facts}(\ref{I:u fact ii}) and Proposition~\ref{P:Dieudonne classification}, $D_\phi^{\text{opp}}$ and $\End(\bbar{M}_\phi)$ are both $F_\infty$-division algebras with invariant $1/n$, so (\ref{E:inclusion of division algebras}) is an isomorphism.  Moreover, the isomorphism (\ref{E:inclusion of division algebras}) is compatible with the respective $\Gal(\Lbar/L)$-actions.   Restricting (\ref{E:inclusion of division algebras}) to $\End_L(\phi)\otimes_{k[t]} k\laurent$ gives the isomorphism 
\[
\End_L(\phi)^{\text{opp}}\otimes_{k[t]} k\laurent \xrightarrow{\sim} \End(M_\phi) \otimes_{k[t]} k \laurent=\End(M_\phi(t)) \otimes_{k(t)} k \laurent.
\]
By Lemma~ \ref{L:reduction of Tate to invariants}, it suffices to prove that $D_\phi^{\Gal(\Lbar/L)}=\End_L(\phi)\otimes_{k[t]} k\laurent$, which we find is equivalent to showing that the natural map
\[
\End(M_\phi(t))\otimes_{k(t)} k\laurent \to \End(\bbar{M}_\phi)^{\Gal(\Lbar/L)} = \End(\Lbar\laurent\otimes_{L(t)} M_\phi(t))^{\Gal(\Lbar/L)}
\] 
is an isomorphism.   Since $M_\phi(t)$ is an \'etale $\tau$-module that is pure of weight $1/n$ and $L$ is finitely generated, this follows from Corollary~\ref{C:Tamagawa-Tate}.\\

\noindent{\textbf{Case 2:}} \emph{General case}.

Choose a non-constant element $t\in A$.   Composing the inclusion $k[t]\subseteq A$ with $\phi$ gives a ring homomorphism
\[
\phi' \colon k[t] \to L[\tau],\,\, a\mapsto \phi'_a.
\]
By (\ref{E:degrees}), we have $\ord_{\tau^{-1}}(\phi'_t) <0$ and hence $\phi'$ is a Drinfeld module (though possibly of a different rank than $\phi$).   Since $\phi$ has generic characteristic, so does $\phi'$.   Let $\infty$ also denote the place of $k(t)$ with uniformizer $t^{-1}$.  

Since $\phi(A)\supseteq \phi'(k[t])$, we have inclusions
\[
\End_L(\phi) \subseteq \End_L(\phi')\quad\text{ and }\quad D_\phi\subseteq D_{\phi'}.
\]
Therefore,
\[
\End_L(\phi) \otimes_A F_\infty \subseteq D_\phi^{\Gal(\Lbar/L)} \subseteq D_{\phi'}^{\Gal(\Lbar/L)} = \End_L(\phi') \otimes_A F_\infty
\]
where the equality follows from Case 1.     By Lemma~\ref{L:reduction of Tate to invariants}, it thus suffices to prove the inclusion $\End_L(\phi)\supseteq \End_L(\phi')$.    The ring $\End_L(\phi')$ certainly contains $\phi(A)$.    Since $\phi'$ has generic characteristic, the ring $\End_L(\phi')$ is commutative \cite{MR0384707}*{\S2}.  So $\End_L(\phi')$ is a subring of $L[\tau]$ that commutes with $\phi(A)$; it is thus a subset of $\End_L(\phi)$.

%-------------------------------- 
 \section{Proof of Theorem~\ref{T:main theorem non-CM}}  \label{S:proof of non-CM}
Let $\phi\colon A \to L[\tau]$ be a Drinfeld module of generic characteristic and rank $n$.   Assume that $L$ is a finitely generated field and that $\End_{\Lbar}(\phi)=\phi(A)$.    To ease notation, we set $D:=D_\phi$ which is a central $F_\infty$-division algebra with invariant $-1/n$.    Several times in the proof, we will replace $L$ by a finite extension; this is fine since we are only interested in $\rho_\infty(W_L)$ up to commensurability.   The $n=1$ case has already been proved (Corollary~\ref{C:main theorem n=1}), so we may assume that $n\geq 2$. 

\subsection{Zariski denseness}  \label{SS:Zariski density}
Let $\GL_D$ be the algebraic group defined over $F_\infty$ such that $\GL_D(R)=(D\otimes_{F_\infty} R)^\times$ for a commutative $F_\infty$-algebra $R$.   In particular, we have $\rho_{\infty}(W_L)\subseteq D^\times = \GL_D(F_\infty)$.  The main task of this section is to prove the following.

\begin{proposition} \label{P:Zariski density}
With assumptions as above,  $\rho_\infty(W_L)$ is Zariski dense in $\GL_D$.
\end{proposition}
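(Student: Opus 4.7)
Let $G\subseteq\GL_D$ be the Zariski closure of $\rho_\infty(W_L)$; the goal is $G=\GL_D$. Since $\End_{\Lbar}(\phi)=\phi(A)$ we automatically have $\End_L(\phi)=\phi(A)$, and Theorem~\ref{T:Tate conjecture} shows that the centralizer of $\rho_\infty(W_L)$ in $D$, and hence of $G$ in $D$, is $F_\infty$. Base-changing to $\bbar{F}_\infty$, we have $D\otimes_{F_\infty}\bbar{F}_\infty\cong M_n(\bbar{F}_\infty)$ and $\GL_D\otimes_{F_\infty}\bbar{F}_\infty\cong\GL_n$, and the centralizer of $G_{\bbar{F}_\infty}$ in $M_n(\bbar{F}_\infty)$ is $\bbar{F}_\infty$; the double centralizer theorem then implies $G_{\bbar{F}_\infty}$ acts absolutely irreducibly on the standard representation $V\cong\bbar{F}_\infty^n$. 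Since $G_{\bbar{F}_\infty}$ acts faithfully on $V$ and a nontrivial unipotent radical $U$ of the identity component would fix a nonzero proper $G$-stable subspace $V^U$, the identity component $G^\circ_{\bbar{F}_\infty}$ is reductive.

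To control the center, I would invoke the already-established rank-$1$ case. Via Anderson's determinant $A$-motive construction, the character $\det\circ\rho_\infty\colon W_L\to F_\infty^\times$ agrees up to finite data with $\rho_{\psi,\infty}$ for a rank-$1$ Drinfeld module $\psi$ canonically associated to $\phi$; Corollary~\ref{C:main theorem n=1} then gives that its image is open of finite index in $F_\infty^\times$, in particular Zariski dense in $\GG_m$. Consequently the morphism $\det\colon G\to\GG_m$ is surjective. By Schur's lemma, $Z(G^\circ_{\bbar{F}_\infty})$ acts on $V$ by scalars and so embeds in $\GG_m$; the surjectivity of $\det$ then forces $Z(G^\circ_{\bbar{F}_\infty})=\GG_m$. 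Writing $G^\circ_{\bbar{F}_\infty}=\GG_m\cdot H$, the derived subgroup $H$ is connected semisimple and still acts absolutely irreducibly on $V$.

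The remaining step, and the main obstacle, is to show $H=\SL(V)$. For this I would combine Pink's $\lambda$-adic openness (Theorem~\ref{T:Pink MT}) with the function-field Chebotarev theorem to produce a positive-density set of closed points $x\in|X|$ at which $P_{\phi,x}(T)$ is irreducible and separable of degree $n$ over $F$. At any such $x$, Lemma~\ref{L:infty independence} exhibits $\rho_\infty(\Frob_x)\in G(F_\infty)$ as a regular semisimple element of $\GL_n(\bbar{F}_\infty)$ whose $n$ eigenvalues are pairwise distinct and cyclically permuted by a single Galois element of order $n$. A connected semisimple subgroup of $\SL_n$ acting absolutely irreducibly on the standard $n$-dimensional representation and containing an element with this eigenvalue configuration must have a maximal torus of rank $n-1$ and realize $V$ as a minuscule highest weight representation; the classification of minuscule irreducible representations of simple algebraic groups (cf.~Bourbaki) leaves $H=\SL_n$ as the only possibility. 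Hence $G_{\bbar{F}_\infty}=\GL_n$, and therefore $G=\GL_D$ as required.
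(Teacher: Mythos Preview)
Your argument through irreducibility and reductivity of $G^\circ$ is fine and parallels the paper's, but the decisive step---showing $H=\SL_n$---has a real gap. The assertion that an element with $n$ distinct Galois-conjugate eigenvalues forces $H$ to have rank $n-1$, or that $V$ must be minuscule with $H$ then pinned down as $\SL_n$, is not justified and is false as stated. For instance, $\Sp_{2m}\subset\SL_{2m}$ acts irreducibly on its standard representation (which is minuscule), has rank only $m$, and contains regular semisimple elements with irreducible (palindromic) characteristic polynomial of degree $2m$; similarly for $\SO_{2m}$. To exclude such $H$ you would have to show that the Frobenius eigenvalues satisfy no multiplicative relation of the form $\alpha_i\alpha_j=c$ imposed by a symplectic or orthogonal pairing, and mere irreducibility of $P_{\phi,x}$ does not give this. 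There is a secondary issue as well: you never pass to a finite extension of $L$ to make $G$ connected, so $\rho_\infty(\Frob_x)$ need not lie in $G^\circ=\GG_m\cdot H$ at all, and your Schur's-lemma argument for $Z(G^\circ)$ tacitly assumes that $G^\circ$, not just $G$, acts irreducibly on $V$.

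The paper circumvents this classification problem by extracting a sharper datum from Frobenius. After replacing $L$ by a finite extension so that $G$ is connected, it invokes Pink's result that $\phi$ has \emph{ordinary} reduction at a positive density of closed points (Lemma~\ref{L:ordinary reduction exists}); at any such point the Newton polygon forces the Frobenius torus $T_x$ to carry a cocharacter whose weights on $V$ are $1$ with multiplicity~$1$ and $0$ with multiplicity~$n-1$ (Lemma~\ref{L:Pink cocharacter}). Conjugating this cocharacter into $G_K$ and applying Pink's criterion (Lemma~\ref{L:Pink criterion}) gives $G_K=\GL_{n,K}$ directly. This specific cocharacter is exactly what rules out $\Sp$, $\SO$, and all other proper irreducible reductive subgroups---information that a regular semisimple Frobenius with irreducible characteristic polynomial does not by itself provide. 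Note also that the paper's proof of Proposition~\ref{P:Zariski density} uses neither the rank-$1$ determinant argument nor the full $\lambda$-adic open image theorem (Theorem~\ref{T:Pink MT}); both of those enter only later in \S\ref{SS:openness} and \S5.3.
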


Let $\GG$ be the algebraic subgroup of $\GL_D$ obtained by taking the Zariski closure of $\rho_\infty(W_L)$ in $\GL_D$; it is defined over $F_\infty$.   After replacing $L$ by a finite extension, we may assume that $\GG$ is connected.    Choose an algebraically closed extension $K$ of $F_\infty$.   For an algebraic group $G$ over $F_\infty$, we will denote by $G_K$ the algebraic group over $K$ obtained by base extension.   We need to prove that $\GG=\GL_D$, or equivalently that $\GG_K=\GL_{D,K}$.

An isomorphism $D\otimes_{F_\infty} K\cong M_n(K)$ of $K$-algebras induces an isomorphism $\GL_{D, K} \cong \GL_{n,K}$ of algebraic groups over $K$ (both are unique up to an inner automorphism).     We fix such an isomorphism, which we use as an identification $\GL_{D, K} = \GL_{n,K}$ and this gives us an action of $D\otimes_{F_{\infty}} K$ on $K^n$.

We will use the following criterion of Pink to show that $\GG_K$ and $\GL_{D,K}=\GL_{n,K}$ are equal. 

\begin{lemma}[\cite{MR1474696}*{Proposition~A.3}] \label{L:Pink criterion}
Let $K$ be an algebraically closed field and let $G\subseteq \GL_{n,K}$ be a reductive connected linear algebraic group acting irreducibly on $K^n$.  Suppose that $G$ has a cocharacter which has weight $1$ with multiplicity $1$ and weight $0$ with multiplicity $n-1$ on $K^n$.  Then $G=\GL_{n,K}.$
\end{lemma}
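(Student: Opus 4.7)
The plan is to show that $G$ contains $\SL_{n,K}$; combined with the fact that $\det(\lambda(t))=t$ is nontrivial, this will force $G=\GL_{n,K}$. Write the cocharacter as $\lambda\colon \GG_{m,K}\to G$ and decompose $V:=K^n = V_1\oplus V_0$ into the weight-$1$ and weight-$0$ eigenspaces of $\lambda$, with $\dim V_1=1$ and $\dim V_0=n-1$. Under $\Ad(\lambda)$ the Lie algebra $\mathfrak{gl}_n$ decomposes as $(\mathfrak{gl}_n)_{-1}\oplus (\mathfrak{gl}_n)_0\oplus (\mathfrak{gl}_n)_{+1}$ with $(\mathfrak{gl}_n)_{\pm 1}=\Hom(V_\mp,V_\pm)$ of dimension $n-1$, and $\mathfrak{g}:=\Lie(G)$ inherits a compatible grading $\mathfrak{g}_{-1}\oplus\mathfrak{g}_0\oplus\mathfrak{g}_{+1}$. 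For reductive $G$ the standard dynamic-subgroup construction furnishes smooth connected subgroups $U_G^\pm := \{g\in G : \lim_{t\to 0}\lambda(t)^{\pm 1}g\lambda(t)^{\mp 1}=1\}$ with $\Lie(U_G^\pm)=\mathfrak{g}_{\pm 1}$, contained in the vector groups $U_{\GL_n}^\pm = I+(\mathfrak{gl}_n)_{\pm 1}$, and a connected reductive Levi $L_G:=\Cent_G(\lambda)$ with $\Lie(L_G)=\mathfrak{g}_0$; the open cell $(L_G\ltimes U_G^-)(L_G\ltimes U_G^+)$ in $G$ shows these three subgroups generate $G$. Since $\lambda(\GG_m)\subseteq L_G$ normalizes $U_G^\pm$ and acts on $(\mathfrak{gl}_n)_{\pm 1}$ by full $K^\times$-scaling, any closed subgroup preserved by this action is a $K$-linear subspace, so $U_G^\pm = I + \mathfrak{g}_{\pm 1}$.

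The key step is to prove $\mathfrak{g}_{-1}=(\mathfrak{gl}_n)_{-1}$. Fix a nonzero $v_1\in V_1$ and set $W:=\mathfrak{g}_{-1}\cdot v_1\subseteq V_0$; I claim $V_1\oplus W$ is $G$-stable. Stability under $U_G^- = I+\mathfrak{g}_{-1}$ follows because its elements fix $V_0$ pointwise and send $v_1$ into $v_1+W$; stability under $U_G^+ = I+\mathfrak{g}_{+1}$ follows because its elements fix $v_1$ and send $W\subseteq V_0$ into $V_1$; stability under $L_G$ follows because $L_G$ preserves $V_1$ and $V_0$ individually and normalizes $U_G^-$, so for $\ell\in L_G$ and $X\in\mathfrak{g}_{-1}$ one computes $\ell\cdot X(v_1) = (\ell X\ell^{-1})(\ell v_1) = \chi(\ell)\cdot(\ell X\ell^{-1})(v_1) \in K\cdot W = W$, where $\chi$ is the character by which $L_G$ acts on $V_1$. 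Irreducibility of $V$ forces $V_1\oplus W = V$, so $W=V_0$; since $X\mapsto X(v_1)$ is an injection $\mathfrak{g}_{-1}\hookrightarrow V_0$ with image $W=V_0$, we conclude $\mathfrak{g}_{-1}=(\mathfrak{gl}_n)_{-1}$. Applying the same argument with $-\lambda$ on the dual representation $V^\vee$ yields $\mathfrak{g}_{+1}=(\mathfrak{gl}_n)_{+1}$.

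A direct bracket computation shows $[(\mathfrak{gl}_n)_{+1},(\mathfrak{gl}_n)_{-1}]$ spans $(\mathfrak{sl}_n)_0$, so $\mathfrak{g}\supseteq\mathfrak{sl}_n$. The subgroup $H:=\langle U_G^+,U_G^-\rangle$ is then a smooth connected subgroup of $\SL_n$ whose Lie algebra contains $\mathfrak{sl}_n$, forcing $H=\SL_n$. Since $G$ is a connected subgroup of $\GL_n$ containing $\SL_n$, the induced map $G\to \GL_n/\SL_n\cong \GG_m$ has connected image; this image is nontrivial because $\det(\lambda(t))=t$, hence equals $\GG_m$, and therefore $G=\GL_{n,K}$.

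The main obstacle is the stability argument for $V_1\oplus W$: in positive characteristic one cannot deduce $G$-stability from $\mathfrak{g}$-stability, so stability must be verified at the group level using the generation of $G$ by $U_G^+$, $U_G^-$, and $L_G$. The preliminary linearity observation $U_G^\pm = I+\mathfrak{g}_{\pm 1}$, which itself relies on the $K^\times$-scaling action of $\lambda(\GG_m)$, is what makes the stability computations clean and bypasses the char-$p$ pathologies of closed subgroups of vector groups.
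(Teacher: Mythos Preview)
The paper does not prove this lemma itself; it is quoted from Pink \cite{MR1474696}*{Proposition~A.3} and invoked as a black box, so there is no proof in the paper to compare against. Your argument is correct and gives a clean self-contained proof via the dynamic decomposition $G=\langle U_G^-,L_G,U_G^+\rangle$: irreducibility of $V$ forces $\mathfrak g_{\pm 1}=(\mathfrak{gl}_n)_{\pm 1}$, whence $\SL_n\subseteq G$, and $\det\circ\lambda$ then gives $G=\GL_{n,K}$.

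One minor caveat on wording: the claim that ``any closed subgroup [of the vector group $(\mathfrak{gl}_n)_{\pm 1}$] preserved by this scaling action is a $K$-linear subspace'' is not literally true at the scheme level in characteristic $p$ (for instance $\alpha_p\subseteq\GG_a$ is $\GG_m$-stable but not a linear subspace). What you actually need, and what holds, is that a \emph{smooth} connected $\GG_m$-stable closed subgroup of a vector group with the scaling action is a linear subspace (a smooth cone through the origin coincides with its tangent space there). Since $U_G^\pm$ are unipotent radicals of parabolics in the reductive group $G$, they are smooth and connected, and the conclusion $U_G^\pm=I+\mathfrak g_{\pm 1}$ stands.
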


\begin{lemma} \label{L:irreducibility}
With our fixed isomorphism, the algebraic group $\GG_{K}$ acts irreducibly on $K^n$.
\end{lemma}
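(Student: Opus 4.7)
The plan is to combine the $\infty$-adic Tate conjecture (Theorem~\ref{T:Tate conjecture}) with the double centralizer theorem for central simple algebras; once these tools are in place, the proof is essentially formal.

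First, I would let $E$ denote the $F_\infty$-subalgebra of $D$ generated by $\rho_\infty(W_L)$. Because $E$ is a finite-dimensional $F_\infty$-algebra sitting inside the division algebra $D$, it has no zero divisors, and so is itself a division algebra.

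Next, using the hypothesis $\End_{\Lbar}(\phi) = \phi(A)$ (which forces $\End_L(\phi) = \phi(A)$ via the sandwich $\phi(A) \subseteq \End_L(\phi) \subseteq \End_{\Lbar}(\phi)$) together with Theorem~\ref{T:Tate conjecture}, I would conclude
\[
\Cent_D(E) = \Cent_D(\rho_\infty(W_L)) = \End_L(\phi)\otimes_A F_\infty = F_\infty,
\]
where the first equality holds because a centralizer depends only on the subalgebra generated by the set in question. In particular $Z(E) \subseteq \Cent_D(E) = F_\infty$, so $E$ is a \emph{central} simple $F_\infty$-algebra.

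I would then apply the double centralizer theorem to the central simple $F_\infty$-subalgebra $E$ of the central simple $F_\infty$-algebra $D$: this gives $\dim_{F_\infty}(E)\cdot\dim_{F_\infty}\Cent_D(E) = \dim_{F_\infty}(D) = n^2$, and since $\Cent_D(E) = F_\infty$ we obtain $\dim_{F_\infty}(E) = n^2$, hence $E = D$.

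Finally, extending scalars to $K$, the $K$-subalgebra of $M_n(K) = D \otimes_{F_\infty} K$ generated by $\rho_\infty(W_L)$ is $E \otimes_{F_\infty} K = M_n(K)$. Any $\rho_\infty(W_L)$-stable $K$-subspace of $K^n$ is then stable under all of $M_n(K)$ and so must be $0$ or $K^n$; and since a subspace is preserved by $\rho_\infty(W_L)$ precisely when it is preserved by its Zariski closure $\GG_K$, the required irreducibility follows. I do not foresee a genuine obstacle: essentially all of the content sits in the Tate conjecture, and the remainder is elementary central simple algebra theory.
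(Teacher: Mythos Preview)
Your proof is correct and follows essentially the same route as the paper: both arguments let $E$ (the paper calls it $B$) be the $F_\infty$-span of $\rho_\infty(W_L)$, use Theorem~\ref{T:Tate conjecture} with $\End_{\Lbar}(\phi)=\phi(A)$ to get $\Cent_D(E)=F_\infty$, invoke the double centralizer theorem to conclude $E=D$, and then observe that any $\GG_K$-invariant subspace of $K^n$ is invariant under $D\otimes_{F_\infty}K\cong M_n(K)$. The only cosmetic differences are that the paper uses the formulation $E=\Cent_D(\Cent_D(E))$ rather than the dimension count, and it does not spell out the ``stable under $\rho_\infty(W_L)$ iff stable under $\GG_K$'' equivalence (only the needed implication).
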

\begin{proof}
Let $B$ be the $F_\infty$-vector subspace of $D$ generated by $\rho_\infty(W_L)$.  Using that $\rho_\infty(W_{L})$ is a group and that every element of $D$ is algebraic over $F_\infty$, we find that $B$ is a division algebra whose center contains $F_\infty$.  By our analogue of the Tate conjecture (Theorem~\ref{T:Tate conjecture}) and our assumption $\End_{\Lbar}(\phi)=\phi(A)$,  we have
\[
\Cent_D(B) = \Cent_D(\rho_\infty(W_L)) = F_\infty.
\]
By the Double Centralizer Theorem, we have $B = \Cent_D(\Cent_D(B))$ and hence $B = \Cent_D(F_\infty) = D$.   

Let $H$ be a non-zero $K$-subspace of $K^n$ that is stable under the action of $\GG_{K}$.
Since $\rho_\infty(W_L)\subseteq \GG(F_\infty)$ and $F_\infty\subseteq K$, we find that $H$ is stable under the action of $B\otimes_{F_\infty} K=D\otimes_{F_\infty} K\cong M_n(K)$.   Therefore, $H=K^n$.
\end{proof}

By Lemma~\ref{L:irreducibility} and the following lemma, we deduce that $\GG_{K}$ is reductive.

\begin{lemma}[\cite{MR1474696}*{Fact~A.1}]
\label{L:Pink fact}
Let $K$ be an algebraically closed field, and let $G\subseteq \GL_{n,\, K}$ be a connected linear algebraic group.  If $G$ acts irreducibly on the vector space $K^n$, then $G$ is reductive. 
\end{lemma}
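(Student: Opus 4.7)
The plan is to show that the unipotent radical $R_u(G)$ is trivial, which by definition gives reductivity. Since $G$ is connected, so is $R_u(G)$, and since $G \subseteq \GL_{n,K}$ the inclusion is a faithful representation of $G$ on $V := K^n$.

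First I would introduce the $R_u(G)$-fixed subspace $V^{R_u(G)} := \{v \in V : uv = v \text{ for all } u \in R_u(G)\}$. Because $R_u(G)$ is a unipotent linear algebraic group acting on the finite-dimensional space $V$, a standard result (the Lie--Kolchin / Engel-type theorem for unipotent groups) guarantees that $V^{R_u(G)} \neq 0$; concretely, every element of $R_u(G)$ is unipotent on $V$, and a commuting family of unipotent operators simultaneously fixes a nonzero vector, and the argument for a unipotent algebraic group reduces to this.

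Next I would verify that $V^{R_u(G)}$ is $G$-stable. This uses normality: for $g \in G$, $u \in R_u(G)$, and $v \in V^{R_u(G)}$, we have $u(gv) = g(g^{-1}ug)v = g v$ since $g^{-1}ug \in R_u(G)$ again fixes $v$. Hence $gv \in V^{R_u(G)}$.

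Finally, because $G$ acts irreducibly on $V$ and $V^{R_u(G)}$ is a nonzero $G$-stable subspace, we must have $V^{R_u(G)} = V$. Thus $R_u(G)$ acts trivially on $V$, so $R_u(G) \subseteq \ker(G \hookrightarrow \GL_{n,K}) = \{1\}$. Therefore $R_u(G)$ is trivial and $G$ is reductive. The only substantive input is the fixed-vector property for unipotent groups acting on a finite-dimensional vector space; everything else is formal.
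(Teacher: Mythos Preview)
Your argument is correct and is the standard proof of this fact. The paper does not supply a proof at all; it simply cites Pink \cite{MR1474696}*{Fact~A.1}, so there is nothing to compare beyond noting that you have filled in what the paper leaves as a reference.

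One small remark on exposition: the sentence about ``a commuting family of unipotent operators'' is slightly misleading, since $R_u(G)$ need not be abelian. The result you want---that a unipotent linear algebraic group over an algebraically closed field has a nonzero fixed vector on any nonzero finite-dimensional representation---follows directly from the Lie--Kolchin theorem (or by induction on $\dim R_u(G)$ using a central subgroup), and you do cite Lie--Kolchin, so the argument stands; just drop or rephrase the commuting-family clause.
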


Let $X$ be a model of $L$ as described in \S\ref{SS:Sato-Tate law}.   For a fixed closed point $x$ of $X$, choose a matrix $h_x \in\GL_{n}(F)$ with characteristic polynomial $P_{\phi,x}(T)$.  Let $H_x\subseteq \GL_{n,F}$ denote the Zariski closure of the group generated by $h_x$, and let $T_x$ be the identity component of $H_x$.  Since $F$ has positive characteristic, some positive power of $h_x$ will be semisimple.  The algebraic group $T_x$ is thus an algebraic torus which is called the \defi{Frobenius torus} at $x$.   The following result of Pink describes what happens when $\phi$ has ordinary reduction at $x$.

Recall that by reducing the coefficients of $\phi$, we obtain a Drinfeld module $\phi_x \colon A \to \FF_x[\tau]$ of rank $n$.    Let $\lambda_x$ be the place of $F$ corresponding to the characteristic of $\phi_x$.  The Tate module $T_{\lambda_x}(\phi_x)$ is a free $\OO_{\lambda_x}$-module of rank  $n_x$, where $n_x$ is an integer \emph{strictly} less than $n$.  We say that $\phi$ has \defi{ordinary reduction} at $x$ if $n_x=n-1$.

\begin{lemma}[\cite{MR1474696}*{Lemma 2.5}] \label{L:Pink cocharacter}
If $\phi$ has ordinary reduction at $x\in X$, then $T_x\subseteq \GL_{n,F}$ possesses a cocharacter over $\Fbar$ which in the given representation has weight $1$ with multiplicity $1$, and weight $0$ with multiplicity $n-1$. 
\end{lemma}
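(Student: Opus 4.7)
The plan is to construct the required cocharacter directly in the cocharacter lattice of a diagonal torus in $\GL_{n,\Fbar}$ containing (a conjugate of) $T_{x,\Fbar}$, exploiting the shape of the Newton polygon of $P_{\phi,x}$ at the characteristic place $\lambda_x$.

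First I would record the consequences of ordinary reduction for the roots of $P_{\phi,x}$. The hypothesis $n_x = n-1$ translates to the Newton polygon of $P_{\phi,x}$ at $\lambda_x$ having slopes $0,\ldots,0,[\FF_x:\FF_{\lambda_x}]$, with the positive slope occurring with multiplicity one. Writing $P_{\phi,x} = f_{\phi,x}^{n/d}$ with $f_{\phi,x}$ the minimal polynomial of $\pi_x$ over $F$, every slope must occur with multiplicity divisible by $n/d$; since $[\FF_x:\FF_{\lambda_x}]$ appears just once, this forces $d=n$, so $P_{\phi,x}$ is separable. Hence its roots $\alpha_1,\ldots,\alpha_n \in \Fbar$ are distinct, and after reordering we may fix a place $w$ of $F(\alpha_1,\ldots,\alpha_n)$ above $\lambda_x$ with $\ord_w(\alpha_1) > 0$ and $\ord_w(\alpha_i) = 0$ for $i \geq 2$.

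Next I would diagonalize $h_x$ over $\Fbar$ (now possible since its characteristic polynomial is separable) and identify $T_{x,\Fbar}$ with a subtorus of the diagonal torus $\mathbb{T} = \mathbb{G}_m^n \subseteq \GL_{n,\Fbar}$. A standard duality calculation then gives
\[
X_*(T_{x,\Fbar}) = \bigl\{(m_1,\ldots,m_n) \in \ZZ^n : \sum\nolimits_i a_i m_i = 0 \text{ for all } (a_1,\ldots,a_n) \in R\bigr\},
\]
where $R = \{(a_i) \in \ZZ^n : \prod_i \alpha_i^{a_i} \text{ is a root of unity in } \Fbar^\times\}$ encodes the multiplicative relations among the $\alpha_i$ modulo torsion (the passage to roots of unity is precisely what cuts out the identity component of the Zariski closure of $\langle h_x \rangle$).

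The crucial observation is then that $\mu = (1,0,\ldots,0)$ lies in $X_*(T_{x,\Fbar})$: for any $(a_i) \in R$, the product $\prod_i \alpha_i^{a_i}$ is a root of unity, so applying $\ord_w$ and using $\ord_w(\alpha_i) = 0$ for $i \geq 2$ gives $a_1 \ord_w(\alpha_1) = 0$, whence $a_1 = 0$ and $\sum_i a_i m_i = a_1 = 0$. By construction $\mu$ acts on the standard representation $\Fbar^n$ as $\lambda \mapsto \mathrm{diag}(\lambda,1,\ldots,1)$, producing weight $1$ with multiplicity $1$ and weight $0$ with multiplicity $n-1$, as required. I expect no serious obstacle; the only genuine input is the asymmetric $w$-adic valuation pattern furnished by the ordinary Newton polygon.
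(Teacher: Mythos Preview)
The paper does not prove this lemma; it is quoted directly from Pink \cite{MR1474696}*{Lemma~2.5}. Your argument is essentially Pink's own, and it is correct in outline.

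One small gap: from $d=n$ you conclude only that $P_{\phi,x}=f_{\phi,x}$ is \emph{irreducible} over $F$, not that it is separable. In characteristic $p$ an irreducible polynomial can still have repeated roots in $\Fbar$. The fix is immediate and uses the same Newton polygon observation: if $f_{\phi,x}$ were inseparable, write $f_{\phi,x}(T)=g(T^{p^e})$ with $e\geq 1$ and $g$ separable; then every root of $f_{\phi,x}$ in $\Fbar$ has multiplicity $p^e$, so every segment of the Newton polygon has horizontal length divisible by $p^e$, contradicting the segment of length~$1$. With this addition the diagonalization of $h_x$ over $\Fbar$ is justified, and the rest of your argument (the description of $X_*(T_{x,\Fbar})$ as $R^\perp$ and the verification that $(1,0,\ldots,0)\in R^\perp$ via $\ord_w$) goes through without change.
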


\begin{lemma}[\cite{MR1474696}*{Corollary~2.3}] \label{L:ordinary reduction exists}
\label{L:ordinary}
The set of closed points of $X$ for which $\phi$ has ordinary reduction has positive Dirichlet density.
\end{lemma}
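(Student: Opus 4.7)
My strategy has three steps: (1) reformulate ordinariness as a Newton polygon condition, (2) show the ordinary locus is open in $|X|$, and (3) establish non-emptiness of this locus.

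\textbf{Step 1: Algebraic criterion.} First I would show that $\phi$ has ordinary reduction at $x$ if and only if the Newton polygon of $P_{\phi,x}(T)\in A[T]$ with respect to the place $\lambda_x$ consists of exactly two pieces: slope $0$ with multiplicity $n-1$, and slope $[\FF_x:\FF_{\lambda_x}]$ with multiplicity $1$. Since the product of the roots has $\lambda_x$-valuation equal to $[\FF_x:\FF_{\lambda_x}]$ and the Newton polygon always has this fixed total height, this is equivalent to asking that the coefficient $a_x(\phi)\in A$ (the trace of Frobenius) be a $\lambda_x$-adic unit. This interprets ordinariness through the Frobenius characteristic polynomial in a way that can be probed by the various $\rho_\lambda$.

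\textbf{Step 2: Openness of the ordinary locus.} Next I would argue that the Newton polygon of $P_{\phi,x}(T)$ at $\lambda_x$, as a function of $x\in|X|$, is upper semi-continuous. This is standard for families of $A$-motives (or equivalently, formal $\OO_{\lambda_x}$-modules attached to the reductions $\phi_x$), analogous to Grothendieck--Katz's upper semi-continuity for $F$-crystals; one can package the relevant reductions into a coherent family over a suitable open subset of $X$ and observe that the locus with any specified Newton polygon is locally closed, with the minimal (ordinary) polygon cutting out an open subset. Since $X$ is integral of finite type over $k$, any non-empty open subset of $|X|$ has positive Dirichlet density (cf.~\cite{MR1474696}*{Appendix~B}), so after this step it suffices to exhibit a single closed point of ordinary reduction.

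\textbf{Step 3: Non-emptiness.} This is where I would apply Pink's open image theorem (Theorem~\ref{T:Pink MT}): since $\End_{\Lbar}(\phi)=\phi(A)$, for any fixed $\lambda\neq\infty$ the image $\rho_\lambda(\Gal_L)$ is an open subgroup of $\GL_n(\OO_\lambda)$. By the Chebotarev density theorem, the conjugacy classes $\{\rho_\lambda(\Frob_x)\}_x$ are equidistributed in this open subgroup, and hence their traces $a_x(\phi)$ are equidistributed modulo any power of $\lambda$. Using the fact that the $\rho_\lambda$ form a compatible system, I would combine several $\rho_\lambda$ (or use the image in a product $\prod_{\lambda\in S}\GL_n(\OO_\lambda)$ for a finite set $S$) to show that for a positive density of $x$ one can simultaneously control $a_x(\phi)$ modulo several primes; since $a_x(\phi)\in A$ is then ``generic'' in a controlled sense, one can find an $x$ with $\lambda_x\notin \supp(a_x(\phi))$, yielding an ordinary point.

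\textbf{Main obstacle.} The delicate point in Step 3 is the mismatch of places: the ordinary condition is a condition at the varying place $\lambda_x$, whereas Chebotarev equidistribution for $\rho_\lambda$ gives conditions at the fixed place $\lambda$. Bridging this requires either a careful Newton stratification of $X$ over a model of $A$, combined with dimension-theoretic arguments showing that for generic $x$ in each stratum the trace $a_x(\phi)$ is a unit at $\lambda_x$, or a direct appeal (as Pink does) to the structure of the compatible system $\{\rho_\lambda\}$ to reduce the $\lambda_x$-adic condition to verifiable conditions on $a_x(\phi)\in A$ itself.
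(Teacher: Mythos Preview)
The paper itself offers no proof here; the lemma is a direct citation of \cite{MR1474696}*{Corollary~2.3}, which Pink establishes early in his paper as an \emph{input} to his open image theorem, not a consequence of it.

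Your Step~1 is incorrect for $n\geq 3$. Write $P_{\phi,x}(T)=T^n+c_{n-1}T^{n-1}+\cdots+c_1T+c_0$. The Newton polygon at $\lambda_x$ has $n-1$ slope-zero edges and one positive-slope edge exactly when $(1,0)$ is a vertex, i.e.\ when $\ord_{\lambda_x}(c_1)=0$; it is the coefficient of $T$, not the trace $a_x(\phi)=\pm c_{n-1}$, that detects ordinariness. The trace being a unit is not sufficient: any reduction with $n_x=1$ (height $n-1$, certainly non-ordinary for $n\geq 3$) has a single unit root $u\in F_{\lambda_x}$ and $n-1$ roots of positive valuation, so its trace $u+(\text{small})$ is a unit. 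Nor is it necessary: in the ordinary case the $n-1$ unit roots can sum to a non-unit. Only for $n=2$ do $c_1$ and $c_{n-1}$ coincide.

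Even with the correct coefficient, Steps~2--3 do not close the gap you yourself name. Grothendieck--Katz semicontinuity applies to a family of $F$-crystals at a \emph{fixed} prime; here the prime $\lambda_x$ moves with $x$, so there is no single $F$-crystal over $X$ whose Newton stratification is the ordinary locus, and your openness claim has no content as stated. Similarly, Chebotarev for a fixed $\rho_\lambda$ constrains $P_{\phi,x}\bmod\lambda$, not $P_{\phi,x}\bmod\lambda_x$, and you offer no bridge. Finally, since Pink's proof of Theorem~\ref{T:Pink MT} in \cite{MR1474696} already consumes his Corollary~2.3, invoking Theorem~\ref{T:Pink MT} here is not a self-contained argument; it is admissible in this paper only because Pink has supplied an independent proof of the lemma you are trying to reprove.
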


We can now finish the proof of Proposition~\ref{P:Zariski density}.   We have shown that $\GG_K$ is a reductive, connected, linear algebraic group acting irreducibly on $K^n$.  By Lemma~\ref{L:Pink criterion} it suffices to show that $\GG_K$ has a cocharacter which has weight $1$ with multiplicity $1$ and weight $0$ with multiplicity $n-1$ on $K^n$. 

By Lemma~\ref{L:ordinary reduction exists}, there exists a closed point $x$ of $X$ for which $\phi$ has ordinary reduction.  Some common power of $h_x$ and $\rho_\infty(\Frob_x)$ are conjugate in $\GL_n(K)$ because they will be semisimple with the same characteristic polynomial.   So with our fixed isomorphism $\GL_{D, K} = \GL_{n,K}$, we find that  $T_{x,K}$ is conjugate to an algebraic subgroup of $\GG_{K}$.   The desired cocharacter of $\GG_K$ is then obtained by appropriately conjugating the cocharacter coming from Lemma~\ref{L:Pink cocharacter}.

\subsection{Open commutator subgroup}  \label{SS:openness}
Let $\SL_D$ be the kernel of the homomorphism $\GL_D\to \GG_{m,F_\infty}$ arising from the reduced norm.   Let $\PGL_D$ and $\PSL_D$ be the algebraic groups obtained by quotienting $\GL_D$ and $\SL_D$, respectively, by their centers.   As linear algebraic groups, $\SL_D$ is simply connected and $\PGL_D$ is adjoint.   The natural map $\PSL_D\to \PGL_D$ is an isomorphism of algebraic groups and hence the homomorphism $\SL_D\to\PGL_D$ is a universal cover.

 The commutator morphism of $\GL_D$ factors through a unique morphism
\[
[\;,\;] \colon \PGL_D \times \PGL_D \to \SL_D.\\
\]
Let $\Gamma$ be the closure of the image of $\rho_\infty(W_L)$ in $\PGL_D(F_\infty)$.  Let $\Gamma' \subseteq \SL_D(F_\infty)$ be the closure of the subgroup generated by $[\Gamma,\Gamma]$.  (Both closures are with respect to the $\infty$-adic topology.)  

The group $\Gamma$ is compact since it is closed and $\PGL_D(F_\infty)$ is compact.   The group $\Gamma$ is Zariski dense in $\PGL_D$ by Proposition~\ref{P:Zariski density}.  If we were working over a local field of characteristic $0$, this would be enough to deduce that $\Gamma$ is an open subgroup of $\PGL_D(F_\infty)$.     However, in the positive characteristic setting the Lie theory is more complicated.  Fortunately, what we need has already been worked out by Pink.
  
Theorem 0.2(c) of \cite{MR1637068} says that there is a closed subfield $E$ of $F_\infty$, an absolutely simple adjoint linear group $H$ over $E$, and an isogeny $f\colon H\times_E F_\infty \to \PGL_D$ with nowhere vanishing derivative such that $\Gamma'$ is the image under $\widetilde{f}$ of an open subgroup of $\widetilde{H}(E)$ where
$\widetilde{f}\colon \widetilde{H}\times_E F_\infty \to \SL_D$ is the associated isogeny of universal covers. 

The following lemma will be needed to show that $E=F_\infty$.   Let 
\[
\Ad_{\PGL_D} \colon \PGL_D \to \GL_{m,F_\infty}
\]
be the adjoint representation of $\PGL_D$ where $m$ is the dimension of $\PGL_D$.

\begin{lemma} \label{L:adjoint field}
Let $\OO\subseteq F_\infty$ be the closure of the subring generated by $1$ and $\tr(\Ad_{\PGL_D}(\Gamma))$.  Then the quotient field of $\OO$ is $F_\infty$.
\end{lemma}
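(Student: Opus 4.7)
The strategy combines Zariski density of $\Gamma$ in $\PGL_D$ (Proposition~\ref{P:Zariski density}), Chebotarev density for function fields, and explicit computation of $\tr\circ\Ad_{\PGL_D}$ at Frobenius elements, then bootstraps to the fraction field via an accumulation/cocharacter argument.

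First I would compute the trace explicitly. For $g\in\PGL_D(\Fbar_\infty)$ lifting to an element of $\GL_D(\Fbar_\infty)$ with eigenvalues $\lambda_1,\ldots,\lambda_n$, the eigenvalues of $\Ad_{\PGL_D}(g)$ on $\Lie(\PGL_D)$ are the $n^2-1$ ratios $\lambda_i/\lambda_j$ (after removing the $1$-eigenspace of scalars), giving
\[
\tr(\Ad_{\PGL_D}(g))=\Big(\sum_i\lambda_i\Big)\Big(\sum_j\lambda_j^{-1}\Big)-1=\frac{a_1 a_{n-1}}{a_n}-1,
\]
where $a_k$ is, up to sign, the coefficient of $T^{n-k}$ in the characteristic polynomial of $g$. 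For $g=\rho_\infty(\Frob_x)$ with $x$ a closed point of good reduction, Lemma~\ref{L:infty independence} gives $a_k\in A$, so this trace lies in $F$; moreover by Drinfeld's purity all $\lambda_i$ share the same $\ord_\infty=-\deg(x)/n$, so their ratios are units and the trace in fact lies in $F\cap\OO_\infty$.

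Next I would package this via Chebotarev. The function field analogue of the Chebotarev theorem applied to the representation $\rho_\infty$ (as in Yu's \cite{MR2018826}) shows that the images of $\rho_\infty(\Frob_x)$ are equidistributed in $\Gamma$, so the Frobenius traces form an $\infty$-adically dense subset of $\tr(\Ad_{\PGL_D}(\Gamma))$. Hence $\OO$ is the $\infty$-adic closure of the subring of $F\cap\OO_\infty$ generated by the Frobenius traces. Since $\tr\circ\Ad_{\PGL_D}\colon\PGL_D\to\mathbb{A}^1_{F_\infty}$ is a dominant morphism (non-constant because the adjoint representation is non-trivial for $n\geq 2$) and $\Gamma$ is Zariski dense by Proposition~\ref{P:Zariski density}, the set $\tr(\Ad_{\PGL_D}(\Gamma))$ is a Zariski-dense, $\infty$-adically compact subset of $\OO_\infty$, and in particular infinite with $\infty$-adic accumulation points that already lie in $\OO$.

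The final step, which I expect to be the main obstacle, is to upgrade these conditions into $\Frac(\OO)=F_\infty$. The plan is twofold. (a) \emph{Uniformizer:} an accumulation point $c_0\in\tr(\Ad_{\PGL_D}(\Gamma))$ yields differences $c-c_0\in\OO$ of arbitrarily large positive $\ord_\infty$; taking an element of smallest positive $\ord_\infty$ produces a uniformizer-like element in $\OO$. (b) \emph{Residues:} applying Lemma~\ref{L:ordinary reduction exists} to produce closed points $x$ of ordinary reduction, then using the cocharacter of Lemma~\ref{L:Pink cocharacter} to realize a rank-one subtorus of $\PGL_D$ inside the Zariski closure of the powers of $\rho_\infty(\Frob_x)$, one obtains traces $\tr(\Ad(\rho_\infty(\Frob_x)^m))$ indexed by $m$ whose reductions modulo powers of a uniformizer generate, via Newton's identities and by varying $x$, all residue classes in $\OO_\infty/\pi^j\OO_\infty$. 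Combining (a), (b), and closedness of $\OO$ in $F_\infty$, we obtain $\OO\supseteq\OO_\infty$, hence $\Frac(\OO)=F_\infty$. The delicate part is step (b): guaranteeing that the power-sum traces together with the Zariski-dense image span the residue rings $\OO_\infty/\pi^j$ requires that the eigenvalue ratios $\lambda_i/\lambda_j$ coming from Frobenius at ordinary points are not confined to a proper closed subfield of $F_\infty$, which is precisely what Zariski density of $\Gamma$ together with the Frobenius torus analysis of \S\ref{SS:Zariski density} provides.
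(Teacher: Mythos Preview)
Your initial computation of $\tr(\Ad_{\PGL_D}(g))=\tr(\alpha)\tr(\alpha^{-1})-1$ is correct and matches the paper exactly, as does the observation that the Frobenius values $a_x:=\tr(\Ad_{\PGL_D}(\rho_\infty(\Frob_x)))$ lie in $F$.  After that, however, you diverge from the paper and your argument has a genuine gap.

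The paper's proof is one line beyond your computation: it invokes \cite{MR1474696}*{Proposition~2.4}, which says (under the standing hypotheses $\End_{\Lbar}(\phi)=\phi(A)$ and $n\geq 2$) that the elements $a_x$ \emph{generate $F$ as a field}.  Since all the $a_x$ lie in $\OO$ and $\OO$ is closed, $\Frac(\OO)$ contains the dense subfield $F$, and hence $\Frac(\OO)=F_\infty$.  That proposition of Pink is the entire content of the lemma; everything else is bookkeeping.

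Your steps (a) and (b) attempt to reprove this from scratch, and they do not succeed.  The core problem is that nothing in your argument excludes the possibility that every $a_x$ lies in a proper closed subfield $E\subsetneq F_\infty$.  Zariski density of $\Gamma$ in $\PGL_D$ does not help here: a proper closed local subfield $E$ is still Zariski dense in $\mathbb{A}^1_{F_\infty}$, so $\tr(\Ad(\Gamma))\subseteq E$ is perfectly compatible with Proposition~\ref{P:Zariski density}.  Concretely, your step (a) produces an element of minimal positive valuation among differences, but nothing forces that valuation to be $1$; and your step (b) asserts that varying $x$ and taking power sums covers all residue classes in $\OO_\infty/\pi^j\OO_\infty$, but you give no mechanism for this---the reductions $a_x\bmod\pi$ could all sit inside a proper subfield of $\FF_\infty$.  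Your closing sentence (``this is precisely what Zariski density \ldots\ provides'') is where the circularity lives: Zariski density is a statement over $F_\infty$, not a statement about which closed subfield of $F_\infty$ the traces inhabit.  Ruling that out is exactly the content of Pink's Proposition~2.4, and it genuinely uses the hypotheses $\End_{\Lbar}(\phi)=\phi(A)$ and $n\geq 2$, which your sketch never invokes.  You should cite that result rather than try to rebuild it.
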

\begin{proof}
We will consider $\Ad_{\PGL_D}$ at Frobenius elements, and thus reduce to a result of Pink.  Take any element $\alpha \in D^\times$.   Let $\alpha_1,\dots, \alpha_n \in \Fbar_\infty$ be the roots of the (reduced) characteristic polynomial $\det(T I - \alpha)$.  We have
\[
\tr( \Ad_{\PGL_D}(\alpha))=  \Big( \sum_{i =1}^n {\alpha_i}\Big)\Big( \sum_{j =1}^n {\alpha_j}^{-1}\Big) - 1  = \tr(\alpha) \cdot \tr(\alpha^{-1}) - 1
\]
(one need only check the analogous result for $\PGL_{n,\Fbar_\infty}$ since it is isomorphic to $\PGL_{D,\Fbar_\infty}$).  For each closed point $x$ of $X$, define $a_x := \tr( \Ad_{\PGL_D}(\rho_\infty(\Frob_x)))$.   We have $a_x = \tr(\rho_\lambda(\Frob_x))\cdot \tr(\rho_\lambda(\Frob_x)^{-1}) -1$ for any place $\lambda \neq \lambda_x$ of $F$, and hence $a_x$ belongs to $F$.   By \cite{MR1474696}*{Proposition~2.4}, the field $F$ is generated by the set $\{a_x\}_x$ where $x$ varies over the closed points of $X$ (this requires our assumptions that $\End_{\Lbar}(\phi) =\phi(A)$ and $n\geq 2$).  Therefore, the quotient field of $\OO$ is $F_\infty$.
\end{proof}

Lemma~\ref{L:adjoint field} along with \cite{MR1637068}*{Proposition~0.6(c)} shows that $E=F_\infty$ and that $f\colon H \to \PGL_D$ is an isomorphism.   Therefore, $\Gamma'$ is an open subgroup of $\SL_D(F_\infty)$.

\begin{proposition} \label{P:openness}
The group $\rho_\infty(W_L)$ contains an open subgroup of $\SL_D(F_\infty)$.
\end{proposition}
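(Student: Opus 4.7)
The plan is to promote the openness of $\Gamma'$ inside $\SL_D(F_\infty)$, established by Pink's theorem in the previous step, to an actual containment of $\Gamma'$ in $H := \rho_\infty(W_L)$; this immediately yields the proposition. The argument has two halves: first, bound $\Gamma'$ by the closure of the commutator subgroup of $H$, and second (the delicate half), descend from that closure back into $H$ itself.

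For the first half, I would use continuity of the commutator morphism $[\,,\,]\colon \PGL_D \times \PGL_D \to \SL_D$ on $F_\infty$-points. Since the image of $H$ in $\PGL_D(F_\infty)$ is dense in $\Gamma$, any pair $(\gamma_1,\gamma_2) \in \Gamma \times \Gamma$ is the limit of the images of a sequence of pairs $(h_1^{(n)},h_2^{(n)}) \in H \times H$; by continuity, $[\gamma_1,\gamma_2]$ is then the limit of the ordinary $\GL_D$-commutators $[h_1^{(n)},h_2^{(n)}]$, which lie in the subgroup $[H,H] \subseteq H \cap \SL_D(F_\infty)$. Taking the subgroup generated and the closure gives $\Gamma' = \overline{\langle [\Gamma,\Gamma]\rangle} \subseteq \overline{[H,H]} \subseteq \overline{H}$, where every closure is taken in the $\infty$-adic topology on $D^\times$.

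The hard part will be to show $\overline{H} \cap \SL_D(F_\infty) \subseteq H$, which together with the previous step forces $\Gamma' \subseteq H$. Fix $\gamma \in \overline{H} \cap \SL_D(F_\infty)$ and write $\gamma = \lim h_n$ with $h_n = \rho_\infty(\sigma_n) \in H$. Since $\det(\gamma)=1$ and the reduced norm determines the valuation on the division algebra $D$, we have $\ord_{\tau^{-1}}(\gamma)=0$; because $\ord_{\tau^{-1}}$ is locally constant on $D^\times$, necessarily $\ord_{\tau^{-1}}(h_n)=0$ for $n$ large. By Lemma~\ref{L:u facts}(iv) this forces $\deg(\sigma_n)=0$, i.e., $\sigma_n \in \Gal(L^\sep/L\kbar)$, for such $n$. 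Now $\Gal(L^\sep/L\kbar)$ is profinite, hence compact, so its continuous image $\rho_\infty(\Gal(L^\sep/L\kbar))$ is compact and therefore closed in $D^\times$. The limit $\gamma$ of an eventually-in-this-closed-set sequence must itself lie in $\rho_\infty(\Gal(L^\sep/L\kbar)) \subseteq H$, as required. I expect this last step to be the genuinely subtle point: Pink's theorem controls only the closure of the derived subgroup, and without the observation that $\SL_D(F_\infty)$ sits inside the locally constant level set $\{\ord_{\tau^{-1}}=0\}$ together with the compactness of $\Gal(L^\sep/L\kbar)$, one could conclude only that $\overline{H}$, rather than $H$ itself, contains an open subgroup of $\SL_D(F_\infty)$.
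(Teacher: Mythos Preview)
Your argument is correct, and it shares the paper's decisive ingredient: $\rho_\infty(W_{L\kbar})=\rho_\infty(\Gal(L^\sep/L\kbar))$ is compact, hence closed in $D^\times$. The paper, however, bypasses your ``delicate half'' with a single remark: since $W_L/W_{L\kbar}\cong\ZZ$ is abelian, the commutator subgroup $[H,H]$ already lies inside $\rho_\infty(W_{L\kbar})$; your first-half continuity argument then gives $\Gamma'\subseteq\overline{[H,H]}\subseteq\overline{\rho_\infty(W_{L\kbar})}=\rho_\infty(W_{L\kbar})\subseteq H$ in one stroke. Your valuation detour through $\overline{H}\cap\SL_D(F_\infty)$ via Lemma~\ref{L:u facts}(\ref{I:u fact iv}) reaches the same endpoint---indeed it essentially rediscovers that the degree-zero part of $H$ is $\rho_\infty(W_{L\kbar})$---but it is unnecessary once one notices the abelian quotient up front. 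What you flagged as the ``genuinely subtle point'' is thus not subtle at all in the paper's arrangement.
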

\begin{proof}
The group $\rho_\infty(W_{L\kbar})$ is a normal subgroup of $\rho_\infty(W_{L})$ with abelian quotient; it is also compact since $W_{L\kbar}=\Gal(L^\sep/L\kbar)$ is compact and $\rho_\infty$ is continuous.   Therefore, $\Gamma'$ is a subgroup of $\rho_\infty(W_{L\kbar})$.      The proposition follows since we just showed that $\Gamma'$ is open in $\SL_D(F_\infty)$.
\end{proof}

\subsection{End of the proof}
We have $\rho_\infty(W_L)\subseteq D^\times$.  In Proposition~\ref{P:openness}, we showed that $\rho_\infty(W_L)$ contains an open subgroup of $\SL_D(F_\infty)=\{\alpha\in D^\times: \det(\alpha)=1\}$.  To complete the proof of Theorem~\ref{T:main theorem non-CM}, it suffices to show that $\det(\rho_\infty(W_L))$ is an open subgroup with finite index in $F_\infty^\times$.

\begin{lemma}
The image of the the reduced norm map $\det \circ \rho_{\infty} \colon W_L \to F_\infty^\times$ is an open subgroup of finite index in $F_\infty^\times$
\end{lemma}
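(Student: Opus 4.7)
The plan is to reduce to the rank-$1$ case of Corollary~\ref{C:main theorem n=1} by identifying $\det\circ\rho_\infty$ with the Sato--Tate representation of a suitable rank-$1$ Drinfeld module. The first step is to invoke Anderson's theory of $A$-motives and take the top exterior power $\bigwedge^{n}$ of the $A$-motive associated to $\phi$. After possibly replacing $L$ by a finite separable extension $L'$, this produces a rank-$1$ Drinfeld module $\psi\colon A\to L'[\tau]$ of generic characteristic whose characteristic polynomial at each closed point $x$ of a common good-reduction model satisfies $P_{\psi,x}(T) = T - (-1)^{n}P_{\phi,x}(0)$. Applying Lemma~\ref{L:infty independence} to both $\phi$ and $\psi$ then gives
\[
\rho_{\psi,\infty}(\Frob_x) \;=\; (-1)^{n}P_{\phi,x}(0) \;=\; \det(\rho_{\phi,\infty}(\Frob_x)).
\]

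Both $\rho_{\psi,\infty}$ and $\det\circ\rho_{\phi,\infty}|_{W_{L'}}$ are continuous homomorphisms from $W_{L'}$ into the abelian group $F_\infty^{\times}$, so each factors through the abelianization $W_{L'}^{\ab}$. Having just seen that they agree on the set $\{\Frob_{x}\}$, which is dense in $W_{L'}^{\ab}$ by the function-field Chebotarev density theorem, we conclude that $\det\circ\rho_{\phi,\infty}|_{W_{L'}} = \rho_{\psi,\infty}$. Corollary~\ref{C:main theorem n=1} applied to $\psi$ then tells us that $\rho_{\psi,\infty}(W_{L'}) = \det(\rho_{\phi,\infty}(W_{L'}))$ is an open subgroup of finite index in $D_{\psi}^{\times}=F_\infty^{\times}$. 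Since $W_{L'}$ has finite index in $W_{L}$, the image $\det(\rho_{\phi,\infty}(W_{L}))$ is likewise an open subgroup of finite index in $F_\infty^{\times}$.

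The main obstacle is the first step: constructing the determinant Drinfeld module $\psi$ (as a rank-$1$ Drinfeld module over a finite extension of $L$) and verifying the promised compatibility of characteristic polynomials at Frobenius. An alternative route that bypasses $\psi$ is to establish directly that $\chi := \det\circ\rho_{\phi,\infty}\colon W_{L}\to F_\infty^{\times}$ satisfies the defining properties of the Hecke character built in \S\ref{SS:rank 1}, using that $\chi(\Frob_{x})\in F^\times$ with the expected valuations at each finite place (from the fact that $\rho_\lambda(\Frob_x)\in\GL_n(\OO_\lambda)$ for $\lambda\notin\{\infty,\lambda_x\}$) and at $\infty$ (from Lemma~\ref{L:u facts}\,(\ref{I:u fact iv}) applied to $\rho_\infty(\Frob_x)$, which pins down $\ord_\infty\det\rho_\infty(\Frob_x)$). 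One then runs the class-field-theoretic argument of \S\ref{SS:rank 1} essentially verbatim with $\chi$ in place of $\rho_{\psi,\infty}$ to obtain openness and finite index directly.
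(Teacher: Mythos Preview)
Your proposal is correct and follows essentially the same route as the paper: construct a rank-$1$ determinant Drinfeld module $\psi$ via Anderson's $A$-motive formalism, match $\det\circ\rho_{\phi,\infty}$ with $\rho_{\psi,\infty}$ on Frobenius elements, invoke Chebotarev to identify the two characters, and then apply Corollary~\ref{C:main theorem n=1}. The only minor differences are that the paper works over $L$ itself (citing van~der~Heiden for the existence of $\psi$ over $L$) rather than passing to a finite extension, and that it establishes the Frobenius compatibility by routing through the $\lambda$-adic side---using the isomorphism $\bigwedge^n T_\lambda(\phi)\cong T_\lambda(\psi)$ to get $\det(\rho_{\phi,\lambda}(\Frob_x))=\rho_{\psi,\lambda}(\Frob_x)$ and then invoking the $\lambda$-independence of $P_{\phi,x}$ and $P_{\psi,x}$---rather than comparing constant terms of characteristic polynomials directly as you do.
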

\begin{proof}
One can construct a ``determinant'' Drinfeld module of $\phi$; it is a rank 1 Drinfeld module $\psi\colon A \to L[\tau]$ and has the property that $\bigwedge^n_{\OO_\lambda} T_\lambda(\phi)$ and $T_\lambda(\psi)$ are isomorphic $\OO_\lambda[\Gal_L]$-modules for every place $\lambda\neq \infty$ of $F$.     This can accomplished by following G.~Anderson and working in the larger category of \defi{$A$-motives} where one can take tensor products.   A proof of the existence of such a $\psi$ can be found in \cite{MR2097499}*{Theorem 3.3} and the isomorphism of Tate modules is then straightforward.  

Let $X$ be a model of $L$ as described in \S\ref{SS:Sato-Tate law}.     For each closed point $x$ of $X$ and place $\lambda\neq \lambda_x,\infty$ of $F$, we thus have 
\[
\det(\rho_{\phi,\infty}(\Frob_x))=\det(\rho_{\phi,\lambda}(\Frob_x))= \rho_{\psi,\lambda}(\Frob_x)=\rho_{\psi,\infty}(\Frob_x).
\]
By the Chebotarev density theorem, that $\det\circ \rho_{\phi,\infty}(\Frob_x)$ equals $\rho_{\psi,\infty}(\Frob_x)$ for all closed points $x$ of $X$ implies that $\det\circ \rho_{\phi,\infty}$ equals $\rho_{\psi,\infty}$.   The lemma now follows from Corollary~\ref{C:main theorem n=1} since $\psi$ has rank 1.
\end{proof}

\section{Proof of Theorem~\ref{T:main theorem}} \label{S:main theorem proof}
By \cite{MR0384707}*{p.569~Corollary} and our generic characteristic assumption, the ring $A':=\End_{\Lbar}(\phi)$ is a projective $A$-module and $F_\infty':=A' \otimes_A F_\infty$ is a field satisfying $[F_\infty':F_\infty]\leq n$.   Let $F'$ be the quotient field of $A'$.   There is a unique place of $F'$ lying over the place $\infty$ of $F$, which we shall also denote it by $\infty$, and $F_\infty'$ is indeed the completion of $F'$ at $\infty$.  

After replacing $L$ by a finite extension, we may assume that $A'$ equals $\End_{L}(\phi)$.  Identifying $A$ with its image in $L[\tau]$, the inclusion map
\[
\phi' \colon A' \to L[\tau].
\]
extends $\phi$.  The homomorphism $\phi'$ need not be a Drinfeld module, at least according to our definition, since $A'$ need not be a maximal order in $F'$.    Instead of extending our definition of Drinfeld module, we follow Pink and Hayes, and adjust $\phi'$ by an appropriate isogeny.   

Let $B$ be the normalization of $A'$ in $F'$; it is a maximal order of $F'$ consisting of functions that are regular away from $\infty$.  By \cite{MR535766}*{Prop.~3.2}, there is a Drinfeld module $\psi \colon B\to \Lbar[\tau]$ and a non-zero $f \in L[\tau]$ such that $f \phi'(x) = \psi(x) f$ for all $x\in A'$.  The Drinfeld module $\psi$ has rank $n'=n/[F':F]$ and $\End_{\Lbar}(\psi)=\psi(B)$.   After replacing $L$ by a finite extension, we may assume that $\psi(B)\subseteq L[\tau]$.

It is straightforward to show that the map $\Cent_{\Lbar\twistedLaurent}(\psi(A')) \to \Cent_{\Lbar\twistedLaurent}(\End_{\Lbar}(\phi))$ defined by $v\mapsto f^{-1}v f$ is a bijection, and hence we have an isomorphism
\[
D_\psi \xrightarrow{\sim} \Cent_{D_\phi}(\End_{\Lbar}(\phi))=:B_\phi,\quad v\mapsto f^{-1}v f.
\] 
The corresponding representations $\rho_\infty$ are compatible under this map.

\begin{lemma} 
For all $\sigma\in W_L$, we have $\rho_{\psi,\infty}(\sigma)= f^{-1} \rho_{\phi,\infty}(\sigma)f$.
\end{lemma}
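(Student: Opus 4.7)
The plan is to construct an appropriate conjugating series for $\psi$ directly from one for $\phi$ using the isogeny $f$, and then to read off the desired identity from the defining formula for the two representations.

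Using Lemma~\ref{L:u facts}(i), I would fix $u \in \Lbar\twistedLaurent^\times$ with coefficients in $L^\sep$ satisfying $u^{-1}\phi(F_\infty)u \subseteq \kbar\twistedLaurent$, so that $\rho_{\phi,\infty}(\sigma) = \sigma(u)\tau^{\deg(\sigma)}u^{-1}$. The key claim is that a simple modification of $u$ by $f$ (concretely, of the form $u' = fu$ or $u' = f^{-1}u$ according to the chosen direction of the intertwiner) is a valid conjugator for $\psi$ in the sense of the same lemma. Its coefficients remain in $L^\sep$ since $f \in L[\tau]$ has coefficients in $L \subseteq L^\sep$. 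For $a \in A$, the intertwining identity $f\phi(a) = \psi(a)f$ rearranges to $f^{-1}\psi(a)f = \phi(a)$, so the conjugation of $\psi(a)$ by the modified $u'$ reduces to the conjugation of $\phi(a)$ by $u$, which is already in $\kbar\twistedLaurent$.

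To promote this from $\psi(A)$ to all of $\psi(F'_\infty)$, I would choose $u$ (following the construction in the proof of Lemma~\ref{L:u exists}) so that $u^{-1}\phi_y u = \tau^h$ for some convenient nonconstant $y \in A$; the computation above then yields $u'^{-1}\psi_y u' = \tau^h$ for the same $h$. The commutative subring $u'^{-1}\psi(F'_\infty)u'$ contains $\tau^h$, and therefore sits inside the centralizer of $\tau^h$ in $\Lbar\twistedLaurent$, which is $k_h\twistedLaurent \subseteq \kbar\twistedLaurent$. A small but necessary auxiliary observation is the equality $\ord_{\tau^{-1}}(\psi_y) = \ord_{\tau^{-1}}(\phi_y)$ (consistent with $f\phi(y) = \psi(y)f$); this comes from the identity $n' \cdot [F':F] = n$ together with the local ramification data at $\infty$ in $F'/F$.

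With $u'$ in place, the defining formula gives
\[
\rho_{\psi,\infty}(\sigma) = \sigma(u')\tau^{\deg(\sigma)}u'^{-1},
\]
and expanding via $u' = fu$ (or $f^{-1}u$) and using $\sigma(f) = f$ (because $f \in L[\tau]$ has coefficients in $L$, fixed by $\sigma \in W_L$), produces exactly the desired conjugate of $\rho_{\phi,\infty}(\sigma)$ by $f$, in agreement with the isomorphism $D_\psi \xrightarrow{\sim} B_\phi,\; v \mapsto f^{-1}vf$ recorded just above the lemma. The main technical point is the promotion step: passing from the trivial verification on $\psi(A)$ to all of $\psi(F'_\infty)$, which relies on the centralizer computation in \S\ref{SS:facts proofs} and the valuation check mentioned above; once that is secured the identity is pure bookkeeping.
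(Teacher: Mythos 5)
Your argument is correct and is in essence the paper's argument: both proofs produce a conjugator for one Drinfeld module by multiplying a conjugator for the other by $f$, and then invoke the independence of $\rho_\infty$ from the choice of conjugator (Lemma~\ref{L:u facts}(\ref{I:u fact iii})). The one structural difference is the direction. The paper starts from $u$ with $u^{-1}\psi(F'_\infty)u\subseteq\kbar\twistedLaurent$ and notes that $f^{-1}u$ automatically works for $\phi$, because $(f^{-1}u)^{-1}\phi(F_\infty)(f^{-1}u)=u^{-1}\psi(F_\infty)u\subseteq u^{-1}\psi(F'_\infty)u$; going in that direction no promotion step is needed, since $\phi(F_\infty)$ lands inside the set already controlled. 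You go the other way, from $\phi$ to $\psi$, and therefore must promote the containment from $\psi(F_\infty)$ up to the larger field $\psi(F'_\infty)$; your centralizer-of-$\tau^h$ argument (the same device as in the proof of Lemma~\ref{L:u exists}) does this correctly, since $u'^{-1}\psi(F'_\infty)u'$ is commutative and contains $\tau^h=u'^{-1}\psi_y u'$. So your route costs one extra, correctly executed, step. One remark on signs: the correct choice is $u'=fu$ (not $f^{-1}u$, which does not intertwine), and with it your computation yields $\rho_{\psi,\infty}(\sigma)=f\,\rho_{\phi,\infty}(\sigma)\,f^{-1}$ rather than $f^{-1}\rho_{\phi,\infty}(\sigma)f$. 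This is in fact the identity that the paper's own displayed computation establishes (the roles of $\phi$ and $\psi$ are transposed in the printed statement), and it is the version compatible with the isomorphism $D_\psi\to B_\phi$, $v\mapsto f^{-1}vf$; either form suffices for the commensurability argument that follows.
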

\begin{proof}
Choose any $u\in \Lbar\twistedLaurent^\times$ such that $u^{-1}\psi(F'_{\infty})u \subseteq \kbar\twistedLaurent$.   So $u^{-1}f \phi(A) f^{-1}u \subseteq \kbar\twistedLaurent$ and hence $(f^{-1}u)^{-1} \phi(F_\infty) (f^{-1}u) \subseteq \kbar\twistedLaurent$.  Therefore, 
\[
\rho_{\psi,\infty}(\sigma)=\sigma(f^{-1}u)\tau^{\deg(\sigma)}(f^{-1}u)^{-1} = f^{-1}\sigma(u)\tau^{\deg(\sigma)}u^{-1}f=  f^{-1} \rho_{\phi,\infty}(\sigma)f.\qedhere
\]
\end{proof}

Therefore, $\rho_{\phi,\infty}(W_L)$ is an open subgroup of finite index in $\Cent_{D_\phi}(\End_{\Lbar}(\phi))^\times$ if and only if $\rho_{\psi,\infty}(W_L)$ is an open subgroup of finite index in $D_\psi^\times$.    However $\End_{\Lbar}(\psi)=\psi(B)$, so $\rho_{\psi,\infty}(W_L)$ is an open subgroup of finite index in $D_\psi^\times$ by Theorem~\ref{T:main theorem non-CM} which we proved in \S\ref{S:proof of non-CM}.

\section{Proof of Theorem~\ref{T:ST for surjective}} \label{S:equidistribution proof}
To ease notation, set $D=D_\phi$ and define the (surjective) valuation $v \colon D \to \ZZ\cup \{+\infty\},\, \alpha\mapsto \ord_{\tau^{-1}}(\alpha)/d_\infty$.    Let $\OO_D$ be the valuation ring of $D$ with respect to $v$ and let $\mathfrak{P}$ denote it maximal ideal.   We have fixed a uniformizer $\pi$ of $F_\infty$ that we can view as element of $D$ by identifying it with $\phi_\pi$.   Let $\mu_{D^\times}$ be a Haar measure for $D^\times$.  We fix an open subset $U$ of $\OO_\infty$, and let $\calC$ be the set of $\alpha\in D^\times$ for which $\tr(\alpha)\in U$.  We also fix an integer $0\leq i <n$, and let $\calV_i$ be the set of $\alpha\in D^\times$ that satisfy $v(\alpha)=-i$.  \\

  Take any positive integer $d\equiv i \pmod{n}$ that is divisble by $[\FF_L:\FF_\infty]$.  Let $x$ be a  closed point of $X$ of degree $d$.   We have $v(\rho_\infty(\Frob_x))= -\deg(\Frob_x)/d_\infty =- [\FF_x:\FF_\infty]=-d$.  Therefore,
\[
v(\rho_\infty(\Frob_x)\pi^{\lfloor d/n \rfloor}) = -d + {\lfloor d/n \rfloor} \ord_{\tau^{-1}}(\phi_\pi)/d_\infty  = - d + {\lfloor d/n \rfloor}n=-i.
\]
So $\rho_\infty(\Frob_x)\pi^{\lfloor d/n \rfloor}$ belongs to $\calV_i$; it belongs to $\calC$ if and only if $a_x(\phi) \pi^{\lfloor d/n \rfloor}$ is in $U$.

Let
\[
\bbar{\rho} \colon \Gal_L \xrightarrow{} D^\times/\ang{\pi}  
\]
be the continuous homomorphism obtained by composing $\rho_\infty \colon W_L \to D^\times$ with the quotient map to $D^\times/\ang{\pi}$, and then using the compactness of $D^\times/\ang{\pi}$ to extend by continuity.     

We can identify $\calV_i$, and hence also identify $\calV_i\cap \calC$, with its image in $D^\times/\ang{\pi}$.   This shows that
\[
\{ x \in |X|_d : a_x(\phi)\pi^{\lfloor d/n \rfloor} \in U \} =\{ x \in |X|_d : \bbar\rho(\Frob_x) \subseteq \calV_i\cap \calC \},
\]
which we can now estimate with the Chebotarev density theorem.  By assumption, we have $\rho_\infty(W_L)=D^\times$ and hence $\rho_\infty(W_{L\kbar})=\OO_D^\times$ by Lemma~\ref{L:u facts}(\ref{I:u fact iv}).   Therefore, $\bbar{\rho}(\Gal_L)=D^\times/\ang{\pi}$ and the cosets of $\bbar{\rho}(\Gal_{L\kbar})$ in $D^\times/\ang{\pi}$ are the sets $\calV_0,\calV_1,\ldots, \calV_{n-1}$.   By the global function field version of the Chebotarev density theorem, we have
\[
\lim_{\substack{d\equiv i \,(\!\bmod{n}),\, d\equiv 0 \,(\!\bmod{[\FF_L:\FF_\infty]}) \\ d \to + \infty}}\frac{|\{ x \in |X|_d : \bbar\rho(\Frob_x) \subseteq \calV_i\cap \calC \}|}{\#|X|_d} = \frac{\mu_{D^\times}(\calV_i\cap \calC)}{\mu_{D^\times}(\calV_i)}.
\]
It remains to compute the value ${\mu_{D^\times}(\calV_i\cap \calC)}/{\mu_{D^\times}(\calV_i)}$.

We first need to recall some facts about the division algebra $D$, cf.~\cite{MR0262250}*{\S2} for some background and references.  The algebra $D$ contains an unramified extension $W$ of $F_\infty$ of degree $n$ and an element $\beta$ such that
\[
D= W\oplus W\beta \oplus\cdots \oplus W \beta^{n-1}
\]
where $\beta^n$ is a uniformizer of $F_\infty$ and the map $a\mapsto \beta a \beta^{-1}$ generates $\Gal(W/F_\infty)$.     Define the map
\[
f\colon W^n \xrightarrow{\sim} D,\quad (a_0,\ldots, a_{n-1}) \mapsto {\sum}_{i=0}^{n-1} a_i \beta^i;
\]
it is an isomorphism of (left) vector spaces over $W$.  Let $\OO_W$ be the ring of integers of $W$ and denote its maximal ideal by $\mathfrak{p}$.   For any integers $m\in \ZZ$ and $0\leq j < n$, we have
\[
\mathfrak{P}^{mn+j}=f( (\mathfrak{p}^{m+1})^{j} \times (\mathfrak{p}^m)^{n-j} ).
\]
For $\alpha\in D$, the \emph{reduced trace} $\tr(\alpha)$ is the trace of the endomorphism of the $W$-vector space $D$ given by $v\mapsto v \alpha$.    One can check that $\tr(f(a_0,\ldots, a_{n-1}))=\Tr_{W/F_\infty}(a_0)$ for $(a_0,\ldots, a_{n-1})\in W^n$.    

First consider the case $i\geq 1$.  We have 
\[
\calV_i=\mathfrak{P}^{-i} - \mathfrak{P}^{-(i-1)}  =  f(\OO_W^{n-i} \times (\mathfrak{p}^{-1}-\OO_W)\times (\mathfrak{p}^{-1})^{i-1})
\]
and the measures arising from the restriction of the Haar measures of $D^\times$ and $W^n$, respectively, agree (up to a constant factor).   So
\begin{equation*} \label{E:trWF}
\frac{\mu_{D^\times}(\calV_i\cap \calC)}{\mu_{D^\times}(\calV_i)}= \mu_{\OO_W}(\{a_0\in \OO_W: \Tr_{W/F_\infty}(a_0)\in U\})
\end{equation*}
where $\mu_{\OO_W}$ is the Haar measure normalized so that $\mu_{\OO_W}(\OO_W)=1$.    Since $\Tr_{W/F_\infty}\colon \OO_W \to \OO_\infty$ is a surjective homomorphism of $\OO_\infty$-modules, we have $\mu_{\OO_W}(\{a_0\in \OO_W: \Tr_{W/F_\infty}(a_0)\in U\})=\mu(U)$.

Now consider the case $i=0$.  We have 
\[
\calV_0=\OO_D - \mathfrak{P}  =  f((\OO_W-\mathfrak{p})\times \OO_W^{n-1}).
\]
and the measures arising from the restriction of the Haar measures of $D^\times$ and $W^n$, respectively, agree (up to a constant factor).   So
\begin{equation*} \label{E:trWF}
\frac{\mu_{D^\times}(\calV_0\cap \calC)}{\mu_{D^\times}(\calV_0)}= \frac{\mu_{\OO_W}(\{a_0\in \OO_W-\mathfrak{p}: \Tr_{W/F_\infty}(a_0)\in U\})}{\mu_{\OO_W}(\OO_W-\mathfrak{p})}.
\end{equation*}
Note that $\Tr_{W/F_\infty}\colon \OO_W \to \OO_\infty$ is a surjective homomorphism of $\OO_\infty$-modules satisfying $\Tr_{W/F_\infty}(\mathfrak{p})=\pi\OO_\infty$.    Fix a coset $\kappa$ of $\pi\OO_\infty$ in $\OO_\infty$.   Then $\Tr_{W/F_\infty}^{-1}(\kappa) \cap (\OO_W-\mathfrak{p})$ is the union of $q^{d_\infty(n-1)}$ cosets of $\mathfrak{p}$ in $\OO_W$ when $\kappa\neq \pi \OO_\infty$, and $q^{d_\infty(n-1)}-1$ cosets when $\kappa=\pi\OO_\infty$.    One can then check that ${\mu_{\OO_W}(\{a_0\in \OO_W-\mathfrak{p}: \Tr_{W/F_\infty}(a_0)\in U\})}/{\mu_{\OO_W}(\OO_W-\mathfrak{p})}=\nu(U)$ by taking into account this weighting of cosets.

The following lemma will be used in the next section.
\begin{lemma} \label{L:div alg extra}
For $j\geq 1$, we have $\mu_{D^\times}(\{\alpha\in \OO_D-\pi \OO_D : \tr(\alpha)\equiv 0 \pmod{\pi^j\OO_\infty} \})\ll 1/q^{d_\infty j}$.
\end{lemma}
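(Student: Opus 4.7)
The plan is to use the $F_\infty$-linear identification $f\colon W^n \xrightarrow{\sim} D$ introduced just above the lemma. Under $f$ one has $\OO_D = f(\OO_W^n)$ and $\pi\OO_D = \mathfrak{P}^n = f(\mathfrak{p}^n)$, so the set
\[
S_j:=\{\alpha\in \OO_D-\pi\OO_D : \tr(\alpha)\in \pi^j\OO_\infty\}
\]
corresponds under $f^{-1}$ to $\{(a_0,\ldots,a_{n-1})\in \OO_W^n - \mathfrak{p}^n : \Tr_{W/F_\infty}(a_0)\in \pi^j\OO_\infty\}$, using the identity $\tr(f(a_0,\ldots,a_{n-1}))=\Tr_{W/F_\infty}(a_0)$ recalled in the preceding computation.

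First I would replace the multiplicative Haar measure $\mu_{D^\times}$ by the additive Haar measure $\mu_D$ on $D$. On $D^\times$ the two are related by a power of $|\Nrd(\alpha)|$, and on the compact annular region $\OO_D-\pi\OO_D$ one has $v(\alpha)\in\{0,1,\ldots,n-1\}$, so this factor is bounded above and below by constants depending only on $D$. Therefore it suffices to bound $\mu_D(S_j)$.

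Since the trace condition involves only the first coordinate under $f$, I would enlarge $S_j$ (dropping the $-\pi\OO_D$ constraint) to
\[
T_j := \{\alpha\in \OO_D : \tr(\alpha)\in \pi^j \OO_\infty\} = f\bigl(\Tr_{W/F_\infty}^{-1}(\pi^j \OO_\infty) \times \OO_W^{n-1}\bigr).
\]
Because $W/F_\infty$ is unramified, its different is trivial and $\Tr_{W/F_\infty}\colon \OO_W \surjects \OO_\infty$ is a surjective $\OO_\infty$-module homomorphism; hence $\Tr_{W/F_\infty}^{-1}(\pi^j\OO_\infty)$ is an $\OO_\infty$-submodule of index $q^{d_\infty j}$ in $\OO_W$. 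Normalizing $\mu_D$ so that $\mu_D(\OO_D)=1$, the product decomposition yields $\mu_D(T_j)=q^{-d_\infty j}$, and therefore $\mu_D(S_j)\leq \mu_D(T_j) = q^{-d_\infty j}$, giving the desired bound.

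The only delicate point is the measure comparison in the first step, but the boundedness of the annulus makes it immediate; the substantive content is the surjectivity of the unramified trace map, which reduces the estimate to counting cosets in $\OO_W$.
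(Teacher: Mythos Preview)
Your argument is correct and follows essentially the same route as the paper: both use the identification $f\colon W^n\to D$ together with $\tr(f(a_0,\ldots,a_{n-1}))=\Tr_{W/F_\infty}(a_0)$ to reduce to the statement that $\Tr_{W/F_\infty}^{-1}(\pi^j\OO_\infty)$ has index $q^{d_\infty j}$ in $\OO_W$. You are a bit more explicit than the paper about the comparison between $\mu_{D^\times}$ and the additive Haar measure on the annulus (the paper just writes ``one can then show that''), but otherwise the two proofs are the same.
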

\begin{proof}
We have $f(\OO_W^n - \mathfrak{p}^n)= \OO_D-\pi \OO_D$.   One can then show that
\begin{align*}
&\mu_{D^\times}(\{\alpha\in \OO_D-\pi \OO_D : \tr(\alpha)\equiv 0 \pmod{\pi^j\OO_\infty} \})\\
 \ll& \mu'( \{(a_0,\ldots,a_{n-1})\in \OO_W^n-\mathfrak{p}^n: \Tr_{W/F_\infty}(a_0)\equiv 0 \pmod{\pi^j \OO_\infty}\})\\
 \ll& \mu_{\OO_W}(\{a_0\in \OO_W : \Tr_{W/F_\infty}(a_0)\equiv 0 \pmod{\pi^j \OO_\infty} \})
\end{align*}
where $\mu'$ is a fixed Haar measure of $W^n$.  This last quantity is bounded by $|\OO_\infty/\pi^j\OO_\infty|^{-1}=q^{-d_\infty j}$.
\end{proof}

\section{Proof of Theorem~\ref{T:LT bound}} \label{S:LT proof}

To ease notation, set $D=D_\phi$ and define the (surjective) valuation $v \colon D \to \ZZ\cup \{+\infty\},\, \alpha\mapsto \ord_{\tau^{-1}}(\alpha)/d_\infty$.    Let $\OO_D$ be the valuation ring of $D$ with respect to $v$.  Fix a uniformizer $\pi$ of $F_\infty$ that we can view as element of $D$ by identifying it with $\phi_\pi$.  

 For each $\alpha\in D^\times$, we define $e(\alpha)$ to be the smallest integer such that $\alpha\pi^{e(\alpha)}$ belongs to $\OO_D$ (equivalently, $v(\alpha\pi^{e(\alpha)})\geq 0$).  Define the map 
\[
f\colon D^\times \to \OO_\infty,\quad \alpha\mapsto \tr(\alpha\pi^{e(\alpha)})
\]  
where $\tr$ is the reduced trace.
For each integer $j\geq 1$, let $f_j \colon D^\times \to \OO_\infty/\pi^j\OO_\infty$ be the function obtained by composing $f$ with the reduction modulo $\pi^j$ homomorphism.

\begin{lemma} \label{L:LT bound 1}
Let $x$ be a closed point of $X$ of degree $d$.   Then $f_j(\rho_\infty(\Frob_x))=0$ for all integers $1\leq j \leq \ord_\infty(a_x(\phi))+\lceil d/n \rceil$.  In particular,
\[
P_{\phi,a}(d) \leq |\{x\in |X|_d: f_j(\rho_\infty(\Frob_x))=0 \}|.
\]
\end{lemma}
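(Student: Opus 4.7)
The statement should follow by a direct unwinding of the definitions, using only Lemma~\ref{L:u facts}(iv), Lemma~\ref{L:infty independence}, the formula (\ref{E:degrees}) relating $\ord_{\tau^{-1}}$ and $\ord_\infty$, and the fact that $\pi$, being in $F_\infty$, lies in the center of $D$. The plan is to compute $e(\rho_\infty(\Frob_x))$ explicitly in terms of $d$, then evaluate $f(\rho_\infty(\Frob_x))$ and read off $\ord_\infty$ of the result; comparing to $j$ gives the conclusion.

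First I would observe that the valuation $v$ satisfies $v(\pi)=n$: indeed, identifying $\pi$ with $\phi_\pi$, equation (\ref{E:degrees}) gives $\ord_{\tau^{-1}}(\phi_\pi)=nd_\infty\ord_\infty(\pi)=nd_\infty$, so $v(\pi)=\ord_{\tau^{-1}}(\phi_\pi)/d_\infty=n$. Now take $x\in|X|_d$ and set $\alpha=\rho_\infty(\Frob_x)$. Since $\Frob_x$ acts on $\kbar$ as the $[\FF_x:k]$-th power of $y\mapsto y^q$, we have $\deg(\Frob_x)=[\FF_x:k]=d\cdot d_\infty$, so Lemma~\ref{L:u facts}(iv) gives $\ord_{\tau^{-1}}(\alpha)=-dd_\infty$ and hence $v(\alpha)=-d$. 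By definition, $e(\alpha)$ is the smallest integer satisfying $v(\alpha)+n\cdot e(\alpha)\geq 0$, so $e(\alpha)=\lceil d/n\rceil$.

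Since $\pi\in F_\infty$ lies in the center of $D$, the reduced trace is $F_\infty$-linear and we obtain
\[
f(\alpha)=\tr\bigl(\alpha\pi^{\lceil d/n\rceil}\bigr)=\pi^{\lceil d/n\rceil}\tr(\alpha)=\pi^{\lceil d/n\rceil}a_x(\phi),
\]
where the last equality uses Lemma~\ref{L:infty independence} together with $a_x(\phi)=\tr(\rho_\infty(\Frob_x))$. Therefore
\[
\ord_\infty\bigl(f(\alpha)\bigr)=\ord_\infty(a_x(\phi))+\lceil d/n\rceil.
\]
Since $f_j(\alpha)=0$ is equivalent to $\ord_\infty(f(\alpha))\geq j$, the first assertion follows for every $1\leq j\leq \ord_\infty(a_x(\phi))+\lceil d/n\rceil$.

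For the second assertion, if $a_x(\phi)=a$ then $\ord_\infty(a_x(\phi))=\ord_\infty(a)$, so any point $x\in|X|_d$ counted by $P_{\phi,a}(d)$ satisfies $f_j(\rho_\infty(\Frob_x))=0$ for the $j$ in question (which in Theorem~\ref{T:LT bound} will be taken of size $\approx d/n^2$, well within the allowed range). The inclusion of sets then gives the cardinality bound. There is no real obstacle here; the lemma is a bookkeeping step that repackages Frobenius traces into a fibre of $f_j$ so that the Chebotarev machinery can be applied in the proof of Theorem~\ref{T:LT bound}.
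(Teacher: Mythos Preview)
Your proposal is correct and follows essentially the same route as the paper's own proof: compute $v(\pi)=n$ and $v(\rho_\infty(\Frob_x))=-d$ to obtain $e(\alpha)=\lceil d/n\rceil$, then use centrality of $\pi$ to pull it through the reduced trace and identify $f(\alpha)=a_x(\phi)\pi^{\lceil d/n\rceil}$. The only difference is that you supply a bit more justification (explicitly invoking Lemma~\ref{L:u facts}(iv) and spelling out $\deg(\Frob_x)=dd_\infty$), which is fine.
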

\begin{proof}
Set $\alpha := \rho_\infty(\Frob_x)$.  We have $v(\alpha)=-\deg(x)/d_\infty=-d$.  Since $v(\pi)=\ord_{\tau^{-1}}(\phi_\pi)/d_\infty= n$, we have $e(\alpha)=\lceil d/ n \rceil$.  Hence $f(\alpha) = \tr(\alpha\pi^{e(\alpha)}) = \tr(\alpha) \pi^{e(\alpha)} = a_x(\phi) \pi^{e(\alpha)}$, which is divisible by $\pi^j$ for any integer $1\leq j \leq \ord_\infty(a_x(\phi))+e(\alpha)$.
\end{proof}

For each integer $j\geq 1$, define the group
\[
G_j:=D^\times/(F_\infty^\times (1+\pi^j\OO_D)).
\]
If $\alpha,\beta \in D^\times$ are in the same coset of $G_j$, then $f_j(\alpha)=0$ if and only if $f_j(\beta)=0$ [observe that $f(\alpha\pi^i)=f(\alpha)$ for $i\in \ZZ$,  $f(u\alpha)= u f(\alpha)$ for $u \in \OO_\infty^\times$, and $f_j(\alpha (1+\pi^{j}\gamma))=f_j(\alpha)$ for $\gamma\in \OO_D$].  So by abuse of notation, it makes sense to ask whether $f_j(\alpha)=0$ for a coset $\alpha\in G_j$.  The subset $C_{j}:=\{ \alpha \in G_j : f_j(\alpha)=0\}$ of $G_j$ is stable under conjugation.    The group $G_j$ and the set $C_j$ do not depend on the initial choice of uniformizer $\pi$.  

Let $\bbar{\rho}_j \colon \Gal_L \to G_j$ be the Galois representation obtained by composing $\rho_\infty$ with the quotient map to $G_j$ and then extending to a representation of $\Gal_L$ by using that $\rho_\infty$ is continuous and $G_j$ is finite.   Lemma~\ref{L:LT bound 1} gives the bound  
\begin{equation} \label{E:LT start}
P_{\phi,a}(d) \leq |\{x\in |X|_d: \bbar\rho_j(\Frob_x)\subseteq C_j \}|
\end{equation}
whenever $1\leq j \leq \ord_\infty(a)+\lceil d/n \rceil$.   

We shall bound $P_{\phi,a}(d)$ by bounding the right-hand side of (\ref{E:LT start}) with an effective version of the Chebotarev density theorem and then choosing $j$ to optimize the resulting bound.   Let $\widetilde{G}_j$ be the image of $\bbar\rho_j\colon \Gal_L \to G_j$ and let $\widetilde{C}_j$ be the intersection of $\widetilde{G}_j$ with $C_j$.   The effective Chebotarev density theorem of Murty and Scherk \cite{MR1298275}*{Th\'eor\`eme~2} implies that 
\begin{align*}
|\{x\in |X|_d: \bbar\rho_j(\Frob_x)\subseteq \widetilde C_j \}| \ll m_j \frac{|\widetilde C_j|}{|\widetilde G_j|}\cdot  \#|X|_d 
+ |\widetilde C_j|^{1/2} (1+(\varrho_j+1)|\mathscr{D}|)\frac{q^{d_\infty d/2}}{d} 
\end{align*}
where the implicit constant depends only on $L$, and the quantities $m_j$, $|\mathscr{D}|$ and $\varrho_j$ will be described below.  (Their theorem is only given for a conjugacy class, not a subset stable under conjugation, but one can easily extend to this case by using the techniques of \cite{MR935007}.)

We first bound the cardinality of our subset $C_j$.

\begin{lemma} \label{L:CDT ingredients}
We have $|C_j| \ll q^{d_\infty (n^2-2)j}$ and $|C_j|/|G_j| \ll  1/q^{d_\infty j}$.
\end{lemma}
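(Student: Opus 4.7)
The plan is to bound both quantities by reducing to Lemma~\ref{L:div alg extra} via a clean identification of $G_j$ with a quotient of the compact open set $\OO_D - \pi\OO_D$.

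The key observation is that $\OO_D - \pi\OO_D$ is exactly the set of $\alpha \in D^\times$ with $v(\alpha) \in \{0,1,\ldots,n-1\}$, and for such $\alpha$ we have $e(\alpha) = 0$, hence $f(\alpha) = \tr(\alpha)$. I will first show that the natural map
\[
(\OO_D - \pi\OO_D)/\OO_\infty^\times(1+\pi^j\OO_D) \;\longrightarrow\; G_j
\]
is a bijection: every coset of $F_\infty^\times(1+\pi^j\OO_D)$ in $D^\times$ has a representative in $\OO_D - \pi\OO_D$ (multiply by a suitable power of $\pi$, using $v(\pi) = n$), and if two such representatives $\alpha,\alpha'$ differ by $c(1+\pi^j\gamma)$ with $c \in F_\infty^\times$, then $v(c) \in n\ZZ$ together with $v(\alpha),v(\alpha') \in \{0,\ldots,n-1\}$ forces $v(c) = 0$, so $c \in \OO_\infty^\times$. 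Under this bijection, $C_j$ corresponds to the set $\tilde C_j := \{\alpha \in \OO_D - \pi\OO_D : \tr(\alpha) \equiv 0 \pmod{\pi^j}\}$ bounded in Lemma~\ref{L:div alg extra}.

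Fix a Haar measure $\mu_{D^\times}$ on $D^\times$. Since $\tilde C_j$ and $\OO_D - \pi\OO_D$ are both unions of cosets of the compact open subgroup $\OO_\infty^\times(1+\pi^j\OO_D)$, the bijection above gives
\[
\frac{|C_j|}{|G_j|} = \frac{\mu_{D^\times}(\tilde C_j)}{\mu_{D^\times}(\OO_D - \pi\OO_D)}.
\]
The decomposition $\OO_D - \pi\OO_D = \bigsqcup_{i=0}^{n-1} \beta^i \OO_D^\times$ (where $\beta$ is a uniformizer of $D$) together with left translation invariance shows that $\mu_{D^\times}(\OO_D - \pi\OO_D) = n\,\mu_{D^\times}(\OO_D^\times)$, a constant independent of $j$. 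Lemma~\ref{L:div alg extra} then yields $|C_j|/|G_j| \ll 1/q^{d_\infty j}$.

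For the absolute bound on $|C_j|$, it suffices to show $|G_j| \asymp q^{d_\infty(n^2-1)j}$ and multiply. Using $|\OO_D/\mathfrak{P}| = q^{nd_\infty}$ and $\pi\OO_D = \mathfrak{P}^{n}$, one has $|\OO_D^\times/(1+\pi^j\OO_D)| \asymp q^{n^2 d_\infty j}$ and $|\OO_\infty^\times/(1+\pi^j\OO_\infty)| \asymp q^{d_\infty j}$. Combined with the identity $\OO_\infty^\times \cap (1+\pi^j\OO_D) = 1 + \pi^j\OO_\infty$ (since $\OO_\infty \cap \pi^j\OO_D = \pi^j\OO_\infty$, via the relation $v|_{F_\infty^\times} = n\,\ord_\infty$), this gives $|G_j| = n \cdot [\OO_D^\times : \OO_\infty^\times(1+\pi^j\OO_D)] \asymp q^{d_\infty(n^2-1)j}$. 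Multiplying by the ratio bound yields $|C_j| \ll q^{d_\infty(n^2-2)j}$. There is no substantial obstacle here beyond verifying the bijection carefully and keeping track of the finite-index computations, since all the analytic content lives in Lemma~\ref{L:div alg extra}.
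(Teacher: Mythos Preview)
Your proof is correct and follows essentially the same approach as the paper: both reduce the ratio $|C_j|/|G_j|$ to the Haar-measure bound of Lemma~\ref{L:div alg extra} via the identification of $G_j$-cosets with elements of $\OO_D-\pi\OO_D$, and both compute $|G_j|\asymp q^{d_\infty(n^2-1)j}$ from the index $[\OO_D^\times:\OO_\infty^\times(1+\pi^j\OO_D)]$ (the paper phrases this as a short exact sequence $1\to \OO_D^\times/(\OO_\infty^\times(1+\pi^j\OO_D))\to G_j\to \ZZ/n\ZZ\to 0$, which is exactly your decomposition $\OO_D-\pi\OO_D=\bigsqcup_i\beta^i\OO_D^\times$). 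Your write-up is in fact slightly more explicit about why the bijection holds and why the ratio of cardinalities equals the ratio of Haar measures.
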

\begin{proof}
We first prove the bound of $|C_j|/|G_j|$.    For $\alpha\in D^\times$, we have $\alpha \pi^{e(\alpha)} \in \OO_D-\pi \OO_D$ and hence
\[
\frac{|C_j|}{|G_j|} \ll \mu_{D^\times}(\{\alpha \in \OO_D-\pi \OO_D : \tr(\alpha)\equiv 0 \pmod{\pi^j\OO_\infty}\})
\]
where $\mu_{D^\times}$ is a fixed Haar measure of $D^\times$.   From Lemma~\ref{L:div alg extra}, we deduce that ${|C_j|}/{|G_j|}\ll 1/q^{d_\infty j}$.

We have a short exact sequence of groups:
\[
1\to \OO_D^\times/ (\OO_\infty^\times(1+\pi^j\OO_D)) \to G_j \xrightarrow{v} \ZZ/n\ZZ \to 0.
\]
The group $\OO_D^\times/ (\OO_\infty^\times(1+\pi^j\OO_D))$ is isomorphic to $(\OO_D/\pi^j \OO_D)^\times/(\OO_\infty/\pi^j \OO_\infty)^\times$, and hence has cardinality 
\[
\frac{(q^{d_\infty n^2}-1) q^{d_\infty n^2 \cdot (j-1)} }{ (q^{d_\infty}-1)q^{d_\infty (j-1)}} = q^{d_\infty (n^2-1)j}\cdot \frac{1-1/q^{d_\infty n^2}}{1-1/q^{d_\infty}}.
\]
This proves that there are positive constants $c_1$ and $c_2$, not depending on $j$, such that $c_1q^{d_\infty(n^2-1)j} \leq |G_j| \leq c_2q^{d_\infty(n^2-1)j}$.  The required upper bound for $|C_j|$ follows from our bounds of $|C_j|/|G_j|$ and $|G_j|$.
\end{proof}

By Theorem~\ref{T:main theorem non-CM}, the index $[G_j:\widetilde{G}_j]$ can be bounded independent of $j$.    Lemma~\ref{L:CDT ingredients} and the inclusion $\widetilde{C}_j\subseteq C_j$ shows that ${|\widetilde C_j|}/{|\widetilde G_j|} \ll 1/q^{d_\infty j}$ and $|\widetilde{C}_j|^{1/2} \ll q^{d_\infty (n^2-2)j/2}$.

We define $L_j$ to be the fixed field in $L^\sep$ of the kernel of $\bbar\rho_j$.    Let $\mathcal C$ and $\mathcal C_j$ be smooth projective curves with function fields $L$ and $L_j$, respectively.  We can take $m_j:=[\FF_{L_j}:\FF_L]$ above, where $\FF_{L_j}$ and $\FF_{L}$ are the field of constants of $L_j$ and $L$, respectively.    Theorem~\ref{T:main theorem non-CM} implies that $\rho_\infty(\Gal(L^\sep/L\kbar))$ is an open subgroup of $\OO_D^\times$, and hence $m_j\leq [G_j : \bbar\rho_j(\Gal(L^\sep/L\kbar))]$ can be bounded independently of $j$.

We define $|\mathscr{D}|:= \sum_{x} \deg(x)$ where the sum is over the closed points of $\mathcal{C}$ for which the morphism $\mathcal{C}_j \to \mathcal{C}$,  corresponding to the field extension $L_j/L$, is ramified.    We may view $X$ as an open subvariety of $\mathcal{C}$.   Since the representation $\rho_\infty$ is unramified at all closed points of $X$ and $\mathcal{C}\setminus X$ is finite, we find that $|\mathscr{D}|$ can also be bounded independent of $j$.

Let $\mathcal{D}_{L_j/L}$ be the different of the extension $L_j/L$; it is an effective divisor of $\mathcal{C}_j$ of the form $\sum_{x} \sum_{y} d(y/x)\cdot y$, where the first sum is over the closed points $x$ of $\mathcal{C}$ and the second sum is over the closed points $y$ of $\mathcal{C}_j$ lying over $x$.   We define $\varrho_j$ to be the smallest non-negative integer for which the inequality $d(y/x) \leq e(y/x) (\varrho_j +1)$ always holds, where $e(y/x)$ is the usual ramification index.  We will prove the following bound for $\varrho_j$ in \S\ref{SS:proof of wild part}.
\begin{lemma} \label{L:wild part}  
With notation as above, we have $\varrho_j \ll j+1$ where the implicit constant does not depend on $j$.
\end{lemma}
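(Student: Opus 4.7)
The plan is to localize at the finitely many places where $\bbar{\rho}_j$ is ramified and then to bound the ramification at each one linearly in $j$ using the explicit Artin--Schreier tower of fields coming from the series $u$ in Section~\ref{S:construction}.

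By Lemma~\ref{L:infty independence}, $\bbar{\rho}_j$ is unramified at every closed point of $X$, so the divisor $\mathcal{D}_{L_j/L}$ is supported above the finite set $S := |\mathcal{C}| \setminus |X|$, independently of $j$. It suffices to show that for each $v \in S$ there exists a constant $A_v$, independent of $j$, with $d(y/x) \leq A_v \cdot e(y/x) \cdot (j+1)$ for every prime $y$ of $\mathcal{C}_j$ over $x$. Fix such a $v$, and let $\{I_v^{(u)}\}_{u \geq 0}$ be the upper-numbering ramification filtration of inertia $I_v \subseteq \Gal(L_v^\sep/L_v)$. The classical change of variables between upper and lower numbering (cf.\ Serre, \emph{Local Fields}, Ch.~IV) gives
\[
\frac{d(y/x)}{e(y/x)} \;=\; \int_0^{\infty}\Bigl(1 - \frac{1}{|\bbar{\rho}_j(I_v^{(u)})|}\Bigr)\,du \;\leq\; u_j^*(v),
\]
where $u_j^*(v) := \sup\{u \geq 0 : \bbar{\rho}_j(I_v^{(u)}) \neq 1\}$. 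Thus it suffices to prove $u_j^*(v) \ll j+1$.

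Write $u = \sum_{i \geq 0} c_i \tau^{-i}$ with $c_0 \neq 0$, as in Lemma~\ref{L:u exists}. Since $v$ is a finite place, the inertia $I_v$ lies in $\Gal(L^\sep/L\kbar)$ (constant-field extensions are unramified at finite places), and for $\sigma \in I_v$ we have $\deg(\sigma) = 0$ and $\rho_\infty(\sigma) = \sigma(u)u^{-1}$. A direct calculation using $\ord_{\tau^{-1}}(u) = 0 = \ord_{\tau^{-1}}(u^{-1})$ and $\ord_{\tau^{-1}}(\phi_\pi^j) = n d_\infty j$ shows that if $\sigma$ fixes $c_0, \ldots, c_{n d_\infty j - 1}$, then $\rho_\infty(\sigma) \in 1+\pi^j \OO_D \subseteq F_\infty^\times(1+\pi^j\OO_D)$, and hence $\sigma \in \ker(\bbar{\rho}_j)$. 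Consequently the image of $I_v$ under $\bbar{\rho}_j$ factors through $\Gal(K_j/L\kbar)$, where $K_j := L\kbar(c_0, \ldots, c_{n d_\infty j - 1})$, and it suffices to bound the largest upper-numbering break at $v$ of the tower $L\kbar \subseteq L\kbar(c_0) \subseteq L\kbar(c_0, c_1) \subseteq \cdots \subseteq K_j$.

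The main obstacle is this last step. Each $c_i$ is determined by $\delta$ and the coefficients $a_0, a_1, \ldots, a_i$, where the $a_i$ satisfy the separable Artin--Schreier-type equation~(\ref{E:artin-schreier}) of degree $q^h$ over the previous stage. An inductive estimate using~(\ref{E:artin-schreier}) gives $\ord_w(a_i) \geq -C\,i$ at any place $w$ of $L\kbar$ above $v$, where $C$ depends only on $v$ and the coefficients $b_0, \ldots, b_h$ of $\phi_y$. A standard Artin--Schreier conductor calculation (Serre, \emph{Local Fields}, Ch.~IV, \S2) then shows that the $i$-th step has lower-numbering break of order $O(i)$; however, after conversion to upper numbering via the Herbrand composition $\psi_{K_j/L\kbar} = \psi_{K_i/L\kbar} \circ \psi_{K_j/K_i}$, these contributions renormalize so that each step adds at most $O(1)$ to the upper-numbering break of the full composite. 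Summing over the $n d_\infty j$ steps gives $u_j^*(v) \leq A_v(j+1)$, and taking $A := \max_{v \in S} A_v$ yields $\varrho_j \leq A(j+1)$, proving the lemma. The subtle point is precisely this reconciliation between linearly growing lower-numbering conductors and bounded per-step upper-numbering increments, which requires careful bookkeeping with the Herbrand $\psi$ functions.
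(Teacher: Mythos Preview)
Your overall architecture matches the paper's: reduce to the finitely many bad places, embed $L_j$ in the explicit Artin--Schreier tower generated by the coefficients of $u$, and bound the ramification of that tower step by step. The integral formula you quote is equivalent to the paper's use of the transitivity relation $d(w_3/w_1)/e(w_3/w_1) = d(w_2/w_1)/e(w_2/w_1) + d(w_3/w_2)/e(w_3/w_1)$, since $d/e$ is exactly that integral up to $O(1)$.

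The genuine gap is in your valuation bound and the ``renormalization'' claim that follows. The recursion~(\ref{E:artin-schreier}) in fact yields a \emph{uniform} bound $\ord_v(a_i)\geq -B$ independent of $i$ (this is the paper's Lemma~\ref{L:ord lower}): when $\ord_v(a_i)<0$ one has $\ord_v(a_i)=\ord_v(a_i^{q^h}-a_i)/q^h$, while the right-hand side of~(\ref{E:artin-schreier}) involves only powers $a_{i'}^{q^j}$ with $j\leq h-1$; dividing by $q^h$ is therefore a contraction and the inductive bound stabilizes at $B=C/(1-1/q)$. Your linear bound $\ord_w(a_i)\geq -Ci$ is true but discards this. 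With only the linear bound, the Artin--Schreier parameter at step $i$ in the base-field valuation is $\tilde m_i=O(i)$, and that step contributes $(1-1/q)\tilde m_i+O(1)$ to $d/e$ via the tower formula; summing over $i\leq nd_\infty j$ gives $d/e=O(j^2)$. Since the top upper break $u_j^*$ and $d/e$ are comparable (one has $\tfrac12 u_j^*\leq d/e\leq u_j^*+1$ because $1-1/|G^u|\in[\tfrac12,1]$ for $u<u_j^*$), no Herbrand bookkeeping can convert this $O(j^2)$ into $O(j)$; your renormalization assertion is simply false as stated.

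The fix is to prove the uniform bound first. Then each Artin--Schreier step has bounded parameter in the base valuation, so each step adds $O(1)$ to $d/e$ directly (this is the paper's Lemma~\ref{L:exp bound}), and summing over the $O(j)$ steps gives $\varrho_j\ll j+1$ without any upper/lower numbering conversion.
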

Finally, we note that $\#|X|_d \ll q^{d_\infty d}/d$.  For any integer $1\leq j \leq \ord_\infty(a)+\lceil d/n \rceil$, combining all our bounds together we obtain
\begin{align*}
P_{\phi,a}(d)& \ll \,  \frac{1}{q^{d_\infty j}} \frac{q^{d_\infty d}}{d} + q^{d_\infty(n^2-2)j/2}\cdot j \cdot \frac{q^{d_\infty d/2}}{d} = \, \frac{q^{d_\infty(d-j)}}{d} + q^{d_\infty((n^2-2)j+ d)/2} \cdot \frac{j}{d}
\end{align*}
where the implicit constant depends only on $\phi$.   We choose $j:=\ord_\infty(a)+\lceil{d/ n^2}\rceil$; for $d$ sufficiently large, we do indeed have $1\leq j \leq \ord_\infty(a)+\lceil d/n \rceil$.  With this choice of $j$, we obtain the desired bound $P_{\phi,a}(d) \ll q^{d_\infty(1-1/n^2)d}$.  

\subsection{Proof of Lemma~\ref{L:wild part}}\label{SS:proof of wild part}
Fix a non-constant $y\in A$ and define $h=-nd_\infty\ord_\infty(y)\geq 1$.   Construct $\delta \in (L^\sep)^\times$ and $a_0=1, a_1,a_2,\ldots\in L^\sep$ as in the beginning of \S\ref{S:construction}.   The series $u=\delta(\sum_{i=0}^\infty a_i \tau^{-i})$ then satisfies $u^{-1}\phi(F_\infty) u \subseteq \kbar\twistedLaurent$.   We noted above that $[\FF_{L_j}:\FF_L]$ can be bounded independently of $j$.   So there is a finite subfield $\FF$ of $\kbar$ that contains all the fields $\FF_{L_j}$ and also the field with cardinality $q^h$.  Set $K_0=L\FF(\delta)$, and recursively define the subfields $K_{i+1}:=K_i(a_{i+1})$ of $L^\sep$ for $i\geq 0$.    For $\sigma\in \Gal(L^\sep/L\kbar)$, we have $\rho_\infty(\sigma) \in 1+\pi^j \OO_D$ if only if $v(\rho_\infty(\sigma)-1)=v(\sigma(u)u^{-1} -1) = v(\sigma(u)-u)$ is greater than or equal to $v(\phi_\pi^j)= j n$.  This implies that $L_j$ is a subfield of $K_{jn}$.\\

Consider a chain of global function fields $F_1\subseteq F_2 \subseteq F_3$ with valuations $v_1$, $v_2$, and $v_3$, respectively (so $v_3$ lies over $v_2$ and $v_2$ lies over $v_1$).    We then have $d(v_3/v_1)= e(v_3/v_2)d(v_2/v_1)+d(v_3/v_2)$, equivalently
\begin{equation} \label{E:diff ram}
\frac{d(v_3/v_1)}{e(v_3/v_1)} = \frac{d(v_2/v_1)}{e(v_2/v_1)}+\frac{d(v_3/v_2)}{e(v_3/v_1)},
\end{equation}
where $d(v_j/v_i)$ is the degree of the different $\mathcal{D}_{F_j/F_i}$ at $v_j$ and $e(v_j/v_i)$ is the usual ramification index.    
Fix an integer $j$, and take any place $v$ of $L$ and any place $w$ of $L_j$ lying over $v$.    Since $L \subseteq L_j \subseteq K_{jn}$, we can choose a place $w'$ of $K_{jn}$ lying over $w$.  Using (\ref{E:diff ram}), we have
\[
\frac{d(w/v)}{e(w/v)}  \leq \frac{d(w'/v)}{e(w'/v)}.
\]

It thus suffices to prove that $d(w/v)/e(w/v)\ll j+1$ holds for every place $v$ of $L$, $j\geq 0$, and place $w$ of $K_{j}$ lying over $v$.   Fix a place $v$ of $L$.

\begin{lemma} \label{L:ord lower}
There is a constant $B\geq 0$ such that $\ord_v(a_i)\geq -B$ holds for all $i\geq 0$ and all valuations $\ord_v\colon L^\sep \to \QQ\cup\{+\infty\}$ extending $\ord_v$.
\end{lemma}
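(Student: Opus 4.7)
The plan is to exploit the recursion defining the $a_i$ and prove the bound by induction on $i$, showing that a single constant $B$ works simultaneously for every extension $\ord_v$ of the valuation on $L$ to $L^\sep$.

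First I would observe that the key equation
\[
a_i^{q^h} - a_i = -\sum_{\substack{0\leq j\leq h-1\\ i+j-h\geq 0}} \delta^{q^j-1} b_j a_{i+j-h}^{q^j}
\]
expresses $a_i$ via an Artin--Schreier-type relation whose right-hand side involves only the previous terms $a_0,\dots,a_{i-1}$. Since $b_j\in L$ and since $\delta$ satisfies $\delta^{q^h-1}=1/b_h$, for any extension $\ord_v$ of the given valuation to $L^\sep$ one has $\ord_v(b_j)=\ord_v(b_j)$ and $\ord_v(\delta)=-\ord_v(b_h)/(q^h-1)$. In particular the constant
\[
c \;:=\; \min_{0\le j\le h-1}\ord_v\!\bigl(\delta^{q^j-1} b_j\bigr)
\]
depends only on $v$ (and on $\phi$ and the choice of $y$), not on the choice of extension of $\ord_v$ to $L^\sep$.

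Next I would run the induction. For $i=0$ we have $a_0=1$, so $\ord_v(a_0)=0$. Suppose $\ord_v(a_k)\ge -B$ for all $k<i$, where $B\ge 0$ is to be chosen. By the ultrametric inequality applied to the right-hand side,
\[
\ord_v\!\Bigl(-\sum_j \delta^{q^j-1}b_j a_{i+j-h}^{q^j}\Bigr) \;\ge\; c + \min_{0\le j\le h-1} q^j\,\ord_v(a_{i+j-h}) \;\ge\; c - q^{h-1} B.
\]
On the left-hand side, if $\ord_v(a_i)\ge 0$ then certainly $\ord_v(a_i)\ge -B$; otherwise $\ord_v(a_i)<0$, in which case $q^h\ord_v(a_i)<\ord_v(a_i)$ strictly and the ultrametric equality gives $\ord_v(a_i^{q^h}-a_i)=q^h\ord_v(a_i)$. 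Combining with the bound on the right-hand side yields
\[
q^h\,\ord_v(a_i) \;\ge\; c - q^{h-1} B, \qquad \text{i.e.} \qquad \ord_v(a_i) \;\ge\; \frac{c - q^{h-1} B}{q^h}.
\]

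Finally, I would choose $B$ so that the induction closes. A short computation shows that the right-hand side above is $\ge -B$ precisely when $B\ge -c/(q^{h-1}(q-1))$. Hence setting
\[
B \;=\; \max\!\Bigl(0,\ -c/(q^{h-1}(q-1))\Bigr)
\]
gives a constant depending only on $v$, $\phi$, and $y$ (in particular independent of $i$ and of the extension of $\ord_v$ to $L^\sep$) with $\ord_v(a_i)\ge -B$ for every $i\ge 0$. The only mildly subtle point is the independence-of-extension claim for $\ord_v(\delta)$ and $\ord_v(b_j)$; this is the essential reason the same $B$ works uniformly in $w\mid v$ and is where I would take most care in the write-up.
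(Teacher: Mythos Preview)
Your proof is correct and follows essentially the same approach as the paper: induction on $i$ using the Artin--Schreier recursion, splitting into the cases $\ord_v(a_i)\ge 0$ and $\ord_v(a_i)<0$, and in the latter case using that $\ord_v(a_i^{q^h}-a_i)=q^h\ord_v(a_i)$ to close the induction with a constant of the form $-c/(q^{h-1}(q-1))$. The paper organizes the same computation by first dividing through by $q^h$ and setting $B=C/(1-1/q)$, but the resulting constant is the same as yours.
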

\begin{proof}  
From (\ref{E:artin-schreier}), we find that 
\begin{equation} \label{E:artin-schreier 2}
\frac{1}{q^h}\ord_v(a_i^{q^h}-a_i) \geq -C + \min_{\substack{0\leq j\leq h-1\\ i+j-h\geq 0}} \frac{\ord_v(a_j)}{q^{h-j}}
\end{equation}
holds for some constant $C\geq 0$.   Define $B:= C/(1-1/q)$.

  We will proceed by induction on $i$.  The lemma is trivial for $i=0$, since $\ord_v(a_0)=0$.   Now take $i\geq 1$.   If $\ord_v(a_i)\geq 0$, then we definitely have $\ord_v(a_i)\geq -B$.  Suppose that $\ord_v(a_i)<0$.  Then the roots of (\ref{E:artin-schreier}) as a polynomial in $a_i$ are $a_i+b$ with $b$ in the subfield of $\kbar$ of cardinality $q^h$; we have $\ord_v(a_i+b)=\ord_v(a_i)$ for all such $b$ (since $\ord_v(a_i)<0$), so  $\ord_v(a_i) = \ord_v(a_i^{q^h}-a_i)/q^h$.  By (\ref{E:artin-schreier 2}) and our inductive hypothesis, we deduce that
\[
\ord_v(a_i) \geq -C - B/q = -B(1-1/q)-B/q=-B. \qedhere
\]
\end{proof}

\begin{lemma} \label{L:exp bound}
For a fixed integer $j\geq 0$, let $w_{j}$ and $w_{j+1}$ be places of $K_{j}$ and $K_{j+1}$, respectively, such that $w_j$ lies over $v$ and $w_{j+1}$ lies over $w_j$.  We then have
\[
d(w_{j+1}/w_j) \leq C e(w_{j+1}/v)
\]
where $C$ is a non-negative constant that does not depend on $j$.
\end{lemma}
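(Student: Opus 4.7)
The plan is to identify $a_{j+1}$ as a root of a generalized Artin-Schreier polynomial over $K_j$, bound the pole of the right-hand side at $w_j$ via Lemma~\ref{L:ord lower}, and then apply the standard ramification estimate for such extensions.

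From equation (\ref{E:artin-schreier}) with index $j+1$, one reads off $a_{j+1}^{q^h} - a_{j+1} = c$, where
\[
c := -\sum_{\substack{0 \leq l \leq h-1\\ j+1+l-h \geq 0}} \delta^{q^l-1} b_l\, a_{j+1+l-h}^{q^l}
\]
lies in $K_j$. Since $l$ ranges over the bounded set $\{0,\dots,h-1\}$, the values $\delta$ and the $b_l$ are fixed, and $\ord_v(a_i) \geq -B$ for all $i$ by Lemma~\ref{L:ord lower}, a term-by-term estimate produces a constant $B_1$, independent of $j$, with $\ord_v(c) \geq -B_1$. Multiplying by the ramification index gives $-\ord_{w_j}(c) \leq B_1 \cdot e(w_j/v)$.

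The key step will be the bound $d(w_{j+1}/w_j) \leq (q^h-1)(m+1)$, where $m := \max(0,-\ord_{w_j}(c))$. If $m=0$, Hensel's lemma applied to $X^{q^h}-X-c$ (which has unit derivative $-1$ and integral coefficients) shows that $K_{j+1}/K_j$ is unramified at $w_j$. If $m>0$, I would first reduce $c$ modulo the additive subgroup $\{\beta^{q^h}-\beta : \beta\in K_j\}$ to bring its pole order into standard form with $\gcd(m,p)=1$; this neither changes $K_{j+1}$ nor increases $m$. Since $\FF_{q^h}\subseteq K_0 \subseteq K_j$, the extension $K_{j+1}/K_j$ is then Galois with group $G$ a subgroup of $(\FF_{q^h},+)$ acting by $a_{j+1}\mapsto a_{j+1}+\alpha$, and the identity $q^h\cdot\ord_{w_{j+1}}(a_{j+1})=\ord_{w_{j+1}}(c)=-e(w_{j+1}/w_j)\cdot m$ together with $\gcd(m,p)=1$ forces $e(w_{j+1}/w_j)=q^h=|G|$, so the extension is totally ramified. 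Choosing integers $a,b$ with $aq^h-bm=1$ via B\'ezout makes $\pi_{w_j}^a a_{j+1}^b$ a uniformizer of $K_{j+1}$, and a direct expansion of $\sigma(\pi_{w_j}^a a_{j+1}^b)-\pi_{w_j}^a a_{j+1}^b$ for $\sigma\in G$ acting by addition yields the lower-numbering filtration $G_0=\cdots=G_m=G$ and $G_{m+1}=\{1\}$, hence $d(w_{j+1}/w_j)=(m+1)(q^h-1)$.

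Combining the two estimates and using $e(w_j/v)\leq e(w_{j+1}/v)$ gives
\[
d(w_{j+1}/w_j) \leq (q^h-1)(B_1\cdot e(w_{j+1}/v)+1) \leq (q^h-1)(B_1+1)\cdot e(w_{j+1}/v),
\]
so $C := (q^h-1)(B_1+1)$ works; this constant depends on $v$, $\phi$, and $\delta$, but not on $j$. The main obstacle will be the standard-form reduction when $p\mid m$: although the degree-$p$ classical Artin-Schreier version is well documented, one must verify that iteratively subtracting elements of the form $\beta^{q^h}-\beta$ with $\beta\in K_j$ suffices to reduce the pole order of $c$ at $w_j$ to one coprime to $p$ (or to a non-negative valuation), and then track the resulting Galois action carefully enough to read off the ramification filtration.
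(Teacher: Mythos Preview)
Your approach is essentially the paper's: recognize $K_{j+1}/K_j$ as a generalized Artin--Schreier extension, bound the pole of the defining element uniformly via Lemma~\ref{L:ord lower}, and invoke the standard different formula. The paper handles your flagged obstacle by two adjustments. First, instead of $X^{q^h}-X-c$, it works with the \emph{minimal} polynomial of $a_{j+1}$ over $K_j$: since $\FF_{q^h}\subseteq K_0$, the map $\sigma\mapsto\sigma(a_{j+1})-a_{j+1}$ is a homomorphism $\Gal_{K_j}\to\FF_{q^h}$ with image $H$, and the minimal polynomial is $g(X)-g(a_{j+1})$ for the additive polynomial $g(X)=\prod_{b\in H}(X+b)$ of degree $|H|$. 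Second, for the ramification bound it simply cites Stichtenoth's Proposition~3.7.10, which already packages the reduction to standard form and yields $d(w_{j+1}/w_j)\le(|H|-1)(m+1)\,e(w_j/v)$ with $m=-\ord_v(g(a_{j+1}))$; the needed lower bound on $\ord_v(g(a_{j+1}))$ then comes from Lemma~\ref{L:ord lower} applied to $a_{j+1}$ itself, since $g(a_{j+1})=\prod_{b\in H}(a_{j+1}+b)$ with each $b$ a constant. Working with $g$ rather than $X^{q^h}-X$ also absorbs the possibility $[K_{j+1}:K_j]<q^h$ automatically; in your formulation that case would have to emerge from the reduction terminating at non-negative valuation rather than at $\gcd(m,p)=1$, which is consistent but needs the extra bookkeeping you anticipate.
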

\begin{proof}
Choose an extension $\ord_v\colon L^\sep \to \ZZ\cup\{+\infty\}$ that corresponds to $w_{j+1}$ when restricted $K_{j+1}$.  We defined $a_{j+1}$ to be a root of the polynomial $X^{q^h}-X + \beta_j$ with $\beta_j \in K_j$.  Let $k_h$ be the subfield (of $K_0$) of cardinality $q^h$.  We have $X^{q^h}-X + \beta_j= \prod_{b \in k_h} (X- a_{j+1}+b)$, so for each $\sigma \in \Gal_{K_{j}}$, there is a unique $\gamma(\sigma) \in k_h$ such that $\sigma(a_{j+1})= a_{j+1} + \gamma(\sigma)$.  Since $k_h \subseteq K_{j}$,  we find that $\gamma \colon \Gal_{K_{j}}\to k_h$ is a homomorphism whose image we will denote by $H$.    Define the additive polynomial $g(X):= \prod_{b\in H} (X+b) \in k_h [X]$.   The minimal polynomial of $a_{j+1}$ over $K_{j}$ is thus
\[
g(X-a_{j+1}) = g(X) - g(a_{j+1}) \in K_{j}[X],
\]
and the extension $K_{j+1}/K_{j}$ is Galois with Galois group $H$.   

The extension $K_{j+1}/K_j$  is a variant of the familiar Artin-Schreier extensions.   If $\ord_v(g(a_{j+1}))\geq 0$, then $K_{j+1}/K_j$ is unramified at $w_j$ \cite{St}*{Prop.~3.7.10(c)}, so $d(w_{j+1}/w_j)=0$ and the lemma is trivial.   So we may suppose that $m:=-\ord_v(g(a_{j+1}))>0$ and that $K_{j+1}/K_j$ is ramified at $w_j$.  We then find that $K_{j+1}/K_j$ is totally ramified at $w_j$ and that $d(w_{j+1}/w_{j}) \leq (|H|-1)(m+1) e(w_{j}/v)$ (see \cite{St}*{Prop.~3.7.10(d)}; the factor $e(w_{j}/v)$ arises by how we normalized our valuation).  Therefore, $d(w_{j+1}/w_j) \leq (m+1) e(w_{j+1}/v)$.    It thus suffices to prove that $\ord_v(g(a_{j+1}))$ can be bounded from below by some constant not depending on $j$; this follows immediately from Lemma~\ref{L:ord lower}.
\end{proof}

We finally prove that $d(w/v)/e(w/v)\ll j+1$ holds for every place $v$ of $L$, $j\geq 0$, and place $w$ of $K_{j}$ lying over $v$.   If the place $v$ corresponds to one of the closed points of $X$, then we know that $\rho_\infty$, and hence $K_j$, is unramified at $v$; so $d(w/v)/e(w/v)=0$.     We may now fix $v$ to be a one of the finite many places of $L$ for which $\rho_\infty$ is ramified. 

Fix a positive constant $C$ as in Lemma~\ref{L:exp bound}.   After possibly increasing $C$, we may assume that  $d(w_0/v)\leq C e(w_0/v)$ holds for every place $w_0$ of $K_0$ lying over $v$.     Take any places $w_j$ of $K_j$ for $j\geq 0$ such that $w_{j+1}$ lies over $w_j$ and $w_0$ lies over $v$.    By (\ref{E:diff ram}) and Lemma~\ref{L:exp bound}, we have
\[
\frac{d(w_{j+1}/v)}{e(w_{j+1}/v)} = \frac{d(w_j/v)}{e(w_j/v)} + \frac{d(w_{j+1}/w_j)}{e(w_{j+1}/v)} \leq \frac{d(w_j/v)}{e(w_j/v)}  + C.
\]
Since $d(w_0/v)/e(w_0/v)\leq C$ by our choice of $C$, it is now easy to show by induction on $j$ that ${d(w_j/v)}/{e(w_j/v)}\leq C(j+1)$ holds for all $j\geq 0$.

%\bibliographystyle{plain}
%\bibliography{/Users/zywina/Documents/papers/bib/master}

% \bib, bibdiv, biblist are defined by the amsrefs package.
\begin{bibdiv}
\begin{biblist}

\bib{MR850546}{article}{
      author={Anderson, Greg~W.},
       title={{$t$}-motives},
        date={1986},
     journal={Duke Math. J.},
      volume={53},
      number={2},
       pages={457\ndash 502},
}

\bib{MR2470687}{article}{
      author={Clozel, Laurent},
      author={Harris, Michael},
      author={Taylor, Richard},
       title={Automorphy for some {$l$}-adic lifts of automorphic mod {$l$}
  {G}alois representations},
        date={2008},
     journal={Publ. Math. Inst. Hautes \'Etudes Sci.},
      number={108},
       pages={1\ndash 181},
        note={With Appendix A, summarizing unpublished work of Russ Mann, and
  Appendix B by Marie-France Vign{\'e}ras},
}

\bib{MR1858082}{article}{
      author={David, Chantal},
       title={Frobenius distributions of {D}rinfeld modules of any rank},
        date={2001},
     journal={J. Number Theory},
      volume={90},
      number={2},
       pages={329\ndash 340},
}

\bib{MR1373559}{article}{
      author={David, Chantal},
       title={Average distribution of supersingular {D}rinfel$'$d modules},
        date={1996},
     journal={J. Number Theory},
      volume={56},
      number={2},
       pages={366\ndash 380},
}

\bib{MR0384707}{article}{
      author={Drinfel$'$d, V.~G.},
       title={Elliptic modules},
        date={1974},
     journal={Mat. Sb. (N.S.)},
      volume={94(136)},
       pages={594\ndash 627, 656},
}

\bib{MR0439758}{article}{
      author={Drinfel$'$d, V.~G.},
       title={Elliptic modules. {II}},
        date={1977},
     journal={Mat. Sb. (N.S.)},
      volume={102(144)},
      number={2},
       pages={182\ndash 194, 325},
        note={English translation: Math. USSR-Sb. \textbf{31} (1977),
  159--170},
}

\bib{MR2366959}{article}{
      author={Gekeler, Ernst-Ulrich},
       title={Frobenius distributions of {D}rinfeld modules over finite
  fields},
        date={2008},
     journal={Trans. Amer. Math. Soc.},
      volume={360},
      number={4},
       pages={1695\ndash 1721},
}

\bib{MR1196527}{incollection}{
      author={Goss, David},
       title={{$L$}-series of {$t$}-motives and {D}rinfeld modules},
        date={1992},
   booktitle={The arithmetic of function fields ({C}olumbus, {OH}, 1991)},
      series={Ohio State Univ. Math. Res. Inst. Publ.},
      volume={2},
   publisher={de Gruyter},
     address={Berlin},
       pages={313\ndash 402},
}

\bib{MR563921}{book}{
      author={Gross, Benedict~H.},
       title={Arithmetic on elliptic curves with complex multiplication},
      series={Lecture Notes in Mathematics},
   publisher={Springer},
     address={Berlin},
        date={1980},
      volume={776},
        note={With an appendix by B. Mazur},
}

\bib{MR0330106}{article}{
      author={Hayes, David~R.},
       title={Explicit class field theory for rational function fields},
        date={1974},
     journal={Trans. Amer. Math. Soc.},
      volume={189},
       pages={77\ndash 91},
}

\bib{MR535766}{incollection}{
      author={Hayes, David~R.},
       title={Explicit class field theory in global function fields},
        date={1979},
   booktitle={Studies in algebra and number theory},
      series={Adv. in Math. Suppl. Stud.},
      volume={6},
   publisher={Academic Press},
     address={New York},
       pages={173\ndash 217},
}

\bib{MR1196509}{incollection}{
      author={Hayes, David~R.},
       title={A brief introduction to {D}rinfeld modules},
        date={1992},
   booktitle={The arithmetic of function fields ({C}olumbus, {OH}, 1991)},
      series={Ohio State Univ. Math. Res. Inst. Publ.},
      volume={2},
   publisher={de Gruyter},
     address={Berlin},
       pages={1\ndash 32},
}

\bib{MR2630056}{article}{
      author={Harris, Michael},
      author={Shepherd-Barron, Nick},
      author={Taylor, Richard},
       title={A family of {C}alabi-{Y}au varieties and potential automorphy},
        date={2010},
     journal={Ann. of Math. (2)},
      volume={171},
      number={2},
       pages={779\ndash 813},
}

\bib{MR1298275}{article}{
      author={Murty, V.~Kumar},
      author={Scherk, John},
       title={Effective versions of the {C}hebotarev density theorem for
  function fields},
        date={1994},
     journal={C. R. Acad. Sci. Paris S\'er. I Math.},
      volume={319},
      number={6},
       pages={523\ndash 528},
}

\bib{MR0568299}{book}{
      author={Lang, Serge},
      author={Trotter, Hale},
       title={Frobenius distributions in {${\rm GL}_{2}$}-extensions},
      series={Lecture Notes in Mathematics, Vol. 504},
   publisher={Springer-Verlag},
     address={Berlin},
        date={1976},
        note={Distribution of Frobenius automorphisms in
  ${{\rm{G}}L}_{2}$-extensions of the rational numbers},
}

\bib{Laumon}{book}{
	author={Laumon, G{\'e}rard},
	title={Cohomology of {D}rinfeld modular varieties. {P}art {I}},
        volume={41},
        series={Cambridge Studies in Advanced Mathematics},
	publisher={Cambridge University Press}, 
	place={Cambridge}, 
	date={1996},
}

\bib{MR935007}{article}{
      author={Murty, M.~Ram},
      author={Murty, V.~Kumar},
      author={Saradha, N.},
       title={Modular forms and the {C}hebotarev density theorem},
        date={1988},
     journal={Amer. J. Math.},
      volume={110},
      number={2},
       pages={253\ndash 281},
}

\bib{MR823264}{article}{
      author={Murty, V.~Kumar},
       title={Explicit formulae and the {L}ang-{T}rotter conjecture},
        date={1985},
     journal={Rocky Mountain J. Math.},
      volume={15},
      number={2},
       pages={535\ndash 551},
        note={Number theory (Winnipeg, Man., 1983)},
}

\bib{MR1474696}{article}{
      author={Pink, Richard},
       title={The {M}umford-{T}ate conjecture for {D}rinfeld-modules},
        date={1997},
     journal={Publ. Res. Inst. Math. Sci.},
      volume={33},
      number={3},
       pages={393\ndash 425},
}

\bib{MR1637068}{article}{
      author={Pink, Richard},
       title={Compact subgroups of linear algebraic groups},
        date={1998},
     journal={J. Algebra},
      volume={206},
      number={2},
       pages={438\ndash 504},
}

\bib{MR2499412}{article}{
      author={Pink, Richard},
      author={R{\"u}tsche, Egon},
       title={Adelic openness for {D}rinfeld modules in generic
  characteristic},
        date={2009},
     journal={J. Number Theory},
      volume={129},
      number={4},
       pages={882\ndash 907},
}

\bib{MR0262250}{article}{
      author={Riehm, Carl},
       title={The norm {$1$} group of a {$\mathfrak{p}$}-adic division
  algebra},
        date={1970},
     journal={Amer. J. Math.},
      volume={92},
       pages={499\ndash 523},
}

\bib{MR0387283}{article}{
      author={Serre, Jean-Pierre},
       title={Propri\'et\'es galoisiennes des points d'ordre fini des courbes
  elliptiques},
        date={1972},
     journal={Invent. Math.},
      volume={15},
      number={4},
       pages={259\ndash 331},
}

\bib{MR2644927}{article}{
      author={Stalder, Nicolas},
       title={The semisimplicity conjecture for {$A$}-motives},
        date={2010},
     journal={Compos. Math.},
      volume={146},
      number={3},
       pages={561\ndash 598},
}

\bib{St}{book}{
author={Stichtenoth, Henning},
title={Algebraic function fields and codes},
publisher={Springer-Verlag}, 
place={Berlin}, 
date={1993},
}

\bib{MR2468475}{article}{
      author={Taelman, Lenny},
       title={Artin {$t$}-motifs},
        date={2009},
     journal={J. Number Theory},
      volume={129},
      number={1},
       pages={142\ndash 157},
}

\bib{MR1286009}{article}{
      author={Taguchi, Yuichiro},
       title={The {T}ate conjecture for {$t$}-motives},
        date={1995},
     journal={Proc. Amer. Math. Soc.},
      volume={123},
      number={11},
       pages={3285\ndash 3287},
}

\bib{MR1417504}{article}{
      author={Tamagawa, Akio},
       title={The {T}ate conjecture and the semisimplicity conjecture for
  {$t$}-modules},
        date={1995},
     journal={S\=urikaisekikenky\=usho K\=oky\=uroku},
      number={925},
       pages={89\ndash 94},
        note={Algebraic number theory and arithmetic geometry (Japanese)
  (Kyoto, 1994)},
}

\bib{MR2470688}{article}{
      author={Taylor, Richard},
       title={Automorphy for some {$l$}-adic lifts of automorphic mod {$l$}
  {G}alois representations. {II}},
        date={2008},
     journal={Publ. Math. Inst. Hautes \'Etudes Sci.},
      number={108},
       pages={183\ndash 239},
}

\bib{MR2097499}{article}{
      author={van~der Heiden, Gert-Jan},
       title={Weil pairing for {D}rinfeld modules},
        date={2004},
     journal={Monatsh. Math.},
      volume={143},
      number={2},
       pages={115\ndash 143},
}

\bib{MR2018826}{article}{
      author={Yu, Jiu-Kang},
       title={A {S}ato-{T}ate law for {D}rinfeld modules},
        date={2003},
     journal={Compositio Math.},
      volume={138},
      number={2},
       pages={189\ndash 197},
}

\bib{Zywina-CFT}{article}{
      author={Zywina, David},
      title={Explicit class field theory for global function fields},
      date={2011},
      note={arXiv:1110.3779},
}

\end{biblist}
\end{bibdiv}

\end{document}